\documentclass[a4paper,11pt,english]{amsart}

\usepackage{multirow}
\usepackage[english]{babel}
\usepackage[latin1]{inputenc}
\usepackage[T1]{fontenc}
\usepackage{wasysym}

\usepackage{graphicx}
\usepackage{hyperref}
\usepackage{amssymb}
\usepackage{amsmath}
\usepackage{verbatim,enumerate}
\usepackage{tikz}

\newtheorem{thm}{Theorem}[section]
\newtheorem{lemma}[thm]{Lemma}
\newtheorem{prop}[thm]{Proposition}
\newtheorem{cor}[thm]{Corollary}
\newtheorem{defi}[thm]{Definition}
{\theoremstyle{definition}
\newtheorem{exa}[thm]{Example}
\newtheorem{rem}[thm]{Remark}}
\newcommand{\C}{\mathbb{C}}
\newcommand{\CC}{\mathcal{C}}
\newcommand{\CP}{\mathbb{C}P}
\newcommand{\RP}{\mathbb{R}P}

\renewcommand{\epsilon}{\varepsilon}
\newcommand{\R}{\mathbb{R}}
\newcommand{\Z}{\mathbb{Z}}

\newcommand{\x}{\underline{x}}

\addtolength{\textwidth}{4cm}
\addtolength{\oddsidemargin}{-2cm}
\addtolength{\evensidemargin}{-2cm}

\begin{document}

\title{Surgery of real symplectic  fourfolds and
Welschinger invariants}

\author{Erwan Brugall\'e}
\address{Erwan Brugall\'e, CMLS, École polytechnique, CNRS, Université
  Paris-Saclay, 91128 Palaiseau Cedex, France; Université de Nantes, Laboratoire de
  Mathématiques Jean Leray, 2 rue de la Houssinière, F-44322 Nantes Cedex 3,
France}
\email{erwan.brugalle@math.cnrs.fr}

\subjclass[2010]{Primary 14P05, 14N10; Secondary 14N35, 14P25}
\keywords{Real enumerative geometry, Welschinger invariants,
 symplectic sum}

\begin{abstract}A  surgery of a real symplectic manifold $X_\R$ along a real Lagrangian sphere
$S$ is a modification of the symplectic and real structure on $X_\R$ in a
neighborhood of $S$.  Genus 0 Welschinger invariants of two real symplectic
$4$-manifolds differing by such a  surgery have been related in
\cite{BP14}. In the present paper, we
explore some particular
situations where general formulas from \cite{BP14} greatly simplify. As an
application, we reduce the computation of genus 0 
Welschinger invariants of all del~Pezzo surfaces 
 to the cases covered by \cite{Bru14}, and  of all $\R$-minimal real conic bundles 
to the cases covered by \cite{HorSol12}. As a by-product,
we establish the existence of some new relative Welschinger 
invariants.
We also generalize results from \cite{BP14} to the enumeration of
curves  of higher
genus, and give relations between hypothetical invariants defined in
the same vein as \cite{Shu14}. 
\end{abstract}
\maketitle
\tableofcontents

Let $\Lambda$  be either $\Z$,  $\Z/2\Z$, or
$\mathbb Q$.
Given a (oriented if $\Lambda=\Z$ or $\mathbb Q$) smooth compact manifold  $X$ of
dimension 4, 
the intersection product of two elements
$d_1,d_2\in H_2(X;\Lambda)$ is
denoted by $d_1\cdot d_2\in \Lambda$.
The class realized in  $H_2(X;\Lambda)$ by 
 a $2$-cycle $C$ in  $X$ is denoted by
$[C]$. The subgroup orthogonal to a class $\delta\in H_2(X;\Lambda)$ for the
intersection form is denoted by $\delta^\perp$.

A \textit{real symplectic manifold} $X_\R=(X,\omega_X,\tau_X)$ is a symplectic
manifold $(X,\omega_X)$ equipped with an anti-symplectic involution
$\tau_X$. The \textit{real part} of $(X,\omega_X,\tau_X)$, denoted by
$\R X$, is by
definition the fixed point set of $\tau_X$.
A projective real algebraic variety  is always implicitly assumed to be
equipped with some
Kähler form which turns it into a real symplectic manifold.
Two symplectic forms $\omega_X$ and $\omega_X'$
(resp. two real symplectic structures $(\omega_X,\tau_X)$ and $(\omega_X',\tau_X')$) on a
manifold $X$ 
are said to be \emph{deformation equivalent} if there exists a
smooth family of symplectic forms connecting $\omega_X$ and
$\omega_X'$ (resp. a smooth family of real symplectic structures
connecting  $(\omega_X,\tau_X)$ to $(\omega_X',\tau_X')$). Two
symplectic manifolds $(X,\omega_X)$ and $(Y,\omega_Y)$ 
(resp. two real symplectic manifolds $(X,\omega_X,\tau_X)$ and $(Y,\omega_Y,\tau_Y)$)
are said to be \emph{deformation equivalent} if one can pass from one
to the other by a finite sequence of deformations of symplectic form and
symplectomorphisms (resp.  deformations of real symplectic structure and
equivariant symplectomorphisms).

In this text, the manifold  $X$ will always be $4$-dimensional, and we denote by
$H_2^{\tau_X}(X;\Lambda)$ the space of $\tau_X$-invariant classes, and  by
$H_2^{-\tau_X}(X;\Lambda)$ the space of $\tau_X$-anti-invariant classes.

\section{Introduction}
Beside
blow-up, surgery along  a real Lagrangian sphere
 is a natural  and elementary operation
on  \emph{real} algebraic or symplectic
manifolds.
For example, there exists only  three real rational algebraic
surfaces 
up to deformation, blow-up, and surgery along a real Lagrangian
sphere: $\CP^2$ equipped with its standard real structure, and
$\CP^1\times\CP^1$ equipped with any of its two non-equivalent real
structures with an empty real part\footnote{The classification up to
  deformation and blow-up is 
given in \cite{DegKha02}; the classification up to surgery along a real
Lagrangian sphere follows then from the rigid isotopy classifications
of plane real quartics, of real cubic sections of the quadratic cone
in $\CP^3$, and of real quadrics in $\CP^3$, see for example \cite{DK}.}.
Welschinger invariants are invariant under deformation, hence
understanding how  they
behave under blow-up and
surgery along a real Lagrangian sphere would allow for reducing
significantly the basic ambient real symplectic manifolds in which to
perform actual computations.

  \medskip
In this paper we relate, under mild assumptions,  Welschinger invariants
of two real symplectic  4-manifolds differing by 
such  surgery. 
We start by describing informally this operation,
 and we refer to
Section \ref{sec:real surg} for  precise definitions. 
Let $X_\R=(X,\omega_X,\tau_X)$ be a real compact symplectic manifold of
dimension 4, 
and let $S\subset X$ be a real Lagrangian sphere, 
i.e. a Lagrangian sphere globally invariant under $\tau_X$.
It follows from Weinstein Lagrangian neighborhood Theorem  (see for
example \cite[Theorem 3.33]{Duff})
that  there exists a real open symplectic embedding
of a neighborhood $V$ of $S$ 
to
the real affine quadric $(Q,\omega_Q, \tau)$ in $\C^3$ given
by the equation
$$(-1)^{\epsilon_1} x^2+(-1)^{\epsilon_2} y^2+(-1)^{\epsilon_3} z^2=1
\qquad \mbox{with }\epsilon_i\in\{0,1\},$$
and $S$ to  the sphere $S_Q$ in
$i^{\epsilon_1}\R\times i^{\epsilon_2}\R \times i^{\epsilon_3}\R $
with equation
$$x^2+y^2+z^2=1. $$
Observe that the automorphism $(x,y,z)\mapsto (-x,-y,-z)$ of $\C^3$ provides
another real structure $\tau'$ on  $(Q,\omega_Q)$ that coincide
with $\tau$ at infinity, and for which $S_Q$ remains globally invariant.
As a consequence,
one can modify the symplectic and real structure of  $X_\R$ in $V$ so that $V$ now
becomes equivariantly symplectomorphic to a bounded open subset of the
real affine quadric  in $\C^3$ with equation
$$(-1)^{\epsilon_1} x^2+(-1)^{\epsilon_2} y^2+(-1)^{\epsilon_3} z^2=-1 .$$  
The resulting real symplectic manifold 
$Y_\R$ is called a \emph{surgery of $X_\R$ along $S$}. From
this local description,
we see that
(with the convention that $\chi(\emptyset)=0$)
$$\chi(\R Y)=\chi(\R X)\pm 2,$$
and that the class $[S]$ in $H_2(X;\Z)$ is $\tau_X$-anti-invariant if and
only if it is $\tau_Y$-invariant (in which case we have 
$\chi(\R Y)=\chi(\R X)+ 2$).
Note that  $X_\R$ and $Y_\R$ have the same
underlying smooth
manifold, and that  $Y_\R$ only depends, up to deformation,  on $X_\R$
and on the class
realized by $S$ in $H_2(X;\Z)$. Since we are interested in this paper
in invariants under deformation of real symplectic manifolds, we say
that $Y_\R$ is \emph{the} surgery of $X_\R$ along $S$ rather than
\emph{a} surgery.

The two above real quadrics can be put into the real family $Q_t$ of 
quadrics with equation
$$(-1)^{\epsilon_1} x^2+(-1)^{\epsilon_2} y^2+(-1)^{\epsilon_3} z^2=t   
 \qquad \mbox{with }|t|\le 1.$$  
The quadric $Q_0$ is the unique singular quadric of the family. Hence
$X_\R$ and $Y_\R$
can be 
represented as 
two different real fibers of a real Lefschetz fibration of over a disk, having
a unique singular fiber
for which $S$ realizes precisely the vanishing cycle
(see Figure \ref{fig:degen}).
\begin{figure}[h!]
\begin{center}
\begin{tabular}{ccc}
\includegraphics[width=2cm, angle=0]{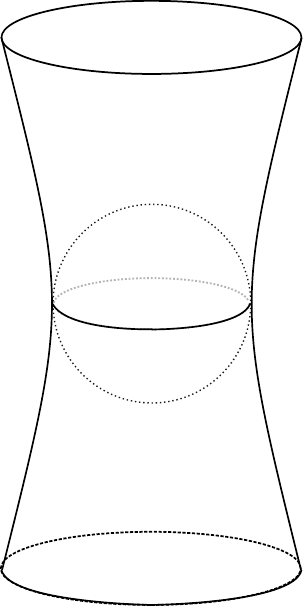}
\hspace{3ex}
\includegraphics[width=2cm, angle=0]{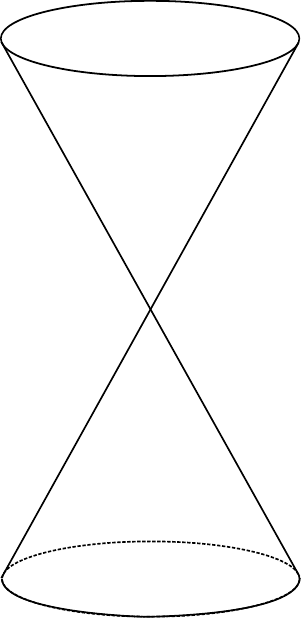}
\hspace{3ex} 
\includegraphics[width=2cm, angle=0]{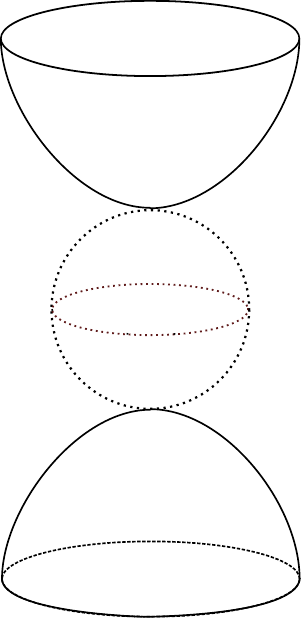}
\put(-195,-15){$\R Q_{1}$}
\put(-195,85){$S_{Q_1}$}
\put(-115,-15){$\R Q_{0}$}
\put(-35,-15){$\R Q_{-1}$}
\put(-5,55){$S_{Q_{-1}}$}
& \hspace{5ex}
&\includegraphics[width=2cm, angle=0]{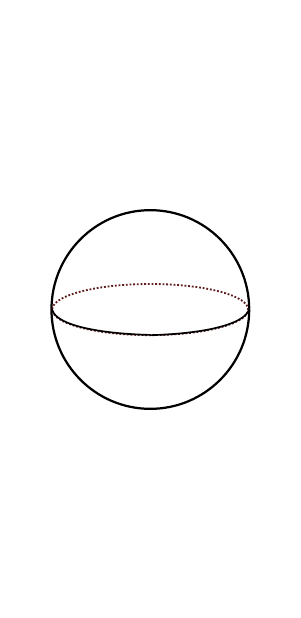}
\hspace{1ex} 
\includegraphics[width=2cm, angle=0]{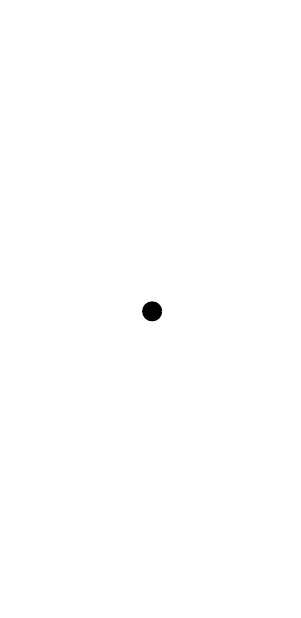}
\hspace{1ex} 
\includegraphics[width=2cm, angle=0]{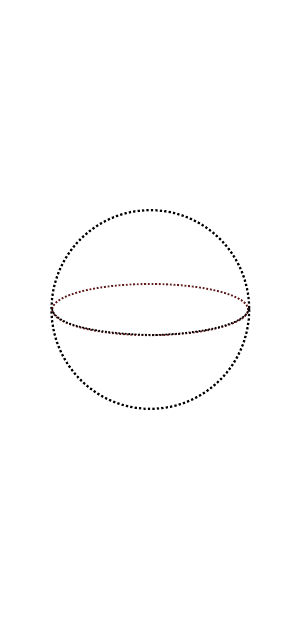}
\put(-185,-15){$\R Q_{1}=S_{Q_1}$}
\put(-100,-15){$\R Q_{0}$}
\put(-50,-15){$\R Q_{-1}=\emptyset$}
\put(-5,55){$S_{Q_{-1}}$}
\\ \\a) $Q_t$ with equation $x^2+ y^2- z^2=t$
&&b) $Q_t$ with equation $x^2+ y^2+ z^2=t$
\end{tabular}
\end{center}
\caption{$Q_t$ with equation $x^2+ y^2\pm z^2=t$.}
\label{fig:degen}
\end{figure}

\medskip
Now we specialize our main statement, Theorem \ref{thm:main1}, for a
particular  type of Welschinger invariants that are easy to
define. Choose the following:
\begin{itemize}
\item a connected component $L$ of $\R X$,
\item $R$ which is  either the empty set or the set
  $\R X\setminus L$;  denote by $F$ the class realized by $R$ in 
 $H_2^{\tau_X}(X\setminus L;\Z/2\Z)$;
\item  a class $d\in H_2^{-\tau_X}(X;\Z)$,
\item   $r,s\in\Z_{\ge 0}$ such that  $c_1(X)\cdot d  - 1= r+2s,$
  \item  a configuration $\x$ made of $r$ points in $L$ and  $s$ 
 pairs of $\tau_X$-conjugated 
 points in $X\setminus \R X$.
\end{itemize}
Given  an  almost complex structure $J$ tamed by $\omega_X$ for
which $\tau_X$ is $J$-antiholomorphic (i.e.
$J\circ d \tau_X=-d\tau_X\circ J$), 
we denote by $\mathcal C(d,\x,L,J)$
 the set of  real rational 
$J$-holomorphic curves $f:\CP^1 \to X$, with $f_*[\CP^1]=d$, passing through $\x$,
 and such that $f(\RP^1)\subset L$. For a generic choice of $J$, 
 the set $\mathcal C(d,\x,L,J)$ is finite
and composed of immersions. Given an element $f:\CP^1\to X$ of $\mathcal
C(d,\x,L,J)$, 
 we
define the
 $(L,F)$-mass $m_{L,F}(f)$ of $f$ as
 the number of  elliptic real  nodes of  $f(\CP^1)$ (i.e.
real nodes with two
$\tau_X$-conjugated branches) contained in $L\cup R$.
The number
$$W_{X_\R,L,F}(d;s) =\sum_{f\in \mathcal C(d,\x,L,J)}(-1)^{m_{L,F}(f)} $$
only depends on the choices of $L$, $F$, $d$,  $s$, and on the
deformation class of $X_\R$ \cite{Wel1,Wel11}, and is called a genus 0
\emph{Welschinger
invariant} of $X_\R$.
Following \cite{IKS11},
one can generalize the
previous definition of Welschinger
invariants to  any class $F\in
H_2^{\tau_X}(X\setminus L;\Z/2\Z)$,
see  Section \ref{sec:abs welsch}. Note nevertheless that among all
 possible choices of $F$ in $H_2^{\tau_X}(X\setminus 
L;\Z/2\Z)$, the two classes $0$ and $[\R X\setminus L]$ seem to 
play a special role,
see Remark \ref{rem:two special F}.
Hence it seems worthwhile to specialize Theorem \ref{thm:main1}
 in these two special cases.

\begin{thm}\label{thm:main1b} 
Let  $X_\R$ be a compact real symplectic manifold of dimension 4. 
Let $S$ be a real Lagrangian sphere in $X_\R$, and  $L$ be 
a connected components of $\R X$  disjoint from
$S$.
We denote by $Y_\R$ the surgery of $X_\R$ along
$S$, and we assume that  $\chi(\R Y)=\chi(\R X)+ 2$.

Then  for any class 
 $d\in H^{-\tau_Y}_2(X;\Z)$, the two following identities hold:
\begin{align*}
 &  W_{Y_\R,L,0}(d;s) = W_{X_\R,L,0}(d;s) + 2\sum_{k\ge
  1}\ W_{X_\R,L,0}(d-k[S];s),
  \\ \mbox{and} \qquad\qquad\qquad& 
  \\ &W_{Y_\R,L,[\R Y\setminus L]}(d;s) = W_{X_\R,L,[\R X\setminus
    L]}(d;s) + 2\sum_{k\ge 1}\ (-1)^k\ W_{X_\R,L,[\R X\setminus
    L]}(d-k[S];s).
\end{align*}
\end{thm}
The case when $F=0$ follows immediately from  Theorem \ref{thm:main1}.
The case when $F=[\R X\setminus L]$ follows  from  Theorem
\ref{thm:main1} combined with
the identity $[\R Y\setminus L]=[\R X\setminus L] +[S]$ in $H_2^{\tau_Y}(X\setminus
L;\Z/2\Z)$.

As mentioned in the beginning, one may reasonably expect that a suitable combination of Theorem
 \ref{thm:main1}  with Solomon's real WDVV equations \cite{HorSol12,Sol1} reduces
the computation of genus 0 Welschinger invariants of all real
rational algebraic
surfaces to the cases of $\CP^2$ and $\CP^1\times\CP^1$. (Recall that thanks to complex WDVV
 equations \cite{KonMan1},
 the computation of genus 0 Gromov-Witten invariants of
 all rational symplectic 4-manifolds can be reduced to  computations
 in $\CP^2$ and $\CP^1\times\CP^1$ \cite{PanGot98,McD90}.)

\begin{rem}\label{rem:better proof}
Although the formulas from Theorem \ref{thm:main1b} are surprisingly simple, our
proof goes by tedious computations involving binomial
coefficients. There might exists a simpler and more transparent (geometrical) proof of
Theorems \ref{thm:main1b} and \ref{thm:main1}. 
\end{rem}
\begin{rem}
  Given a Lagrangian sphere $S$  in a symplectic manifold
  $(X,\omega)$, one can define  a symplectomorphism of $(X,\omega)$
  called a
  \emph{Dehn twist} along $S$, see \cite{Arn95}. If $X$ is four
  dimensional, this automorphism acts on $H_2(X;\Z)$ by the involution
  $d\mapsto d+(d\cdot [S])[S]$.
  Hence similarly to the complex setting, if $S$ is a real Lagrangian
  sphere in a real symplectic fourfold 
  $X_\R$,  one thus obtains the following relation for Welschinger invariants:
\begin{equation}\label{equ:dehn}
W_{X_\R,L,F}(d;s)= W_{X_\R,L,F}(d+(d\cdot [S])[S];s) \qquad\forall d\in H^{-\tau_X}_2(X;\Z).
\end{equation}
(Note that by Lemma \ref{lem:sphere invariant}, this relation can be
non-trivial only when 
$[S]\in H^{-\tau_X}_2(X;\Z)$.) Equation $(\ref{equ:dehn})$ allows to
rewrite the two identities from Theorem \ref{thm:main1b} in a more
symmetric form\footnote{These two formulas also independently appeared in the
February 2017 arXiv version of \cite[Corollary 4.3]{IKS13}.}
\begin{align*}
 &  W_{Y_\R,L,0}(d;s) = \sum_{k\in\Z}\ W_{X_\R,L,0}(d+k[S];s),
  \\ &W_{Y_\R,L,[\R Y\setminus L]}(d;s) = \sum_{k\in\Z}\ (-1)^k\ W_{X_\R,L,[\R X\setminus
      L]}(d+k[S];s),
\end{align*}
which may be useful in the perspective of Remark
\ref{rem:better proof}.
\end{rem}

\begin{rem}
Several formulas involving Welschinger invariants, especially those
based on degeneration formulas like symplectic sum formulas, are 
real counterparts of analogous formulas relating Gromov-Witten
invariants, see for example \cite{Mik1,IKS3,IKS13,Br7,Br6b,Br20,BP14,Bru14,BruGeo15}. This led Göttsche to conjecture
the existence of \emph{quantum enumerative invariants} that would in
particular 
contain both Gromov-Witten and Welschinger invariants as
suitable specializations, see for example  \cite{GotShe12,BlGo14,IteMik13,Mik15}.
Theorems \ref{thm:main1b} and \ref{thm:main1} might have an interpretation in this perspective.
\end{rem}

\medskip
An immediate consequence of Theorem \ref{thm:main1b}
is that
positivity and asymptotic results concerning Welschinger invariants
of $X_\R$
transfer to
$Y_\R$. Particular instances of such positivity and asymptotic results
can be found in \cite{IKS2,IKS10,IKS11,IKS13,Shu14,Br7,Br6b,Br6,Bru14}. 
\begin{cor}
Let $X_\R$, $Y_\R$, $S$, and $L$ be as in Theorem \ref{thm:main1b}. If 
$ W_{X_\R,L,0}(d;0)\ge 0$ for any $d\in H_2^{-\tau_X}(X;\Z/2\Z)$, then
$W_{Y_\R,L,0}(d;0)\ge 0$  for any $d\in H_2^{-\tau_Y}(X;\Z/2\Z)$.

If furthermore for a given $d\in H_2^{-\tau_Y}(X;\Z/2\Z)$, 
the sequence $(W_{X_\R,L,0}(nd;0))_{n\ge 1}$ is
logarithmically asymptotic to the sequence of the corresponding Gromov-Witten
invariants of $(X,\omega_X)$, then so is the sequence  $(W_{Y_\R,L,0}(nd;0))_{n\ge 1}$.
\end{cor}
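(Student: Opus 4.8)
The plan is to read off both statements directly from the clean relation of Theorem \ref{thm:main1}, specialized to $F=0$ and $s=0$:
$$W_{Y_\R,L,0}(d,0) = W_{X_\R,L,0}(d,0) + 2\sum_{k\ge 1} W_{X_\R,L,0}(d-k[S],0).$$
First, for the positivity statement, I would observe that every summand on the right-hand side is one of the invariants $W_{X_\R,L,0}(d-k[S],0)$ assumed nonnegative by hypothesis, and that the coefficients $1$ and $2$ are positive; hence the right-hand side, and therefore $W_{Y_\R,L,0}(d,0)$, is nonnegative. Two bookkeeping points need to be verified here: that each class $d-k[S]$ is genuinely $\tau_X$-anti-invariant, so that the hypothesis applies to it, which is exactly the content recorded in Lemma \ref{lem:2 real class} (together with $[S]\cdot d=0$ from Lemma \ref{lem:sphere invariant}); and that the sum is finite, which holds because $W_{X_\R,L,0}(d-k[S],0)$ vanishes once $d-k[S]$ has nonpositive symplectic area in the deformation where $S$ has become symplectic.

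For the asymptotic statement I would set up a squeeze of $W_{Y_\R,L,0}(nd,0)$ between $W_{X_\R,L,0}(nd,0)$ and the corresponding Gromov-Witten invariant, which I denote $GW_{(X,\omega_X)}(nd)$. Applying the displayed relation to the class $nd$ and using that, under the standing positivity hypothesis, the whole correction sum $2\sum_{k\ge 1}W_{X_\R,L,0}(nd-k[S],0)$ is nonnegative, gives the lower bound $W_{Y_\R,L,0}(nd,0)\ge W_{X_\R,L,0}(nd,0)$. For the upper bound I would invoke the standard fact that a genus $0$ Welschinger invariant never exceeds the corresponding Gromov-Witten invariant, so that $W_{Y_\R,L,0}(nd,0)\le GW_{(Y,\omega_Y)}(nd)$. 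The geometric input that closes the argument is the identification $GW_{(Y,\omega_Y)}(nd)=GW_{(X,\omega_X)}(nd)$: the real surgery alters only the real structure, and the underlying symplectic manifolds $(X,\omega_X)$ and $(Y,\omega_Y)$ are symplectomorphic, being two smooth fibers of the real Lefschetz fibration $Q_t$ of the introduction, hence identified by parallel transport along a path in the $t$-disk avoiding $0$; their complex Gromov-Witten invariants therefore coincide.

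Granting this, I would take logarithms of the chain $W_{X_\R,L,0}(nd,0)\le W_{Y_\R,L,0}(nd,0)\le GW_{(X,\omega_X)}(nd)$, divide by $\log GW_{(X,\omega_X)}(nd)$, which is positive and tends to infinity, and note that the leftmost ratio tends to $1$ by hypothesis while the rightmost equals $1$; the squeeze then forces $\log W_{Y_\R,L,0}(nd,0)/\log GW_{(X,\omega_X)}(nd)\to 1$, which is precisely the claim. I expect the only delicate points to be these two inputs rather than any computation: justifying that $(X,\omega_X)$ and $(Y,\omega_Y)$ carry the same Gromov-Witten invariants, and citing the correct form of the inequality \emph{Welschinger $\le$ Gromov-Witten} for the relevant component set $L$. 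Everything else is the sign of the coefficients in Theorem \ref{thm:main1} together with an elementary squeeze.
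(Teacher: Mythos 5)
Your proof is correct and takes exactly the route the paper intends: the paper states this corollary without any written proof, as an ``immediate consequence'' of Theorem \ref{thm:main1}, and your two steps --- nonnegativity of all coefficients in the surgery formula for the first claim, and the squeeze $W_{X_\R,L,0}(nd,0)\le W_{Y_\R,L,0}(nd,0)\le GW(nd)$ for the second, using the standard bound of Welschinger invariants by Gromov--Witten invariants together with the fact that the real surgery changes only the real structure, so $(X,\omega_X)$ and $(Y,\omega_Y)$ are the same symplectic manifold and their Gromov--Witten invariants literally coincide (no Lefschetz-fibration parallel transport is even needed) --- are precisely the intended ones, including the correct justifications that the sum is finite (vanishing by area in the deformation where $S$ becomes symplectic) and that the lower bound uses the standing positivity hypothesis. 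One minor citation slip: the $\tau_X$-anti-invariance of the classes $d-k[S]$ and the identity $[S]\cdot d=0$ both come from Lemma \ref{lem:sphere invariant} (Lemma \ref{lem:2 real class} concerns only the class $F$, and is vacuous here since $F=0$), but this does not affect the argument.
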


Theorems \ref{thm:main1b} and \ref{thm:main1} have several
applications to the case of real rational algebraic surfaces, see Sections
\ref{sec:rational surf} and \ref{sec:concrete}. In particular,
 combined with results
from \cite{Bru14}, they  complete the computation of genus 0 Welschinger invariants of
all real del Pezzo surfaces.

\bigskip
\noindent{\bf Context and relation to other works.}
Using a real version of the symplectic sum formula (we refer for example to
\cite{IP,LiRu01,EGH} for complex versions, see also\cite{Li02,Li04}
for an analogous formula in the algebraic category), we related 
in \cite{Br20,BP14} genus 0  Welschinger invariants  of two real symplectic $4$-manifolds
$X_\R$ and $Y_\R$
differing by a  surgery along a real Lagrangian sphere $S$ (see also
\cite{IKS13,Shu14} for  related works in the case of algebraic del~Pezzo
surfaces).
In general, relations from
\cite{BP14}
involve some quantities that
depend on some choices additional to the choice of $X_\R$, $Y_\R$, and
$S$. These quantities come from  the enumeration of $J$-holomorphic
curves in a real deformation of either $ X_\R$ of $Y_\R$ 
for which
$S$ becomes symplectic, and with $J$ chosen so that $S$ is
$J$-holomorphic. Since $S$ has self-intersection $-2$, this almost
complex structure $J$ is not generic enough to ensure that counting
real $J$-holomorphic curves with Welschinger signs give rise to an invariant.
Still, relations from \cite{BP14} have been applied in
\cite{BP14,Bru14}
 to obtain qualitative results and explicit computations
 of Welschinger invariants in a number of cases (see also the related
 works \cite{IKS13,Shu14} in the case of algebraic del~Pezzo surfaces of degree at
 least 2).

There are nevertheless particular situations where 
relations  from 
\cite{BP14} simplify so that only enumerative invariants of $X_\R$ and
$Y_\R$ remain.
 Recall 
 that the definition of 
genus 0 Welschinger invariants requires the choice of
 a connected component $L$ of  $\R X$, and of a class $F\in
 H_2^{\tau_X}(X\setminus L;\Z/2\Z)$.
In the case when  $L$ is disjoint from the real Lagrangian sphere $S$,
and  $F$ is orthogonal to $[S]$, then
\cite[Theorem 2.5(1)]{BP14} ultimately only involves  genus 0 Welschinger invariants
of $ X_\R$ and $Y_\R$. Although this is an obvious consequence of the results
exposed in \cite{BP14},
this remark is not explicitly made there (see for example
\cite[Corollaries 4.2 and 4.3]{IKS13} for  explicit similar
remarks in the case of
algebraic del
Pezzo surfaces).

The aim of the present paper is to make explicit and to provide several
applications  of
relations among   Welschinger invariants
of $ X_\R$ and $Y_\R$. 
As a by-product, we obtain the existence of
some new relative Welschinger invariants, see Theorem \ref{thm:relative}.
Note that very few relative invariants are known in real enumerative
geometry so far, see for example \cite{Wel5,Wel4,IKS13,Shu15,IKS16}. 
The existence of absolute Welschinger invariants and 
Theorems \ref{thm:relations 1} and \ref{thm:relations 2}
immediately imply the existence of real
invariants relative to some collections of disjoint real embedded symplectic spheres
with self-intersection $-2$, where only simple and non-fixed
incidences to these spheres are prescribed.
Furthermore we provide in Theorems
\ref{thm:relations 1} and \ref{thm:relations 2}
a mild generalization of
\cite[Theorem 2.5]{BP14} to the case of hypothetical Welschinger invariants in
positive genus defined in the same vein as in \cite{Shu14}.

\medskip
Welschinger first proposed  in  \cite{Wel4} 
an other, nevertheless
related, treatment of  Lagrangian spheres contained in $\R X$.
 Among other results, he proved 
there that   all invariants  $W_{X_\R,S^2,F}$ can be expressed
in terms of some relative Gromov-Witten invariants of the symplectic
manifold $(X,\omega_X)$, and some
real invariants of
$T^*S^2$. 

Several real   enumerative invariants of higher dimensional
symplectic manifolds have
been defined in the last fifteen years, i.e. \cite{Wel2,Wel3,Geor13,GeoZin15}.
It could be interesting to generalize the methods of the present
paper to the study of these invariants.

\bigskip
\noindent{\bf Organization of the paper.}
 In Section \ref{sec:real surg} we
describe in detail  surgeries of real symplectic $4$-manifolds
along real Lagrangian spheres, and give several examples of such
surgeries. 
Absolute and relative 
Welschinger invariants considered in this paper 
are defined in Section \ref{sec:welsch}. We prove there
 Theorems \ref{thm:relations 1}
and \ref{thm:relations 2} that relate such invariants for two  real
symplectic $4$-manifolds differing by a  surgery along a real
Lagrangian sphere, from which we deduce Theorem \ref{thm:main1}.
Qualitative applications of
Section \ref{sec:welsch} to the case of real rational algebraic
surfaces  are discussed in Section 
\ref{sec:rational surf}.
Finally, we illustrate in Section \ref{sec:concrete} 
the use of Theorem \ref{thm:main1} with
concrete computations in the case of real cubic surfaces, $\R$-minimal real
conic bundles, and real del~Pezzo surfaces of degree 1. 

\bigskip
\noindent{\bf Acknowledgment.} I am grateful to
Benoît B.  Bertrand, Nicolas Puignau, as well as to anonymous referees for
their many valuable
comments on earlier versions of this paper, and  to Yanqiao Ding
for pointing me a few misprints. 
I am also indebted to
Jean-Yves Welschinger and Vincent Colin for discussions that helped me to precise
several aspects of the work presented here. This  work is
partially supported by the grant TROPICOUNT of Région Pays de la Loire.

\section{Surgery  along a
 real Lagrangian sphere}\label{sec:real surg}

\subsection{Real structures on a quadric surface}\label{sec:sphere}
Here we recall some well-known facts about projective and affine
quadrics. Any such quadric is always assumed to be equipped with
the symplectic form $\omega_{FS}$ induced by the restriction  of the
Fubini-Study form on $\CP^3$.

A non-singular complex algebraic quadric surface $Q$ in
$\CP^3$ is biholomorphic to $\CP^1\times\CP^1$. In particular the
group $H_2(Q;\Z)$ is isomorphic to $\Z^2$ and generated by the classes
$l_1=[\CP^1\times\{p\}]$ and $l_2=[\{p\}\times\CP^1]$. Clearly, these
two classes are well defined in $H_2(Q;\Z)$ only up to interchanging
$l_1$ and $l_2$. A hyperplane section $E$ of $Q$ realizes the class
$l_1+l_2$. In what follows $E$ is always assumed to be  non-singular,
which implies in particular that it is biholomorphic  to $\CP^1$.
In a suitable coordinate system,
the complement $Q \setminus E$ is given in the corresponding affine
chart of $\CP^3$ by the equation
\[
x^2+y^2+z^2=1.
\]
When in addition both $Q$ and $E$ are real (i.e. stable under the
standard complex conjugation on $\CP^3$),
the  affine quadric $Q \setminus E$ is given by the following equation
in  a suitable real coordinate system
\begin{equation}\label{equ:quad}
  (-1)^{\epsilon_1} x^2+(-1)^{\epsilon_2} y^2+(-1)^{\epsilon_3} z^2=1
  \qquad \mbox{with }\epsilon_i\in\{0,1\}.
  \end{equation}
The trace  $S_Q$ of $Q$ on   
$i^{\epsilon_1}\R\times i^{\epsilon_2}\R \times i^{\epsilon_3}\R $
is  the unit 2-sphere, and is real  Lagrangian in $Q\setminus E$.
Furthermore, it  realizes the class $\pm(l_1-l_2)\in H_2(Q;\Z)$ when endowed with some orientation.
Different choices of $\epsilon_1,\epsilon_2$, and $\epsilon_3$ provide 
four different real structures on the pair
$(Q,E)$, see Figure \ref{fig:real quad}:
\begin{itemize}
\item $\tau_{S^1,0}$: $\R E\ne\emptyset$, and $\R (Q\setminus E)$
  is a one-sheeted hyperboloid;
\item $\tau_{S^1,2}$: $\R E\ne\emptyset$, and $\R (Q\setminus E)$ is a two-sheeted hyperboloid;
\item $\tau_{\emptyset,0}$: $\R E=\emptyset$, and $\R Q=\emptyset$;
\item $\tau_{\emptyset,2}$: $\R E=\emptyset$, and $\R Q=S^2$ is an ellipsoid.
\end{itemize}
\begin{figure}[h!]
\begin{center}
\begin{tabular}{ccccccc}
\includegraphics[width=3cm, angle=0]{Figures/H1_hyp.pdf}
\put(-5, 110){$\R (Q\setminus E)$}
\put(-50, 45){$S_Q$}
& \hspace{8ex} &
\includegraphics[width=3cm, angle=0]{Figures/H1_el2.pdf}
\put(-7, 130){$\R  (Q\setminus E)$}
\put(-7, 40){$\R  (Q\setminus E)$}
\put(-10, 80){$S_Q$}
& \hspace{5ex} &
\includegraphics[width=3cm, angle=0]{Figures/H1_em.pdf}
\put(-50, 45){$S_Q$}
& \hspace{0ex} &
\includegraphics[width=3cm, angle=0]{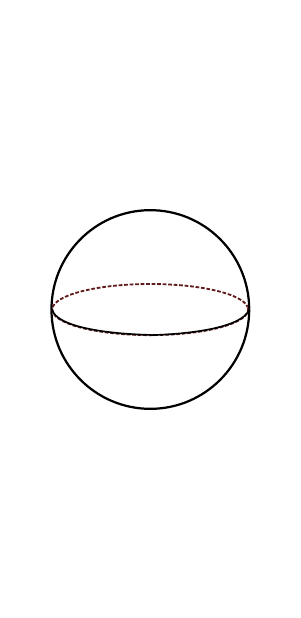}
\put(-65, 45){$\R Q=S_Q$}
\\ \\ $x^2+y^2-z^2=1$ &&  $-x^2-y^2+z^2=1$ &&  $-x^2-y^2-z^2=1$ && $x^2+y^2+z^2=1$ 
\\ \\a) $\tau_{S^1,0}$ && b) $\tau_{S^1,2}$
&& c)
$\tau_{\emptyset,0}$ 
&&d) $\tau_{\emptyset,2}$
\end{tabular}
\end{center}
\caption{Real structures on $(Q,E)$}
\label{fig:real quad}
\end{figure}
For each real structure
$ \tau_{A,a}$ we have
$$\R E=A\qquad\mbox{and}\qquad \chi(\R Q)=a. $$
Note that the two real structures $\tau_{A,0}$ and  $\tau_{A,2}$ on
the pair $(Q,E)$
differ one from the other by the composition with
the automorphism $(x,y,z)\mapsto (-x,-y,-z)$ of $\C^3$, i.e.
the two corresponding equations of the form $(\ref{equ:quad})$ are
obtained one from the other by the change of variable
$(x,y,z)\mapsto (ix,iy,iz)$.

\subsection{Surgery along a real Lagrangian
  sphere}\label{sec:def real surgery}
Let $X_\R$ be a real  symplectic
$4$-manifold containing a real Lagrangian sphere $S$.
Recall that this means that $S$ is globally invariant under $\tau_X$.
It is proved in \cite[Proposition 2.1 and Lemma 2.14]{Teh10} that $X_\R$ is deformation equivalent to
the equivariant symplectic sum 
of  two real symplectic $4$-manifolds $Z_\R=(Z,\omega_Z,\tau_Z)$
and a real quadric $(Q, \omega_{FS},\tau_{A,a})$ along
an embedded symplectic  
sphere $E$ of self-intersection
$-2$ in $Z$ (hence of self-intersection
$2$ in $Q$) where:
\begin{itemize}
\item $Z_\R$ 
  is a symplectic reduction of $X$ with a small neighborhood of $S$ removed;
\item $E$ is a real hyperplane section of $Q$;
\item $S$ is a Lagrangian deformation of a  real Lagrangian sphere
  $S_Q$ in
  $(Q\setminus E,\omega_{FS},\tau_{A,a})$.
\end{itemize}

\begin{rem}
Audin proposed in  \cite{Aud07} an earlier 
non-equivariant version of this construction. 
Welschinger  proposed in \cite{Wel4} an equivalent approach using symplectic
field theory. 
\end{rem}

From an algebraic geometric perspective,
the degeneration of $X_\R$ to the union of $Z_\R$ and
$(Q, \omega_{FS},\tau_{A,a})$ 
 can be thought as a
 degeneration of $X_\R$ to a real nodal 
symplectic manifold for which $[S]$ is precisely the vanishing cycle, 
see Example \ref{ex:gluing real part} below. From this point of view, one
may think of $Z_\R$ as  obtained from $X_\R$ by contracting $S$ to a point and then blowin-up the
resulting ordinary double point.

\begin{defi}
  We say that a real symplectic manifold $Y_\R$ is a
  \emph{surgery of $X_\R$ along the real
    Lagrangian sphere $S$} if it is deformation equivalent to
  the equivariant symplectic sum of
 $Z_\R=(Z,\omega_Z,\tau_Z)$
and $(Q, \omega_{FS},\tau_{A,2-a})$.
\end{defi}
Phrased differently,  the real
manifold $Y_\R$ is obtained from $X_\R$ by changing the symplectic and
real structures
of $X_\R$ in a neighborhood of $S$. Since  all surgeries of $X_\R$ along the real
    Lagrangian sphere $S$ are deformation equivalent, and since we are
    interested in properties that are invariants under deformation, we
    say that $Y_\R$ is \emph{the} surgery of $X_\R$ along the real
    Lagrangian sphere $S$ rather than \emph{a} surgery.
By extension, we will also say that $Y_\R$
  is obtained from $Z_\R$ by a  surgery along $E$.
The notation  $Y_\R \xrightarrow[]{S}  X_\R$ 
means that $X_\R$ and $Y_\R$ are related
by a  surgery along  the real Lagrangian sphere $S$, and that
$\chi(\R Y)=\chi(\R X)+2$.
In this case, the real part $\R Y$ is obtained from $\R X$ by one of the
following topological operations:
\begin{itemize}
\item if $A=S^1$: cut $\R X$ along $\R S=S^1$ and glue a disk to each
  boundary circle (from Figure \ref{fig:real quad}a to  Figure
  \ref{fig:real quad}b);
\item if $A=\emptyset$: the sphere $S$ which contains no real point
  of $\R X$ becomes a connected component of $\R Y$ (from Figure \ref{fig:real quad}c to  Figure
  \ref{fig:real quad}d).
\end{itemize}
Note that both symplectic manifolds $(X,\omega_X)$ and $(Y,\omega_Y)$
are a
deformation of $(Z,\omega_Z)$. 
However the real manifold $X_\R$ is
 a real deformation of $Z_\R$ if and only if 
 $\chi(\R X)=\chi(\R Y)-2$.

\begin{exa}
A real quadric hyperboloid in $\CP^3$ is obtained from a real quadric
ellipsoid by a surgery along a real Lagrangian sphere
intersecting the real part in two points (see Figures \ref{fig:real quad}a and
  \ref{fig:real quad}b).
A real empty quadric in $\CP^3$ is obtained from a real quadric
ellipsoid by a  surgery along the real part (see Figures \ref{fig:real quad}c and
  \ref{fig:real quad}d).
\end{exa}

\begin{exa}\label{ex:gluing real part}
More generally, 
let $D\subset\C$ be a small disk endowed with its standard real structure, 
and let $\pi:\mathcal X \to D$ be a flat real
morphism from a non-singular real algebraic manifold of complex
dimension 3.
 Suppose that the fiber $X_t=\pi^{-1}(t)$ is a
 non-singular  projective algebraic surface when $t\ne 0$, and is 
 a   real  algebraic surface with a single 
non-degenerate double
point $p$ as  only singularity when $t=0$. Suppose in addition that
$\mathcal X$ is locally given at $p$ by the equation
 $$x^2 + y^2\pm z^2=t \qquad (x,y,z,t)\in\C^4,$$
 and that $\pi$ is locally given by 
 $\pi(x,y,z,t)=t$.
Let us perform respectively the  base changes $t\to
 t^2$ and $t\to - t^2$. Then the blow-up at the node of the two obtained families
 realize  the two  symplectic sums
described above. In particular the real algebraic surface $Z_\R$ is simply the blow-up of the
singular surface $X_0$ at the node.
Hence $X_t$ and $X_{-t}$ are obtained one from the other by a
surgery along  a real Lagrangian sphere $S$ realizing
the vanishing cycle of the degeneration of $X_t$ to
$X_0$. 
\end{exa}
\begin{exa}\label{ex:cubic surface}
Let $X_\R$ be a non-singular real cubic surface  in $\CP^3$
with a real part consisting of the disjoint union of a projective
plane $\RP^2$ and a sphere $S$. It is classical that the underlying complex
algebraic surface is the complex projective plane $\CP^2$ blown up at $6$
points in general position (see for example \cite{Dol12}). Note
however that since $\R
X$ has two connected components, the real surface $X_\R$ is not
rational over $\R$, and is not obtained as a  blow-up of the real
projective plane.

By Example \ref{ex:gluing real part},
the surgery of $X_\R$ along $S$ is a real algebraic cubic surface
whose real part is homeomorphic to $\R P^2$, and it is classical that it is  the  blow-up
of  the real projective plane 
 at three pairs of complex conjugated points (see for example \cite{Man86,Kol97,DegKha02}). 
\end{exa}

\begin{lemma}\label{lem:sphere invariant}
Suppose that $Y_\R \xrightarrow[]{S}  X_\R$. Then the class $[S]$ is in $H_2^{-\tau_X}(X;\Z)$ and in
$H_2^{\tau_Y}(X;\Z)$.
Furthermore we have
$$H_2^{-\tau_Y}(X;\Z)\subset [S]^\perp \qquad \mbox{and}\qquad 
 H_2^{-\tau_X}(X;\mathbb Q)=H_2^{-\tau_Y}(X;\mathbb Q)\oplus\mathbb Q[S].$$
\end{lemma}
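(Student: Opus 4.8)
The plan is to prove the three assertions in sequence, deriving the last two (which are the substantial ones) from the way $(\tau_X)_*$ and $(\tau_Y)_*$ act on $H_2(X)$, using only the intersection form as a bookkeeping device.

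\emph{Step 1 (the class $[S]$).} Since $Y_\R\xrightarrow{S}X_\R$, near $S$ the involution $\tau_X$ is modelled on $\tau_{A,0}$ and $\tau_Y$ on $\tau_{A,2}$ (Section~\ref{sec:sphere}, Figure~\ref{fig:real quad}). In the model $\tau_{A,0}$ the restriction $\tau_X|_S$ is orientation-reversing on $S\cong S^2$ (the antipodal map if $A=\emptyset$, a reflection if $A=S^1$), whence $(\tau_X)_*[S]=-[S]$; in the model $\tau_{A,2}$ the restriction $\tau_Y|_S$ is orientation-preserving (the identity if $A=\emptyset$, a half-turn if $A=S^1$), whence $(\tau_Y)_*[S]=[S]$. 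This is exactly the dichotomy recorded in the introduction, and it gives $[S]\in H_2^{-\tau_X}(X;\Z)$ and $[S]\in H_2^{\tau_Y}(X;\Z)$.

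\emph{Step 2 (two structural facts).} First, $\tau_X$ and $\tau_Y$ are anti-symplectic, so they preserve $\omega\wedge\omega$ and hence the orientation of $X$; therefore $(\tau_X)_*$ and $(\tau_Y)_*$ are isometries of the intersection form. Second — the key point — the two maps \emph{agree on $[S]^\perp$ over $\mathbb Q$}. Indeed $\tau_X=\tau_Y$ outside a $\tau$-invariant tubular neighbourhood $V$ of $S$, and $X\setminus S$ retracts onto $X\setminus V$ with $\partial V\cong\RP^3$. As $H_1(\RP^3;\mathbb Q)=H_2(\RP^3;\mathbb Q)=0$, Mayer--Vietoris for $X=(X\setminus S)\cup V$ gives $H_2(X;\mathbb Q)=i_*H_2(X\setminus S;\mathbb Q)+\mathbb Q[S]$. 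For $\alpha\in[S]^\perp$ write $\alpha=i_*\gamma+c[S]$; since $i_*\gamma$ is represented off $S$ we have $i_*\gamma\cdot[S]=0$, so $0=\alpha\cdot[S]=c[S]^2=-2c$ forces $c=0$, and $\alpha$ is represented by a cycle in $X\setminus V$ where $\tau_X$ and $\tau_Y$ coincide; hence $(\tau_X)_*\alpha=(\tau_Y)_*\alpha$. (Conceptually this is because $(\tau_Y)_*=T\circ(\tau_X)_*$ with $T$ the Picard--Lefschetz monodromy of the family $Q_t$ of Example~\ref{ex:gluing real part}, and $T$ is the identity on $[S]^\perp$.)

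\emph{Step 3 (the two relations).} The inclusion $H_2^{-\tau_Y}(X;\Z)\subset[S]^\perp$ is immediate from Step~1 and the isometry property: for $\alpha\in H_2^{-\tau_Y}$,
$$\alpha\cdot[S]=(\tau_Y)_*\alpha\cdot(\tau_Y)_*[S]=(-\alpha)\cdot[S]=-(\alpha\cdot[S]),$$
so $\alpha\cdot[S]=0$. For the rational decomposition, the sum $H_2^{-\tau_Y}(X;\mathbb Q)+\mathbb Q[S]$ is direct because $0\ne[S]\in H_2^{\tau_Y}$. Any $\alpha\in H_2^{-\tau_Y}(X;\mathbb Q)$ lies in $[S]^\perp$ by the first relation, hence $(\tau_X)_*\alpha=(\tau_Y)_*\alpha=-\alpha$ by Step~2, so $\mathbb Q[S]$ and $H_2^{-\tau_Y}(X;\mathbb Q)$ both sit inside $H_2^{-\tau_X}(X;\mathbb Q)$. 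Conversely, given $\beta$ with $(\tau_X)_*\beta=-\beta$, set $\alpha=\beta+\tfrac12(\beta\cdot[S])[S]$; then $\alpha\in[S]^\perp$, $(\tau_X)_*\alpha=-\alpha$ by Step~1, hence $(\tau_Y)_*\alpha=-\alpha$ by the agreement on $[S]^\perp$, i.e. $\alpha\in H_2^{-\tau_Y}$ and $\beta=\alpha-\tfrac12(\beta\cdot[S])[S]$. The factor $\tfrac12$ is precisely why the decomposition is asserted only over $\mathbb Q$.

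\emph{Main obstacle.} Everything reduces to formal manipulation of the intersection form once one has Step~2's claim that $(\tau_X)_*$ and $(\tau_Y)_*$ coincide on $[S]^\perp$. That is the only genuinely geometric input, and it is where I would be most careful: checking that $V$ can be chosen $\tau$-invariant (possible since $S$ is real), that $\partial V\cong\RP^3$ because $[S]^2=-2$, and the rational vanishing of $H_1$ and $H_2$ of $\RP^3$. The Picard--Lefschetz description of the pair $(\tau_X,\tau_Y)$ provides an independent check.
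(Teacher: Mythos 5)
Your proof is correct and takes essentially the same route as the paper: the paper likewise deduces $[S]\in H_2^{-\tau_X}(X;\Z)\cap H_2^{\tau_Y}(X;\Z)$ from the local model in $T^*S$, obtains $H_2^{-\tau_Y}(X;\Z)\subset[S]^\perp$ from the isometry property of $\tau_{Y*}$ (via the decomposition $\gamma=\gamma_\perp+q[S]$ and the computation $2q=-2q$, equivalent to your streamlined version), and derives the rational splitting from the coincidence of $\tau_{X*}$ and $\tau_{Y*}$ on $[S]^\perp$. The only real difference is that the paper asserts this last coincidence without proof, whereas your Step~2 (Mayer--Vietoris with $\partial V\cong\RP^3$ and representing classes of $[S]^\perp$ away from the surgery region) supplies a correct justification of exactly the point you rightly identify as the sole geometric input.
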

\begin{proof}
The first claim follows from the fact that the analogous statement holds for
affine quadrics.
Let $\gamma\in H_2^{-\tau_Y}(X;\Z)$. 
Since $[S]^2=-2$, the sphere $S$ realizes a non-trivial class in 
$H_2(X;\mathbb Q)$ and we have
$$\gamma=\gamma_\perp+q [S] \quad\mbox{in }  H_2(X;\mathbb Q) \quad
\mbox{with } \gamma_\perp\in [S]^\perp \mbox{ and }q\in\mathbb Q.$$
Since $\tau_{Y *}(\gamma)=-\gamma$, we have
$$2q= -q[S]\cdot[S]=-\gamma\cdot [S]=\tau_{Y*}(\gamma)\cdot [S]=q[S]\cdot
[S]=-2q, $$
from which we deduce that $q=0$, that is to say $\gamma\in
[S]^\perp$. Since the restrictions of $\tau_{X*}$ and $\tau_{Y*}$
coincide on $[S]^\perp$, we obtain
that
$H_2^{-\tau_X}(X;\mathbb Q)=H_2^{-\tau_Y}(X;\mathbb Q)\oplus\mathbb Q[S].$
\end{proof}

Since the first Chern class of $X$ is $\tau_X$-anti-invariant for any
real structure $\tau_X$ on
$(X,\omega_X)$, we have in particular 
$$c_1(X)\cdot [S]=0. $$
(this also follows from the adjunction formula.)  

Considering homology with coefficients in $\Z/2\Z$, the previous lemma
can be weakened as follows.
\begin{lemma}\label{lem:2 real class}
Suppose that $Y_\R \xrightarrow[]{S}  X_\R$. Then we have
$$H_2^{\tau_X}(X;\Z/2\Z)\cap [S]^\perp = H_2^{\tau_Y}(X;\Z/2\Z )\cap [S]^\perp. $$
\end{lemma}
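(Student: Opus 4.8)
The plan is to reduce the whole statement to a single transformation formula relating the two homological involutions $\tau_{X*}$ and $\tau_{Y*}$, and then to read the claimed equality off that formula after reducing modulo $2$. First I would record what Lemma \ref{lem:sphere invariant} (and its proof) already provides: the restrictions of $\tau_{X*}$ and $\tau_{Y*}$ agree on $[S]^\perp$, whereas on the line $\mathbb Q[S]$ they differ by a sign, namely $\tau_{X*}[S]=-[S]$ and $\tau_{Y*}[S]=[S]$. Writing an arbitrary class as $\gamma=\gamma_\perp+q[S]$ over $\mathbb Q$ (legitimate since $[S]^2=-2\neq 0$) with $q=\frac{\gamma\cdot[S]}{[S]^2}=-\frac12(\gamma\cdot[S])$, and using that $\tau_{X*}\gamma_\perp=\tau_{Y*}\gamma_\perp$, these two pieces of information combine into the single identity
\[
\tau_{Y*}(\gamma)=\tau_{X*}(\gamma)-(\gamma\cdot[S])\,[S],
\]
valid for every $\gamma\in H_2(X;\mathbb Q)$.

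Next I would upgrade this to an \emph{integral} identity. Rather than tracking possible torsion through the rational computation, I would argue geometrically: since $X_\R$ and $Y_\R$ differ only by a modification of the real structure inside a $\tau$-invariant tubular neighbourhood $V$ of $S$, the involutions $\tau_X$ and $\tau_Y$ coincide on $X\setminus V$ and both preserve the decomposition $X=V\cup(X\setminus V)$. Hence, for a cycle representing $\gamma$, the two chains produced by $\tau_X$ and $\tau_Y$ differ only by a cycle supported in $V$; as $V$ retracts onto $S$ one has $H_2(V;\Z)=\Z[S]$, so $\tau_{Y*}(\gamma)-\tau_{X*}(\gamma)=m(\gamma)[S]$ for some integer $m(\gamma)$. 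Pairing with $[S]$ and using that $\tau_{X*},\tau_{Y*}$ preserve the intersection form (being orientation-preserving diffeomorphisms) together with $\tau_{X*}[S]=-[S]$ and $\tau_{Y*}[S]=[S]$, I get $2(\gamma\cdot[S])=-2\,m(\gamma)$, i.e. $m(\gamma)=-(\gamma\cdot[S])$, which recovers the displayed formula over $\Z$.

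With the integral identity in hand, I would reduce it modulo $2$, where the sign is irrelevant:
\[
\tau_{Y*}(\gamma)=\tau_{X*}(\gamma)+(\gamma\cdot[S])\,[S]\qquad\text{in }H_2(X;\Z/2\Z).
\]
The key observation is that for $\gamma\in[S]^\perp$, that is $\gamma\cdot[S]=0$ in $\Z/2\Z$, the correction term vanishes, so $\tau_{X*}(\gamma)=\tau_{Y*}(\gamma)$. Consequently such a $\gamma$ is $\tau_X$-invariant if and only if it is $\tau_Y$-invariant, and since the membership condition $\gamma\cdot[S]=0$ is common to both sides, the subspaces $H_2^{\tau_X}(X;\Z/2\Z)\cap[S]^\perp$ and $H_2^{\tau_Y}(X;\Z/2\Z)\cap[S]^\perp$ consist of exactly the same classes, which is the assertion. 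The only delicate point I anticipate is the passage from the rational to the mod $2$ statement: reducing the rational formula directly would leave open whether torsion in $H_2(X;\Z)$ interferes, which is precisely why I would insist on the geometric localization in $V$ — where $H_2(V;\Z)=\Z[S]$ is torsion free — to secure the identity over $\Z$ before reducing modulo $2$.
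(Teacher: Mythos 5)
You should know at the outset that the paper offers no proof of this lemma at all --- it is stated as an immediate ``weakening'' of Lemma~\ref{lem:sphere invariant} --- so your attempt has to be judged on its own terms. Your strategy is the natural one, and its first two steps are correct: localizing the difference of the two involutions in a tubular neighbourhood $V$ of $S$ that is invariant under both $\tau_X$ and $\tau_Y$ does give the integral identity $\tau_{Y*}(\gamma)=\tau_{X*}(\gamma)-(\gamma\cdot[S])\,[S]$, and your sign bookkeeping (using $\tau_{X*}[S]=-[S]$, $\tau_{Y*}[S]=[S]$, and invariance of the intersection form --- both involutions preserve orientation since they preserve $\omega_X\wedge\omega_X$) checks out. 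The gap is in the final step. Reducing the integral identity modulo $2$ only establishes the relation on the image of the coefficient map $H_2(X;\Z)\to H_2(X;\Z/2\Z)$, and by universal coefficients this map has cokernel $\mathrm{Tor}(H_1(X;\Z),\Z/2\Z)$, which need not vanish: the lemma is stated for an arbitrary compact real symplectic $4$-manifold, with no hypothesis on $H_1$ (the assumption $b_1(X;\Z/2\Z)=0$ enters the paper only later, and only for $g>0$; think of a real Enriques surface, where $H_1(X;\Z)=\Z/2\Z$). A mod $2$ class in $[S]^\perp$ that does not lift to an integral class is simply not covered by your argument. You correctly sensed that torsion was the delicate point, but you guarded against the wrong failure mode: your geometric localization does secure the identity over $\Z$, while the real problem is the possible non-surjectivity of the reduction of coefficients.

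The repair stays entirely within your method but requires one additional idea. Run the localization directly with $\Z/2\Z$ coefficients: for every $\gamma\in H_2(X;\Z/2\Z)$ you get $\tau_{Y*}(\gamma)+\tau_{X*}(\gamma)=\epsilon(\gamma)\,[S]$ with $\epsilon(\gamma)\in\Z/2\Z$, since $H_2(V;\Z/2\Z)$ is generated by $[S]$. But note that your pairing trick now dies: $[S]^2=-2\equiv 0 \bmod 2$, so intersecting with $[S]$ yields $0=0$ and does not determine $\epsilon$. Instead, it suffices to show $\epsilon$ vanishes on $[S]^\perp$, and this follows from a Mayer--Vietoris argument for $X=V\cup_{\partial V}\left(X\setminus \mathrm{int}\,V\right)$: since $\partial V\cong\RP^3$ has $H_1(\partial V;\Z/2\Z)=\Z/2\Z$, the image of $H_2(V;\Z/2\Z)\oplus H_2(X\setminus \mathrm{int}\,V;\Z/2\Z)$ in $H_2(X;\Z/2\Z)$ has codimension at most $1$; it is contained in $[S]^\perp$, which has codimension exactly $1$ (one has $[S]\neq 0$ in $H_2(X;\Z/2\Z)$ because $[S]^2=-2$ forbids $[S]$ from being twice an integral class modulo torsion, and the mod $2$ intersection form is nondegenerate by Poincar\'e duality), so the two subspaces coincide. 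Hence every class in $[S]^\perp$ is the sum of a multiple of $[S]$ --- on which $\epsilon$ vanishes, since $\tau_{Y*}[S]-\tau_{X*}[S]=2[S]\equiv 0$ --- and a class represented by a cycle disjoint from $V$, on which $\tau_X$ and $\tau_Y$ literally agree at chain level. This gives $\tau_{X*}=\tau_{Y*}$ on all of $[S]^\perp$ with $\Z/2\Z$ coefficients, which is exactly the statement of the lemma.
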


\section{Welschinger invariants}\label{sec:welsch}

In this section we define 
absolute and relative 
Welschinger invariants considered in this paper, and we prove
 Theorems \ref{thm:relations 1}
and \ref{thm:relations 2} that relate such invariants for two  real
symplectic $4$-manifolds differing by a  surgery along a real
Lagrangian sphere. Welschinger invariants of symplectic $4$-manifolds
are up to now only defined in the case of rational curves, nevertheless
Shustin proposed in \cite{Shu14} a partial generalisation to positive genus in the case of
algebraic del Pezzo surfaces. 
Our proof of  Theorems \ref{thm:relations 1}
and \ref{thm:relations 2} extends to enumeration of curves of any
genus, hence we decided to state both theorems for  (hypothetical if $g>0$)
Welschinger invariants of any genus defined in the same vein as in \cite{Shu14}.
The proof from  \cite{Shu14} should be adaptable to the symplectic setting
 in the
 obvious way  using the strategy proposed in \cite{Wel1}
 (including the correction from \cite{Wel11}).
 Doing so would
 nevertheless bring us quite far from our original purposes, so we
 leave the existence of 
 of Welschinger invariants of positive genus
 considered in this text as an hypothesis.

\subsection{Preliminaries}\label{sec:preliminaries}

We start by proving a simple adaptation of 
\cite[Lemma 3.1 and Proposition 3.3]{BP14} that we will use at several
places in the rest of
this section.

Let $(X,\omega_X)$ be a compact symplectic manifold of dimension 4,
containing a finite union $W= E_1\cup \ldots\cup E_\kappa$
of pairwise disjoint
embedded symplectic spheres
with $[E_i]^2=-2$.
Let also 
$J$ be  an almost complex structure on $X$ tamed by $\omega_{X}$
 for which all curves $E_1,\ldots,E_\kappa$ are
$J$-holomorphic. It is classical that such $J$ exists, see for example \cite[Propposition 2.2]{Wen18}.
Given $d \in H_2(X;\Z)$ and $g\in\Z_{\ge 0}$,
let us choose a configuration $\x$ of  $c_1(X)\cdot d+g-1$ distinct
points in $X\setminus \displaystyle \bigcup_{i=1}^\kappa E_i$. 
We define
$\CC^\C(d,g,\x,W,J)$ as the set of 
irreducible $J$-holomorphic curves $f:C\to X$ of
genus $g$, with  $f_*[C]=d$, passing through all points in
$\x$, and whose image is not contained in $\displaystyle \bigcup_{i=1}^\kappa E_i$. 
Such a $J$-holomorphic curve $f:C\to X$ is said to be \emph{nodal} if
all singularities of $f(C)$, if any, are transverse self-intersections.

\begin{lemma}\label{lem:finite relative}
Suppose that $c_1(X)\cdot d>0$ if $g=1$.
Then for a generic choice of $J$ among  almost
complex structure $J$ tamed by $\omega_X$ such that all 
symplectic curves $E_1,\ldots,E_\kappa$  are $J$-holomorphic,
 the set  $\mathcal C^\C(d,g,\x,W,J)$ is finite
and composed of simple maps that are all nodal immersions. 
\end{lemma}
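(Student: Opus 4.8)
The plan is to run the standard Sard--Smale transversality scheme of McDuff--Salamon, but carried out inside the \emph{constrained} space of $\omega_X$-tame almost complex structures that keep the spheres $E_i$ holomorphic; the whole point is that the hypothesis $\mathrm{im}(f)\not\subset\bigcup_i E_i$ is exactly what allows one to perturb $J$ freely where it matters. Concretely, I would fix a reference structure $J_0$ and introduce the Banach manifold $\mathcal J_W^\ell$ of $C^\ell$-smooth $\omega_X$-tame almost complex structures whose restriction to each $E_i$ preserves $TE_i$, so that every $E_i$ stays $J$-holomorphic. Its tangent space consists of the endomorphisms $Y$ with $YJ+JY=0$ whose only constraint is supported on $\bigcup_i E_i$; in particular such $Y$ are \emph{totally unconstrained away from} $\bigcup_i E_i$. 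One checks as usual that $\mathcal J_W^\ell$ is nonempty and a Banach manifold.

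Next I would form the universal moduli space
$$\mathcal M^*=\{(f,J)\ :\ J\in\mathcal J_W^\ell,\ f \text{ simple, } J\text{-holomorphic, genus } g,\ f_*[C]=d,\ \mathrm{im}(f)\not\subset\textstyle\bigcup_i E_i\}$$
and show it is a Banach manifold by verifying that the universal linearisation $(D_f\bar\partial_J,\ Y\mapsto\tfrac12\,Y\circ df\circ j)$ is surjective at every point. Since the Cauchy--Riemann part $D_f\bar\partial_J$ is Fredholm, only its cokernel must be hit by the $J$-variations, and this is the crux of the whole argument. Given a nonzero cokernel element $\eta$ (a solution of the formal adjoint equation), elliptic regularity together with Aronszajn's unique continuation shows that $\eta$ is nonzero on an open dense subset of the domain; because $f$ is simple, its injective points are open and dense, and since $\mathrm{im}(f)\not\subset\bigcup_i E_i$ one can choose an injective point $z_0$ with $f(z_0)\notin\bigcup_i E_i$ and $\eta(z_0)\ne 0$. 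As $\bigcup_i E_i$ is closed, near $f(z_0)$ the defining constraint of $\mathcal J_W^\ell$ is vacuous, so the admissible variations $Y$ are locally as free as in the unconstrained McDuff--Salamon setting; supporting $Y$ in a small neighbourhood of $f(z_0)$ (which $f$ meets only at $z_0$) one arranges $\langle Y\circ df\circ j,\eta\rangle\ne 0$, contradicting $\eta$ lying in the annihilator. Hence the cokernel vanishes and $\mathcal M^*$ is a Banach manifold.

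The remaining steps are routine dimension bookkeeping. Applying Sard--Smale to the Fredholm projection $\mathcal M^*\to\mathcal J_W^\ell$, whose index equals the expected dimension $2(c_1(X)\cdot d+g-1)$ of the moduli of simple genus $g$ class $d$ curves in a $4$-manifold, and then Taubes' trick to pass from $C^\ell$ to $C^\infty$, yields a comeager set of $J$ for which this simple part of the moduli is a smooth manifold of that dimension. Imposing the $c_1(X)\cdot d+g-1$ incidence conditions of $\x$ (each of net real codimension $2$, via an evaluation-map transversality that again uses injective points lying off $\bigcup_i E_i$) cuts it down to a $0$-dimensional manifold. A further transversality argument over the universal locus of maps possessing a critical point, a stratum of strictly positive codimension in dimension $4$, shows that for generic $J$ none of these finitely many curves has a critical point, i.e.\ they are all immersions.

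Finally I would rule out multiply covered elements of $\CC^\C(d,\x,W,J)$. If $f$ were an $m$-fold cover ($m\ge 2$) of a simple curve $h$ of class $d'=d/m$ and genus $g'\le g$ (Riemann--Hurwitz), then $h$ would itself pass through all $c_1(X)\cdot d+g-1$ points. For generic $J$ the simple genus $g'$ curves in class $d'$ through those points form a manifold of dimension
$$2\big((c_1(X)\cdot d'+g'-1)-(c_1(X)\cdot d+g-1)\big)=2\big(-(m-1)\,c_1(X)\cdot d'+(g'-g)\big),$$
which is negative since $c_1(X)\cdot d'=\tfrac1m\,c_1(X)\cdot d>0$ (the inequality $c_1(X)\cdot d>0$ being automatic for $g=0$ from $\#\x\ge 0$, and imposed by hypothesis for $g=1$) and $g'\le g$; hence no such $h$ exists and every curve in $\CC^\C(d,\x,W,J)$ is simple. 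Gromov compactness then makes the $0$-dimensional set of simple curves through $\x$ compact, the same dimension counts precluding any boundary stratum for generic $J$, so it is finite. The main obstacle is the constrained surjectivity of the second step; the freedom to perturb $J$ off $\bigcup_i E_i$ granted by the condition $\mathrm{im}(f)\not\subset\bigcup_i E_i$ is precisely what makes it go through, and everything else is standard.
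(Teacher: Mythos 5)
Your first two steps (the constrained space $\mathcal J_W$ of tame structures keeping the $E_i$ holomorphic, surjectivity of the universal linearisation via unique continuation at an injective point off $\bigcup_i E_i$, Sard--Smale, evaluation transversality, and the codimension-two critical-point stratum for immersedness) are sound and are essentially the standard background the paper takes for granted. The genuine gap is in your finiteness step, where you write that ``the same dimension counts preclude any boundary stratum for generic $J$.'' This is precisely the point where the constrained setting differs from the unconstrained McDuff--Salamon picture, and it is what the lemma is really about: a Gromov limit of a sequence of distinct elements of $\CC^\C(d,\x,W,J)$ may contain components mapped \emph{into} $W$, namely (multiple covers of) the spheres $E_i$. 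These have $c_1(X)\cdot[E_i]=0$ by adjunction, hence strictly negative expected dimension, yet they exist for \emph{every} admissible $J$ because the $E_i$ are $J$-holomorphic by fiat; no genericity argument inside $\mathcal J_W$ can remove these strata. The paper's Step 2 handles exactly this configuration by a non-transversality mechanism: a dimension count (using $c_1(X)\cdot[E]=0$) forces the part of the limit not mapped into $W$ to be a single irreducible genus-$g$ component $\overline C_1$, fixed by the point constraints and meeting $W$ transversely; homologically $\overline f_*[\overline C_1]\cdot[W]=d\cdot[W]+2k$ where $k>0$ is the total covering multiplicity over $W$, and since at most $m'\le k$ of these intersection points can be absorbed by the attached components, at least $d\cdot[W]+k$ of them persist on the nearby curves $f_n$ --- contradicting positivity of intersections, since $f_n$ meets $W$ with total multiplicity $d\cdot[W]$. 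Without an argument of this kind (or some substitute) finiteness is simply not proved; indeed this is the same phenomenon --- embedded $J$-holomorphic $(-2)$-spheres forced on every admissible $J$ --- that the introduction flags as making $J$ ``not generic enough,'' and that required Welschinger's correction cited as \cite{Wel11}.

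There is a second, smaller gap in your multiple-cover exclusion. Your strict negativity of $-(m-1)\,c_1(X)\cdot d'+(g'-g)$ rests on $c_1(X)\cdot d'>0$, which you justify only for $g\le 1$; the lemma is stated for all $g$, and for $g\ge 2$ the hypothesis gives nothing (the constraint $\#\x\ge 0$ only yields $c_1(X)\cdot d\ge 1-g$, which may be nonpositive). Moreover you only use $g'\le g$, which is weaker than Riemann--Hurwitz. The paper's Step 1 instead combines the sharp bound $g-g'\ge (m-1)(g'-1)$ with the point count to deduce $c_1(X)\cdot d'+g'-1\le 0$, which contradicts genericity for the simple underlying curve except in the borderline case where all inequalities are equalities; that case forces an unramified covering, which is then excluded using the hypothesis $c_1(X)\cdot d>0$ when $g=1$. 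Your argument should be repaired along these lines rather than by asserting strict negativity outright. (One point you do get right and should keep: the underlying simple curve $h$ cannot have image inside $W$, since then the image of $f$ would lie in $\bigcup_i E_i$, which is excluded by the definition of $\CC^\C(d,\x,W,J)$, so constrained transversality does apply to $h$.)
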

\begin{proof}
The proof consists in two steps: first we prove that no element of 
$\mathcal C^\C(d,g,\x,W,J)$ factors through a  non-trivial ramified covering, from which
we  deduce the finiteness of $\mathcal C^\C(d,g,\x,W,J)$.
Then all maps in $\mathcal C^\C(d,g,\x,W,J)$
are nodal immersions by
\cite[Corollaries 2.26, 2.30, and 2.32, and Remark 2.17]{Wen18}.
By perturbing $J$ in the complement of a small neighborhood of $W$ if
necessary, we may assume that  for any class $d_0\in H_2(X;\Z)$ and
any $J$-holomorphic simple map $f_0:C_0\to X$ such that
$f_{0 *}[C_0]=d_0$, the curve
$C_0$ has
genus $g_0$, and $\x\subset f(C_0)$, we have
\begin{equation}\label{equ:generic}
c_1(X)\cdot d_0 +g_0 -1 \ge  c_1(X)\cdot d +g -1,
\end{equation}
see for example \cite[Corollary 2.23 and Remark 2.17]{Wen18}.

{\bf Step 1.} Let $f:C\to X$ be an element of $\mathcal C^\C(d,g,\x,W,J)$
that factors through a ramified covering of degree $\delta\ge 2$ 
of a simple map $f_0:C_0\to X$.
Denoting by $g_0$ the genus of $C_0$, we obtain by the Riemann-Hurwitz
formula that
\begin{equation}\label{equ:RH}
g\ge \delta g_0 +1 - \delta.
\end{equation}
Let $d_0\in H_2(X;\Z)$ denotes the class $f_{0 *}[C_0]$.
Since $d=\delta d_0$, Inequality $(\ref{equ:generic})$ becomes
$$(\delta-1)c_1(X)\cdot d_0 +g-g_0\le 0.  $$
Combining this with $(\ref{equ:RH})$, we obtain
$$(\delta-1)(c_1(X)\cdot d_0 +g_0 -1)\le 0, $$
and so 
$$c_1(X)\cdot d_0 +g_0-1\le 0. $$
By \cite[Corollary 2.23 and Remark 2.17]{Wen18}, 
we also have the
opposite inequality,
which alltogether gives 
\begin{equation}\label{equ:factor}
c_1(X)\cdot d_0+g_0 -1= 0. 
\end{equation}
Furthermore, all inequalities above are in fact equalities. In
particular, the covering $C\to C_0$ through which $f$ factors is
non-ramified, which is possible only if $g=g_0=1$. In this case
$(\ref{equ:factor})$ gives $c_1(X)\cdot d=c_1(X)\cdot d_0=0$ which is
excluded by assumption.

\medskip
{\bf Step 2.} 
Suppose that $\CC^\C(d,g,\x,W,J)$ contains infinitely many
simple maps. By Gromov compactness Theorem, there exists a sequence
$(f_n)_{n\ge 0}$ of distinct simple maps in $\CC^\C(d,g,\x,W,J)$ which
converges to some $J$-holomorphic map
$\overline f:\overline C \to X$.
The genericity of $J$ implies that 
the set of simple maps in  $\CC^\C(d,g,\x,W,J)$ is
a $0$-dimensional manifold, and in particular is
discrete (see \cite[Theorem 2.21 and Remark 2.17]{Wen18}). Hence
 either $\overline C$ is reducible, or $\overline f$ is non-simple.
Let 
$\overline C_1,\ldots , \overline C_m, $ $\overline C'_1,\ldots,\overline C'_{m'}$ 
be the irreducible components of $\overline C$, labeled in such a way that
\begin{itemize}
\item  $\overline  f(\overline C_i)\not\subset W$ for any
  $i\in\{1,\cdots , m\}$;
\item $\overline f(\overline C'_i)\subset W$, and $\overline
  f_{|\overline C'_i}$ factors through a ramified covering of
  degree $k_i$, for any
  $i\in\{1,\cdots , m'\}$.
\end{itemize}
Define $k=\sum_{i=1}^{m'}k_i$, and denote by $g_i$ the genus of $C_i$.
The 
restriction of $\overline f$ to
$\displaystyle \bigcup_{i=1}^m\overline C_i$ is subject to $c_1(X)\cdot d +g -1$ 
points conditions, so we have
\begin{equation}\label{equ:reducible}
c_1(X)\cdot d_1+\sum_{i=1}^mg_i  -m\ge c_1(X)\cdot d +g -1,
\end{equation}
where $d_1$ is the class realized by the image of this restriction.
Since an irreducible component $E$ of $W$
 is an embedded sphere with self-intersection $-2$, the adjunction formula implies  that 
$c_1(X)\cdot [E]=0$. Hence we get $c_1(X)\cdot d_1=c_1(X)\cdot
 d$. Combining with $(\ref{equ:reducible})$ we obtain
$$\sum_{i=1}^mg_i -g +1-m\ge 0. $$
If $m=0$, then the image of $\overline f$ must be contained in a curve
in $W$, implying that $c_1(X)\cdot d=0$. Since furthermore in this
case we would have  $c_1(X)\cdot d+g-1=0$, we deduce that $g=1$ contrary to our
assumptions.
Hence $m\ge 1$, and
since $\sum_{i=1}^mg_i \le g$, we deduce that $m=1$ and $g_1=g$. In
particular, each irreducible component $\overline C'_i$ of $\overline
C$ is rational and intersects $\overline C_1$ in a single point.

If $k=m'=0$, then the curve $\overline C$ is irreducible. 
Hence as explained above, the map $\overline f$ has to factorize through 
 a non-trivial ramified covering of a simple
map $f_0:C_0\to X$, which contradicts Step 1.
Hence we have $k>0$.
By genericity of $J$, 
the curve $\overline f(\overline C_1)$ is fixed by the 
$c_1(X)\cdot d+g-1$ point constraints in $X$.
By perturbing $J$ in a neighborhood of $W$ if necessary, we may
assume that   $\overline f(\overline C_1)$ intersects the
curve $W$ transversely. 
Any intersection point of $\overline f(\overline
C_1\setminus (\overline C'_1\cup\ldots\cup \overline C'_{m'}))$ and
 $W$ 
 deforms to
an intersection point of the image of $f_n$ and  $W$ 
for $n>>1$.
Since
$d_1\cdot [W]=d\cdot [W] +2k$ and $m'\le k$, 
at least $d\cdot [W] +k$ intersection points of $\overline f(\overline
C_1)$ and $W$ deform to an intersection point of the image of $f_n$ and $W$
for $n>>1$. But this contradicts the fact that two $J$-holomorphic
curves intersect positively.
\end{proof}

\begin{rem}\label{rem:real g}
Lemma \ref{lem:finite relative} has an obvious equivariant version
when $(X,\omega)$ is equipped with an anti-symplectic involution
$\tau$ for which 
$W$ is $\tau$-anti-invariant. The
proof is by adapting the proof of Lemma \ref{lem:finite relative}
following the proof of  \cite[Theorem 1.10]{Wel1} in the genus 0 case.
\end{rem}

\subsection{Absolute Welschinger invariants}\label{sec:abs welsch}
Recall that if $C$ is an irreducible compact non-singular real algebraic curve of genus
$g$, then the set $\R C$ has at
most $g+1$ connected components by the Harnack-Klein inequality. The
real curve $C$ is called 
\emph{maximal} when equality holds. In this case, the set 
$C\setminus \R C$ has two connected components.
The following lemma is an immediate consequence of \cite[Lemme 3.6.22]{Mang17} and the
Smith exact sequence.
\begin{lemma}\label{lem:class real part}
Let $X_\R$ be a connected real symplectic $4$-manifold with
$b_1(X;\Z/2\Z)=0$ and $\R X\ne\emptyset$,
and
let $\Gamma\in H_2(X,\R X;\Z/2\Z)$. Then the image of
$\partial \Gamma\in H_1(\R X;\Z/2\Z)$ only depends on the class 
$\Gamma+\tau_{X,*}(\Gamma)\in H_2(X;\Z/2\Z)$.
In particular, any class $d\in H_2^{\tau_X}(X;\Z/2\Z)$ induces a class 
$l_{d}\in H_1(\R X;\Z/2\Z)$.
\end{lemma}
Given a connected component $L$ of $\R X$, we denote  by
$l_{L,d}$
the natural projection of the class $l_{d}$
to $H_1(L;\Z/2\Z)$.
  
\medskip
For the rest of this section we fix once for all an integer $g\ge 0$, and
 $X_\R=(X,\omega_X,\tau_X)$  a real compact symplectic manifold of dimension
4. If $g>0$, we furthermore assume that $b_1(X;\Z/2\Z)=0$.

\medskip
Suppose that $\R X$  contains $g+1$ connected components
denoted by 
$L_1,\ldots,L_{g+1}$, and define $L=\displaystyle \bigcup_{i=1}^{g+1}L_i$.
 Note that $\R X$ might contain other connected
components. 

\medskip
We say that 
 an  almost complex structure $J$ tamed by $\omega_X$ is
 $\tau_X$-compatible if $\tau_X$ is $J$-antiholomorphic, i.e. $J\circ
 d \tau_X=-d\tau_X\circ J$.
Recall that there exists a well defined pairing 
$$H_2(X,L;\Z/2\Z)\times H_2(X\setminus L;\Z/2\Z)\to\Z/2\Z$$
given by the intersection product modulo 2.
Let $C$ be a maximal irreducible real algebraic curve,
and  $f:C\to X$ be a real $J$-holomorphic nodal immersion such that $f(\R
C)\subset L$, for some $\tau_X$-compatible almost 
complex structure $J$ on $X$. 
Denoting by $C^+$ the topological closure of one of the halves of $C\setminus\R C$,
and given  $F\in H^{\tau_X}_2(X\setminus L;\Z/2\Z)$,
 we
define the
 $(L,F)$-mass of $f$ as
$$m_{L,F}(f)=m(f)+[f(C^+)]\cdot F, $$
where $m(f)$ is the number of  elliptic real  nodes of  $f(C)$ (i.e.
real nodes with two
 $\tau_X$-conjugated branches) contained in $L$. Note that $m_{L,F}(f)$ does not depend on
the chosen  half of $C\setminus\R C$.

\begin{exa}
If $F=[\R X\setminus L]$,  then $m_{L,F}(f)$ is 
the total number of elliptic real  nodes of  $f(C)$.
\end{exa}

Choose   a class $d\in
H_2^{-\tau_X}(X;\Z)$,
and  $\underline r=(r_1,\ldots r_{g+1})\in\Z_{\ge 0}^{g+1}$ and $s\in\Z_{\ge 0}$ such that  
\[
c_1(X)\cdot d +g - 1= \sum_{i=1}^{g+1}r_i+2s.
\]
We furthermore assume that the following holds
\begin{equation}\label{equ:parity cc}
\mbox{either}\qquad g=0\qquad \mbox{or}\qquad
r_i= l_{L_i,d}^2+1 \mod 2\qquad \forall i\in\{1,\ldots, g+1\}.
\end{equation}
Choose a configuration $\x$ made of
$r_i$ points on each $L_i$ for $i \in\{1,\ldots, g+1\}$, and
$s$ 
 pairs of $\tau_X$-conjugated 
 points in $X\setminus \R X$.
Given a   $\tau_X$-compatible almost complex structure $J$,
we denote by $\mathcal C(d,\x,L,J)$
 the set of  irreducible real 
$J$-holomorphic curves $f:C\to X$ of genus 
$g$ in $X$, with $f_*[C]=d$, passing through $\x$,
 and such that $f(\R C)\subset L$.
It follows from $(\ref{equ:parity cc})$ that given $f:C\to X\in\mathcal
C(d,\x,L,J)$,  each 
component $L_i$ contains a connected
component of $f(\R C)$, and so $C$ is a maximal real curve.
Furthermore according to Lemma \ref{lem:finite relative}, if
$c_1(X)\cdot d>0$ when $g=1$, then the set $\mathcal C(d,\x,L,J)$ is finite
and composed of nodal immersions for a choice of $J$ that is generic
with respect to all choices made above. 

\begin{defi}\label{defi:absolute W} 
  Let
  $F \in H^{\tau_X}_2(X\setminus L;\Z/2\Z)$. We say that
  \emph{Welschinger invariants} exist for the triple
  $(X_\R,L,F)$ if the integer
$$W_{X_\R,L,F}(d;\underline r,s)= \sum_{C\in\mathcal C(d,\x,L,J)}(-1)^{m_{L,F}(C)} $$
 depends neither on $\x$, $J$, nor on the deformation class of
$X_\R$ as soon as $c_1(X)\cdot d\ne 0$ when $g=1$.
\end{defi}

Note that the notation 
$W_{X_\R,L,F}(d;\underline r,s)$ contains the information about the
 genus $g$ of the curves under enumeration: it is the number of connected
 components of $L$ minus 1. When $L$ is connected (i.e.  $g=0$)  we simply denote
 $W_{X_\R,L,F}(d;s)$ rather than
 $W_{X_\R,L,F}(d;\underline  r,s)$. In this case,
 Welschinger invariants always exist.
\begin{thm}[\cite{Wel1,Wel11,IKS14}]\label{thm:absolute W} 
Let $X_\R$ be a   real compact symplectic manifold of dimension
4 with $\R X\ne \emptyset$. Then  Welschinger invariants exist for \emph{any} triple
 $(X_\R,L,F)$ with  $L$ a connected component of $\R X$  and
 $F \in H^{\tau_X}_2(X\setminus
L;\Z/2\Z)$. 
\end{thm}
 These invariants  were first defined and shown
to exist by Welschinger in \cite{Wel1} for rational curves and 
when $F=[\R X\setminus L]$ (see also the
correction from \cite{Wel11} concerning the appearance of embedded
$J$-holomorphic spheres with self-intersection $-2$ in the proof of
\cite[Theorem 0.1]{Wel1}). Welschinger's seminal work has been
generalizsed
by Itenberg, Kharlamov and Shustin to any $F$ in
\cite{IKS14}, and by Shustin to any $g$ in \cite{Shu14}
for real algebraic del~Pezzo surfaces.
 Thanks to Lemma \ref{lem:finite relative}, it should be possible to adapt in the
obvious way the proof
of \cite{Shu14} in the strategy proposed in \cite{Wel1} (including the
correction from \cite{Wel11}) in order to prove the existence
of Welschinger invariants  for any triple
$(X_\R,L,F)$ (i.e. for curves of higher genus):
 the assumption on $d$  prevent the appearance of non-trivial real ramified
 coverings, while condition $(\ref{equ:parity cc})$ should prevent
the appearance
of real immersions that should be counted with multiplicity two or more,
 along 
a generic path of $\tau_X$-compatible almost complex
structures. Nevertheless, there is a certain amount of technical
details to check that this is indeed the case, which is not the
purpose of this paper.

\begin{rem}\label{rem:two special F}
Among all possible choices of $F$ in $H_2^{\tau_Y}(X\setminus
L;\Z/2\Z)$, the two classes $0$ and $[\R X\setminus L]$ seem to 
play a special role, at least in genus $0$.
In this case, any Welschinger invariant $W_{X_\R,L,F}(d;s)$  either
vanishes or is equal in absolute value to
 $W_{X_\R,L,0}(d;s)$ as soon as $c_1(X)\cdot d-1-2s\ge 2$ and $X_\R$ is
deformation equivalent to a real rational algebraic surface, see
\cite{BP14}.  On the other hand, the invariant $ W_{X_\R,L,[\R
    X\setminus L]}(d;s)$ turns out to be sharp in many situation when 
$c_1(X)\cdot d-1-2s\le 1$, see \cite{Wel4}. Furthermore, we do not
know yet any situation where  $W_{X_\R,L,[\R X\setminus L]}(d;s)$, with
$c_1(X)\cdot d-1-2s\le 1$,
is not
 maximal in absolute value when $F$ ranges over  $H_2^{\tau_Y}(X\setminus
L;\Z/2\Z)$.
\end{rem}

Next theorem, the main result of this paper, 
provides surprisingly very simple relations among Welschinger
invariants of real symplectic $4$-manifolds differing by a special
kind of  surgery. Its proof involves relative Welschinger
invariants defined in next section, and is postponed until Section \ref{sec:proof main}.
\begin{thm}\label{thm:main1}
Let  $X_\R$ be a compact real symplectic manifold of dimension 4, and
let $S$ be a real Lagrangian sphere in $X_\R$, endowed with some
orientation, realizing a 
$\tau_X$-anti-invariant class 
in $H_2(X;\Z)$. We denote by $Y_\R$ the surgery of $X_\R$ along
$S$.

Let also $L$ be 
the union of some
connected components of $\R X$ that is disjoint from
$S$, and 
$F\in H_2^{\tau_Y}(X\setminus L;\Z/2\Z)$ be a class  orthogonal to
$[S]$ such that 
Welschinger invariants exist for both triples $(X_\R,L,F)$ and $(Y_\R,L,F)$.
Then
for 
any  class $d\in H^{-\tau_Y}_2(X;\Z)$, we have
$$ W_{Y_\R,L,F}(d;\underline r,s) = W_{X_\R,L,F}(d;\underline r,s) +
2\sum_{k\ge 1}\
W_{X_\R,L,F}(d-k[S];\underline r,s)$$
whenever $\underline r$ and $s$ are such that
the invariant $ W_{Y_\R,L,F}(d;\underline r,s)$ is defined.
\end{thm}
The fact that $F$ is orthogonal to $[S]$ in $ H_2(X;\Z/2\Z)$
ensures that all invariants $W_{X_\R,L,F}(d-k[S];\underline r,s)$ are also
 defined as soon as $ W_{Y_\R,L,F}(d;\underline r,s)$ is defined (which is
always the case if $g=0$), see Lemma \ref{lem:2 real class}.

\subsection{Relative invariants}\label{sec:relative}
As above, we choose $d\in
H_2^{-\tau_X}(X;\Z)$,
and  $\underline r=(r_1,\cdots,r_{g+1})\in \Z_{\ge 0}^{g+1}$ and $s\in\Z_{\ge 0}$ such that  
$$c_1(X)\cdot d +g - 1 =  \sum_{i=1}^{g+1}r_i+2s.$$
Let also $U=\{E_1,\ldots E_{\kappa}\}$ and $V=\{E'_1,\ldots E'_\lambda\}$ be two
finite (maybe empty) sets of  real embedded
 symplectic spheres in $X_\R$ such that for all
 $i, j\in\{1,\ldots,\kappa\}$ and $u, v\in\{1,\ldots,\lambda\}$, we have:
\begin{itemize}
\item $[E_i]^2=[E_u']^2=-2$,
\item $ E_i\cap E_j=E'_u\cap E'_v=E_i\cap E'_u=\emptyset$ if $i\neq j$
  and $u\neq v$,
\item $\R E'_i\ne\emptyset$,
\item $d\cdot[E'_u]=0$,
\item $\sum_{u=1}^\lambda [\R E_u']=0\in H_1(\R X;\Z/2\Z)$.
\end{itemize}

Let $L_1,\ldots, L_{g+1}$ be the topological closure of $g+1$ connected components of 
$\displaystyle \R X\setminus\bigcup_{u=1}^\lambda\R E'_u$ such that
$$L_i\cap L_j=\emptyset \quad\mbox{if }i\ne j \qquad\mbox{and}\qquad\left(\bigcup_{i=1}^{g+1}L_i\right)\cap
\left(\bigcup_{i=1}^{\kappa}\R E_i\right)=\emptyset. $$
Denote $\displaystyle L=\bigcup_{i=1}^{g+1}L_i$.
Let also $L_0$ be the topological closure of the union of some connected
components of $\displaystyle \R X\setminus\bigcup_{u=1}^\lambda\R E'_u$ such that
$$L_0\cap L=\emptyset\qquad \mbox{and}
\qquad \partial (L_0 \cup  L) = \bigcup_{u=1}^\lambda\R E_u'.$$
In particular, the last condition implies that each circle $\R E'_v$
is contained in the
boundary of a connected component of $L_0\cup L$ and of a connected component of  
$\R X\setminus\left(L_0\cup L\right).$
Given $C$  a real symplectic curve and 
$f:C\to X$ a real symplectic immersion, we denote by $l_{L_i, f_*[C]}$
the image of $l_{\R X, f_*[C]}$ by the natural map
$H_1(\R X;\Z/2\Z)\to H_1(L_i,\partial L_i;\Z/2\Z) $.
We still assume that $(\ref{equ:parity cc})$ holds.

Choose a configuration $\x$ made of $r_i$ points on each $L_i$ for
$i\in\{1,\cdots,g+1\}$,  and  $s$ 
 pairs of $\tau_X$-conjugated 
 points in $X\setminus \R X$.
Given a   $\tau_X$-compatible almost complex structure $J$ such that all
symplectic curves in $U\cup V$ are $J$-holomorphic,
we denote by $\mathcal C(d,\x,L,U,V,J)$
 the set of irreducible real 
$J$-holomorphic curves $f:C\to X$  in $X$ of genus 
$g$, with $f_*[C]=d$, whose image is not contained in $U\cup
 V$,
 passing through $\x$,
 and such that $f(\R C)\subset L$.
Given $f:C\to X\in\mathcal
C(d,\x,L,U,V,J)$,  condition $(\ref{equ:parity cc})$ forces each 
component $L_i$ to contain a connected
component of $f(\R C)$, and so $C$ is a maximal real curve. Furthermore according to Lemma \ref{lem:finite relative}, if
$c_1(X)\cdot d>0$ in the case $g=1$, then the set $\mathcal C(d,\x,L,U,V,J)$ is finite
and composed of simple nodal immersions for 
 a choice of $J$ that is generic
with respect to all choices made above.

Let  $f:C\to X$ be an element of $\mathcal C(d,\x,L,U,V,J)$, and
 choose $C^+$ to be the topological closure of one of the halves of $C\setminus\R C$.
As in the case of absolute invariants, 
given $F\in H^{\tau_X}_2(X\setminus L;\Z/2\Z)$
 we
define the
 $(L\cup L_0,F)$-mass of $f$ as
$$m_{L\cup L_0,F}(f)=m(f)+[f(C^+)]\cdot F, $$
where $m(f)$ is the number of real elliptic 
nodes of  $f(C)$ in $L\cup L_0$. Again, $m_{L\cup L_0,F}(f)$ does not depend on
the chosen  half of $C\setminus\R C$.

Let $\overline X_\R$ be the successive real surgeries of $X_\R$
along the curves $E'_1,\ldots,E'_\lambda$, and 
$\overline L$ and $\overline L_0$  be the union of the connected components  of
$\R \overline X$ obtained by gluing disks respectively to the boundary of $L$
and $L_0$. 
Next theorem is a consequence of Corollary \ref{cor:reduction} and
Theorem \ref{thm:relations 2} that we prove in next sections.
\begin{thm}\label{thm:relative}
Let $F\in H^{\tau_X}_2(X\setminus L;\Z/2\Z)$
 orthogonal in $H_2(X;\Z/2\Z)$ 
to all classes realized by
the curves in $V$. If Welschinger invariants exist for
the triple $(\overline X_\R,\overline L,F+[\overline L_0])$,
then the
integer  
$$W^{U,V}_{X_\R,L,L_0,F}(d;\underline r,s)=\sum_{C\in\mathcal C(d,\x,L,U,V,J)}(-1)^{m_{L\cup L_0,F}(C)} $$
depends neither on $\x$, $J$, nor on the deformation class of
the 5-tuple $(X_\R,L,L_0,U,V)$.
\end{thm}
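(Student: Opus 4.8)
The plan is to deduce invariance of the relative count from the invariance of the absolute Welschinger invariants established in Theorem \ref{thm:absolute W}, by relating the two through a real surgery along the spheres of $V$. Each $E'_u\in V$ is a real embedded symplectic sphere with $[E'_u]^2=-2$ and $\R E'_u\ne\emptyset$, to which the surgery construction of Section \ref{sec:real surg} applies via the symplectic sum description recalled in Example \ref{ex:gluing real part}. Performing the real surgery of $X_\R$ along $E'_1,\ldots,E'_\lambda$ produces a real symplectic manifold $Y_\R$ whose real part $\R Y$ is obtained from $\R X$ by cutting along each circle $\R E'_u$ and gluing two disks. Since by hypothesis $L\cup L_0$ has boundary exactly $\bigcup_u\R E'_u$, this region closes up into a union $\widehat L$ of connected components of $\R Y$, and the class $F$, being orthogonal to every $[E'_u]$, descends by Lemma \ref{lem:2 real class} to a class on $Y$. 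I would first record that, for a generic $\tau_X$-compatible $J$ making all curves of $U\cup V$ holomorphic, Lemma \ref{lem:finite relative} guarantees that $\CC(d,\x,L,U,V,J)$ is finite and consists of simple immersions, so the signed count is at least well defined for each admissible choice.

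The heart of the argument is a degeneration (real symplectic sum) formula expressing $W^{U,V}_{X_\R,L,L_0,F}(d,s)$ in terms of absolute Welschinger invariants of $Y_\R$; this is the role played by Theorem \ref{thm:relations 2}. The hypotheses on $V$ are precisely what make such a formula available. The condition $d\cdot[E'_u]=0$, together with positivity of intersections for $J$-holomorphic curves, forces every curve of $\CC(d,\x,L,U,V,J)$ to be disjoint from each $E'_u$; such curves therefore survive intact in the common complement $X\setminus\bigcup_u E'_u=Y\setminus\bigcup_u E'_u$, where $J$ may be kept fixed. The homological condition $\sum_u[\R E'_u]=0$ in $H_1(\R X;\Z/2\Z)$ ensures that the surgery is globally consistent and that the splitting of $\R X$ into $L\cup L_0$ and its complement is compatible with the gluing of disks. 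Finally $F\cdot[E'_u]=0$ guarantees that the term $[f(C^+)]\cdot F$ of the mass is unchanged when curves are transported across the neck, so that the relative mass $m_{L\cup L_0,F}$ on $X_\R$ matches the corresponding absolute mass on $Y_\R$.

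Granting the degeneration formula, I would then invert it. Because its contributions are indexed by classes of the form $d-k[E'_u]$, it has a triangular, degree-filtered shape and can be solved, as in Corollary \ref{cor:reduction}, to express $W^{U,V}_{X_\R,L,L_0,F}(d,s)$ as an explicit $\Z$-linear combination of absolute Welschinger invariants of $Y_\R$. Each such absolute invariant is independent of $\x$, $J$, and of the deformation class of $Y_\R$ by Theorem \ref{thm:absolute W}, and $Y_\R$ is determined up to deformation by the deformation class of the $5$-tuple $(X_\R,L,L_0,U,V)$; hence the relative count inherits all three independence statements. A recursive application, removing the spheres of $V$ one at a time, reduces the general case to a single surgery and closes the induction. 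The spheres of $U$ play an auxiliary role throughout: they carry no intersection constraint with $d$ but are kept $J$-holomorphic purely so that Lemma \ref{lem:finite relative} applies uniformly along generic paths, while the exclusion of curves with image in $U\cup V$ discards the only non-transverse contributions.

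The main obstacle is the degeneration analysis itself, that is, the proof of the underlying relation recorded as Theorem \ref{thm:relations 2}. Since each $E'_u$ has self-intersection $-2$, the almost complex structures forced to keep $U\cup V$ holomorphic are never generic, so one cannot directly invoke transversality; control of the limit configurations in which components sink into the $-2$-spheres must instead be extracted from the dimension count of Lemma \ref{lem:finite relative}. Identifying exactly which broken configurations contribute, and with which multiplicities and Welschinger signs, is the delicate point, and is what makes the resulting combinatorial coefficients (and their inversion in Corollary \ref{cor:reduction}) nontrivial.
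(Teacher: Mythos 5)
Your overall strategy is the paper's: reduce the relative count to absolute Welschinger invariants by successively eliminating the spheres of $U$ and $V$, then invoke Theorem \ref{thm:absolute W}; the paper's proof is exactly an induction using Corollary \ref{cor:reduction} and Theorem \ref{thm:relations 2}. However, the wiring of your middle step is wrong in a way that leaves a genuine gap. Theorem \ref{thm:relations 2} --- the real surgery along a sphere $E\in V$ --- is a plain equality $W^{U,\widehat V}_{Y_\R,\widehat L,\widehat L_0,F}(d,s)=W^{U,V}_{X_\R,L,L_0,F}(d,s)$ with a single term on each side (your own positivity-of-intersections remark, using $d\cdot[E'_u]=0$, explains why no lower classes $d-k[E'_u]$ can contribute); there is nothing to invert there. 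The triangular system and its inversion in Corollary \ref{cor:reduction} belong instead to the first part of Theorem \ref{thm:relations 1}, i.e. to removing a sphere $E$ from $U$: for such a sphere $d\cdot[E]$ need not vanish, curves genuinely meet $E$, and comparing the count for generic $J$ with the count for $J$ keeping $E$ holomorphic produces the sum $\sum_{k\ge 0}\binom{\frac{1}{2}d\cdot[E]+2k}{k}\,W^{U,V}_{X_\R,L,L_0,F}(d-2k[E],s)$, which is what gets inverted.

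Because you attach the inversion to the wrong formula, you are led to assert that the spheres of $U$ ``play an auxiliary role throughout'' and are never removed. That breaks the reduction: after surgering away all of $V$ you are left with $W^{U,\emptyset}_{Y_\R,\widehat L,\widehat L_0,F}(d,s)$, which is still a count taken with respect to an almost complex structure constrained to keep the $-2$-spheres of $U$ holomorphic, hence \emph{not} an absolute invariant, and Theorem \ref{thm:absolute W} cannot be invoked. The missing step is precisely the recursive application of Corollary \ref{cor:reduction} to strip $U$ down to $\emptyset$; only then does one reach $W^{\emptyset,\emptyset}_{\cdot,\widehat L,\widehat L_0,F}(d,s)$, which the paper identifies with the absolute invariant $W_{\cdot,\widehat L,F+[\widehat L_0]}(d,s)$ --- note also the shift of the class by $[\widehat L_0]$, needed because the relative mass counts elliptic nodes lying in $L_0$ as well, a point your ``masses match'' sentence glosses over. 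With these two corrections (inversion applied to $U$-removal rather than to the surgery, plus the $F+[\widehat L_0]$ identification at the base case) your argument becomes the paper's proof.
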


When the conclusion of Theorem \ref{thm:relative} holds, we call
the numbers $W^{U,V}_{X_\R,L,L_0,F}$  \emph{Welschinger invariants of $X_\R$
   relative to the pair $(U,V)$}.

\subsection{Surgery and enumerative geometry}\label{sec:surgery
  and enumerative}
We keep using notations and choices introduced in Section
\ref{sec:relative}. 

\subsubsection{}
Suppose  that $U\ne\emptyset$, and let $E\in U$. We denote by
$\widehat U=U\setminus{E}$, and by $Y_\R$ the surgery of $X_\R$
along $E$. 
If $\R E\ne \emptyset$, recall that $\R Y$ is obtained topologically by cutting $\R X$
along $\R E$ and gluing back a disk along each boundary circle. 
If $\R E \cap L_0\ne\emptyset$, then we denote by $\widehat L_0$ the
union of $L_0$ with the two glued disks. Otherwise we set $\widehat L_0=L_0$.
Next theorem is proved in Section \ref{sec:relations}.

\begin{thm}\label{thm:relations 1}
We have
\begin{equation}\label{equ:relation 1}
 W^{\widehat U,V}_{X_\R,L,L_0,F}(d;\underline r,s) = \sum_{k\ge 0}\binom{\frac{1}2 d\cdot [E]
  +2k}{k} \ W^{U,V}_{X_\R,L,L_0,F}(d-2k[E];\underline r,s).
\end{equation}
If furthermore $d\cdot [E]=0$, and $F\in[E]^\perp$ in $H_2(X;\Z/2\Z)$, then
$$ W^{\widehat U,V}_{Y_\R,L,\widehat L_0,F}(d;\underline r,s) = \sum_{k\ge 0}2^k \ W^{U,V}_{X_\R,L,L_0,F}(d-k[E];\underline r,s).$$
\end{thm}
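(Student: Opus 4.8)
The plan is to prove both identities by a single mechanism: force $E$ to be $J$-holomorphic and analyze, via Gromov compactness, how the curves counted by $W^{\widehat U,V}$ break once $E$ is holomorphic. Recall that $W^{\widehat U,V}_{X_\R,L,L_0,F}(d,s)$ is computed with a generic $\widehat U\cup V$-compatible $J'$ for which $E$ is \emph{not} $J'$-holomorphic, while $W^{U,V}_{X_\R,L,L_0,F}$ uses a generic $U\cup V$-compatible $J$, so $E$ is $J$-holomorphic. First I would join these by a generic path $(J_t)$ of $\tau_X$-compatible structures, all making $\widehat U\cup V$ holomorphic, with $E$ becoming holomorphic exactly at the endpoint. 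The signed count of $\mathcal C(d,\x,L,\widehat U,V,J_t)$ is locally constant away from the wall where $E$ turns holomorphic; at that wall Gromov compactness produces limit stable maps consisting of a \emph{main component} (a simple immersion of some class $d_0$ not covering $E$, hence contributing to $W^{U,V}(d_0)$ by Lemma \ref{lem:finite relative}) together with components mapping onto $E$ and attached to the main component at some of the $d_0\cdot[E]$ points where it meets $E$.

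For the first identity I would work inside $X_\R$. By Lemma \ref{lem:sphere invariant} the class $[E]=[S]$ is $\tau_X$-anti-invariant, so there is no nonzero real curve in a multiple of $[E]$; consequently the components mapping to $E$ occur in $\tau_X$-conjugate pairs, and each pair of simple covers carries total class $2[E]$. A configuration with $k$ conjugate pairs thus has main class $d_0=d-2k[E]$, and by Lemma \ref{lem:finite relative} the main immersion meets the embedded curve $E$ transversally in $d_0\cdot[E]=d\cdot[E]+4k$ points, i.e.\ in $\frac{1}{2}d\cdot[E]+2k$ conjugate pairs. The pairs carrying an attached conjugate pair of copies of $E$ form an arbitrary choice of $k$ among these, giving the weight $\binom{\frac{1}{2}d\cdot[E]+2k}{k}$. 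A routine accounting of how the $(L\cup L_0,F)$-mass changes under this attachment — the $k$ added copies of $E$ in $C^+$ contribute $k([E]\cdot F)$ to the exponent, while the non-real attachment nodes create no new real elliptic nodes in $L\cup L_0$ — then matches each configuration to the sign of its main curve, and summing over $k$ yields the first formula.

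For the second identity the geometry away from $E$ is unchanged, so the main component, living where $X_\R$ and $Y_\R$ are canonically identified, is still counted by $W^{U,V}_{X_\R}(d_0)$; only the gluing near $E$ is governed by the new real structure $\tau_Y$. The decisive difference is that after the surgery $[E]$ is $\tau_Y$-\emph{invariant} (as recorded after Figure \ref{fig:degen}), so a single real cover of $E$ is now admissible and the balance becomes $d_0=d-k[E]$ for every $k$. Using $d\cdot[E]=0$, the main curve of class $d-k[E]$ meets $E$ in $2k$ points, forming $k$ conjugate pairs (none real, since $L$ is disjoint from $E$); at each such pair there are exactly two real gluings compatible with $\tau_Y$, corresponding to the two real branches of the local family $Q_t$ of Figure \ref{fig:degen}, which produces the factor $2^k$. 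The hypothesis $F\in[E]^\perp$ guarantees that inserting these $E$-components leaves $[f(C^+)]\cdot F$ unchanged modulo $2$, so all $2^k$ configurations enter with the same sign as the main curve, giving the second formula.

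The main obstacle I expect is the symplectic–analytic control of the degeneration at the wall: ruling out multiple covers of the $(-2)$-curve $E$ contributing with unexpected multiplicities, excluding additional bubbling (off the attachment points, or inside $L$, where it could alter the mass), and verifying that every listed boundary configuration is a regular, transversally cut-out limit counted exactly once. This is where the self-intersection $-2$ rigidity of $E$, positivity of intersections of $J$-holomorphic curves, and above all the genericity and simple-immersion conclusions of Lemma \ref{lem:finite relative} do the work; once these are in place, the two combinatorial weights $\binom{\frac{1}{2}d\cdot[E]+2k}{k}$ and $2^k$ follow from the elementary counts above.
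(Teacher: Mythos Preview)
Your degeneration idea and the combinatorics are correct, and for the first identity your argument is essentially equivalent to the paper's: the paper degenerates $X_\R$ via the real symplectic sum $X_\sharp=Z\cup_E(\CP^1\times\CP^1)$ (Section~\ref{sec:sympl sum}), and Proposition~\ref{prop:degeneration} together with Corollary~\ref{lem:lem2} produce exactly the main component of class $d-2k[E]$ and the binomial weight you describe. Your ``path of $J_t$'s making $E$ holomorphic at the endpoint'' is a legitimate alternative packaging of the same limit analysis, and the justification you flag as the main obstacle (simple covers only, unique smoothing) is precisely what Proposition~\ref{prop:degeneration} supplies via \cite{IP} and \cite{BP14}.

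For the second identity, however, there is a genuine gap in your framing. You work on the fixed manifold $Y_\R$ and claim the main component ``lives where $X_\R$ and $Y_\R$ are canonically identified'' and is therefore counted by $W^{U,V}_{X_\R}(d_0)$. But the main component is a $\tau_Y$-real curve on $Y_\R$ that \emph{does} meet $E$, and near $E$ the involutions $\tau_X$ and $\tau_Y$ differ; a $\tau_Y$-real immersion need not be $\tau_X$-real, so there is no a priori reason its signed count equals $W^{U,V}_{X_\R}(d_0)$ rather than $W^{U,V}_{Y_\R}(d_0)$. The paper avoids this by actually splitting the manifold: both $X_\R$ and $Y_\R$ are realized as real symplectic sums of the \emph{same} $Z_\R$ with $(\CP^1\times\CP^1,\tau_{A,0})$ and $(\CP^1\times\CP^1,\tau_{A,2})$ respectively (see the end of Section~\ref{sec:def real surgery}), and since $X_\R$ is deformation equivalent to $Z_\R$, the main component honestly lives in $Z_\R=X_\R$ for both sums. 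The difference between the two formulas then comes entirely from counting real line configurations in the $\CP^1\times\CP^1$ piece under the two real structures, which is where the $2^k$ arises. Your heuristic ``two real gluings at each conjugate pair, corresponding to the two real branches of $Q_t$'' is suggestive but is not a substitute for that count; to make your approach rigorous for the second identity you would need to invoke the symplectic sum explicitly, at which point it becomes the paper's proof.
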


According to Lemma \ref{lem:2 real class}, we have
$F\in H_2^{\tau_Y}(X\setminus L;\Z/2\Z)$ if
  $F\in[E]^\perp$ in $H_2(X;\Z/2\Z)$.
In particular  the number $ W^{\widehat
  U,V}_{Y_\R,L,\widehat L_0,F}(d;\underline r,s) $ in Theorem \ref{thm:relations 1}
is well defined.

In next corollary, we use the convention that
$$\binom{-1}{0}=0.$$
\begin{cor}\label{cor:reduction}
The number
$W^{U,V}_{X_\R,L,L_0,F}(d;\underline r,s)$ can be expressed in terms of the numbers
$W^{\widehat U,V}_{X_\R,L,L_0,F}(d-2k[E];\underline r,s)$ with $k\ge 0$. More precisely,
we have
$$W^{U,V}_{X_\R,L,L_0,F}(d;\underline r,s)=\sum_{k\ge 0}(-1)^{k}\left(
\binom{\frac{1}2 d\cdot [E] +k}{\frac{1}2 d\cdot [E]} + 
\binom{\frac{1}2 d\cdot [E] +k-1}{\frac{1}2 d\cdot [E]} 
\right) W^{\widehat U,V}_{X_\R,L,L_0,F}(d-2k[E];\underline r,s).$$
\end{cor}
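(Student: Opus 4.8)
The plan is to read the corollary as the inversion of the first relation in Theorem \ref{thm:relations 1}. Set $m=\frac12 d\cdot[E]$ and abbreviate $a_i=W^{U,V}_{X_\R,L,L_0,F}(d-2i[E],s)$ and $b_i=W^{\widehat U,V}_{X_\R,L,L_0,F}(d-2i[E],s)$. Since $[E]^2=-2$, one has $\frac12(d-2j[E])\cdot[E]=m+2j$, so applying relation $(\ref{equ:relation 1})$ to the class $d-2j[E]$ reads $b_j=\sum_{i\ge j}\binom{m+2i}{i-j}\,a_i$. These sums are finite because $\omega_X\cdot(d-2i[E])$ becomes negative for $i\gg0$, forcing the invariant to vanish. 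This is an upper-triangular system in the $a_i$ with $1$'s on the diagonal, hence uniquely invertible, and the assertion of the corollary is exactly the $j=0$ row of its inverse.

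I would then substitute the claimed expression for $a_0=W^{U,V}_{X_\R,L,L_0,F}(d,s)$ into the system and collect the coefficient of each $a_i$. Writing $c_k=\binom{m+k}{m}+\binom{m+k-1}{m}$ (with the convention $\binom{-1}{0}=0$ covering the term $c_0$ when $m=0$), proving the corollary reduces to the purely combinatorial identity $\sum_{k=0}^{i}(-1)^k c_k\binom{m+2i}{i-k}=\delta_{i,0}$ for all $i\ge0$.

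The key step, which I expect to be the cleanest part, is a generating-function computation. The standard series $\sum_{k\ge0}\binom{m+k}{m}t^k=(1-t)^{-(m+1)}$ together with $\sum_{k\ge0}\binom{m+k-1}{m}t^k=t(1-t)^{-(m+1)}$ gives $\sum_{k\ge0}c_k t^k=\frac{1+t}{(1-t)^{m+1}}$, whence $\sum_{k\ge0}(-1)^k c_k x^k=\frac{1-x}{(1+x)^{m+1}}$. Writing $\binom{m+2i}{i-k}=[x^i]\,x^k(1+x)^{m+2i}$, where $[x^i]$ denotes the coefficient of $x^i$, and extending the sum to all $k\ge0$ (the extra terms contribute $0$), the inner sum becomes
\[
[x^i]\,\frac{1-x}{(1+x)^{m+1}}(1+x)^{m+2i}=[x^i](1-x)(1+x)^{2i-1}=\binom{2i-1}{i}-\binom{2i-1}{i-1}.
\]
For $i\ge1$ this vanishes by the symmetry $\binom{2i-1}{i}=\binom{2i-1}{i-1}$, while for $i=0$ it equals $[x^0]\frac{1-x}{1+x}=1$, so the inner sum is indeed $\delta_{i,0}$.

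The main obstacle here is bookkeeping rather than depth: one must correctly track how the top binomial argument shifts from $m$ to $m+2j$ under $d\mapsto d-2j[E]$ (this is precisely where $[E]^2=-2$ enters), and dispose of the degenerate $m=0$, $k=0$ term via the stated convention. Once the triangular system is written out, the inversion collapses to the symmetry of a single binomial coefficient, which is why the final answer is so compact.
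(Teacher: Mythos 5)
Your argument is correct, and at the structural level it coincides with the paper's: both apply the first relation of Theorem \ref{thm:relations 1} to the classes $d-2j[E]$, use $[E]^2=-2$ to see that the top binomial argument shifts to $m+2i$ (with $m=\frac12 d\cdot[E]$), and regard the result as an upper-triangular unit-diagonal system to be inverted. Where you genuinely diverge is in the combinatorial verification. The paper proves the \emph{entire} inverse matrix (Proposition \ref{prop:inverse matrix}) by multiplying the two matrices out, rewriting the entries with the identity $\binom{u}{v}\binom{v}{w}=\binom{u}{w}\binom{u-w}{v-w}$, and disposing of the off-diagonal sums by invoking \cite[Identity 5.25]{GKP94}. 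You instead verify only the single row the corollary asserts, reducing everything to $\sum_{k=0}^{i}(-1)^k c_k\binom{m+2i}{i-k}=\delta_{i,0}$ with $c_k=\binom{m+k}{m}+\binom{m+k-1}{m}$, and prove that identity by a generating-function computation in which the $m$-dependence cancels and the sum collapses to $[x^i](1-x)(1+x)^{2i-1}=\binom{2i-1}{i}-\binom{2i-1}{i-1}=0$ for $i\ge 1$. Your route is leaner and self-contained: no external binomial identity is cited, and the direct-substitution logic even makes the invertibility remark superfluous, since you verify the claimed formula rather than solve the system. Conversely, the paper's computation gives all rows of the inverse at once — though nothing beyond the first row is ever used, and indeed the remaining rows follow from yours by replacing $d$ with $d-2(i-1)[E]$, i.e. $m$ with $m+2(i-1)$, which is visible in the paper's own substitution $m'=m+2i-2$. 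Two further points are handled properly on your side: the degenerate $m=0$, $k=0$ term is correctly absorbed by the paper's stated convention $\binom{-1}{0}=0$ (your series $t(1-t)^{-(m+1)}$ implements exactly this convention), and your justification of finiteness of the sums — the $\omega_X$-area of $d-2i[E]$ is eventually negative because $E$ is symplectic, so the corresponding invariants vanish — supplies a point the paper leaves implicit.
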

\begin{proof}
Relation $(\ref{equ:relation 1})$ expresses the numbers
$W^{\widehat U,V}_{X_\R,L,L_0,F}(d-2k[E];\underline r,s)$ in terms of the numbers
$W^{U,V}_{X_\R,L,L_0,F}(d-2(k+l)[E];\underline r,s)$, $k,l\ge 0$. Since this is an
upper triangular linear system 
with coefficients 1 on the diagonal, it is invertible. 
The exact expression of $W^{U,V}_{X_\R,L,L_0,F}(d;\underline r,s)$ in terms of the
numbers $W^{\widehat U,V}_{X_\R,L,L_0,F}(d-2k[E];\underline r,s)$ follows from
Proposition \ref{prop:inverse matrix}.
\end{proof}

\begin{prop}\label{prop:inverse matrix}
Let $m\in \Z_{\ge 0}$ and $K\in \Z_{\ge 1}$, and let $M$ be the matrix
$$M=\left(
\binom{m +2(j-1)}{j-i} 
\right)_{1\le i,j\le K}. $$
Then the matrix $M$ is invertible and we have
$$M^{-1}=\left((-1)^{i+j}\left(
\binom{m +i+j-2}{m+2i-2} + 
\binom{m +i+j-3}{m+2i-2} 
\right)\right)_{1\le i,j\le K}.$$
\end{prop}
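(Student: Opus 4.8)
The plan is to verify directly that the proposed matrix $N=\left((-1)^{i+j}\bigl(\binom{m+i+j-2}{m+2i-2}+\binom{m+i+j-3}{m+2i-2}\bigr)\right)_{1\le i,j\le K}$ satisfies $MN=\mathrm{Id}$. First I would record that $M$ is upper unitriangular: the entry $M_{ij}=\binom{m+2j-2}{j-i}$ vanishes unless $j\ge i$, and $M_{ii}=\binom{m+2i-2}{0}=1$. The same holds for $N$, since $\binom{m+i+j-2}{m+2i-2}$ vanishes unless $j\ge i$, equals $1$ when $j=i$, and is then accompanied by the vanishing term $\binom{m+2i-3}{m+2i-2}=0$. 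Hence $M$ is invertible, its inverse is the unique upper unitriangular matrix $N$ with $MN=\mathrm{Id}$, and it suffices to check that the $(i,k)$ entry of $MN$ vanishes for $i<k$, the diagonal being automatically $1$.

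Next I would rewrite the off-diagonal entry as a one-parameter binomial sum. Using the complementation identities $\binom{m+2j-2}{j-i}=\binom{m+2j-2}{m+i+j-2}$, $\binom{m+j+k-2}{m+2j-2}=\binom{m+j+k-2}{k-j}$, and $\binom{m+j+k-3}{m+2j-2}=\binom{m+j+k-3}{k-j-1}$, and substituting $p=j-i$, $n=k-i$, $A=m+2i-2$, $B=A+n$, the $(i,k)$ entry of $MN$ becomes $(-1)^{i+k}S$ with
$$S=\sum_{p=0}^{n}(-1)^p\binom{A+2p}{p}\left[\binom{B+p}{n-p}+\binom{B+p-1}{n-p-1}\right].$$
Since $m\ge 0$ and $i\ge 1$ we have $A\ge 0$. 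The goal is then to show $S=0$ for $n\ge 1$, and $S=1$ for $n=0$ (recovering the diagonal).

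The evaluation of $S$ is the heart of the argument, and I would carry it out with generating functions. The key input is the classical identity $\sum_{p\ge 0}\binom{A+2p}{p}x^p=C(x)^{A}/\sqrt{1-4x}$, where $C(x)=(1-\sqrt{1-4x})/(2x)$ is the Catalan series (provable by Lagrange inversion, or by induction on $A$ from $C=1+xC^2$). Writing each bracketed binomial as a coefficient, namely $\binom{B+p}{n-p}=[y^n]\,y^p(1+y)^{B+p}$ and $\binom{B+p-1}{n-p-1}=[y^n]\,y^{p+1}(1+y)^{B+p-1}$, the inner sum over $p$ turns into the above generating function evaluated at $x=-y(1+y)$. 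The decisive point is that under this substitution $1-4x=(1+2y)^2$ is a perfect square, so $\sqrt{1-4x}=1+2y$ and $C(x)=1/(1+y)$; as $B-A=n$, the two pieces collapse to $[y^n]\,(1+y)^n/(1+2y)=(-1)^n$ and $[y^{n-1}]\,(1+y)^{n-1}/(1+2y)=(-1)^{n-1}$, each computed from $\sum_{i}\binom{n}{i}(-2)^{n-i}=(1-2)^n$. Their sum vanishes for $n\ge 1$, which completes the verification. The main obstacle is spotting the substitution $x=-y(1+y)$ that rationalizes the Catalan series; once it is in hand the remaining coefficient extractions are immediate.
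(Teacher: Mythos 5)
Your proof is correct, and it shares the paper's overall skeleton: both arguments verify directly that $MN=\mathrm{Id}$, exploit upper unitriangularity, and reindex (your $A=m+2i-2$, $n=k-i$ is exactly the paper's $m'=m+2i-2$, $I=j-i$) to reduce everything to the vanishing of the single alternating sum $S=\sum_{p=0}^n(-1)^p\binom{A+2p}{p}\bigl[\binom{B+p}{n-p}+\binom{B+p-1}{n-p-1}\bigr]$ for $n\ge 1$. Where you genuinely diverge is in the evaluation of $S$. The paper applies trinomial revision $\binom{u}{v}\binom{v}{w}=\binom{u}{w}\binom{u-w}{v-w}$ to split $S$ into two alternating sums and then quotes \cite[Identity 5.25]{GKP94} to evaluate each as $\pm(-1)^I$, so the cancellation is outsourced to a known closed form; you instead encode the sum via the generating function $\sum_{p\ge 0}\binom{A+2p}{p}x^p=C(x)^A/\sqrt{1-4x}$ and notice that the substitution $x=-y(1+y)$ rationalizes it ($\sqrt{1-4x}=1+2y$, $C(x)=1/(1+y)$), after which the two pieces collapse to $[y^n](1+y)^n/(1+2y)=(-1)^n$ and $[y^{n-1}](1+y)^{n-1}/(1+2y)=(-1)^{n-1}$ and cancel. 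Your route is self-contained modulo one classical identity (which you rightly note follows from $C=1+xC^2$ by induction on $A$), and it is more transparent about the mechanism of cancellation, since $B-A=n$ matching the extracted power of $y$ is visibly the whole point; the paper's route is shorter on the page but opaque without the external reference. One pedantic caveat: your assertion that the companion diagonal term $\binom{m+2i-3}{m+2i-2}$ vanishes requires, in the single case $m=0$, $i=1$, the convention $\binom{-1}{0}=0$ adopted in the paper just before Corollary \ref{cor:reduction} (under the usual polynomial convention one has $\binom{-1}{0}=1$); since the proposition itself needs that convention to hold at $m=0$, this is consistent, but it deserves a word. With that noted, your argument is complete, and all the coefficient extractions, the extension of the sum from $p\le n$ to all $p\ge 0$, and the legitimacy of the substitution (constant term of $-y(1+y)$ is zero) check out.
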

\begin{proof}
Let $(\nu_{i,j}))_{1\le i,j\le K}$ be 
 the product of the two above matrices. Then we have
\begin{align*}
\nu_{i,j}&=\sum_{k=1}^K (-1)^{k+j} \binom{m +2(k-1)}{k-i} 
\left(
\binom{m +k+j-2}{m+2k-2} + 
\binom{m +k+j-3}{m+2k-2} 
\right)
\\ &=  (-1)^{j}\ \sum_{k=i}^j (-1)^{k} \binom{m +2(k-1)}{k-i} 
\left(
\binom{m +k+j-2}{m+2k-2} + 
\binom{m +k+j-3}{m+2k-2} 
\right).
\end{align*}
Defining $m'=m+2i-2$ and $I=j-i$, we get
\begin{align*}
\\ \nu_{i,j}&=  (-1)^{I}\ \sum_{k=0}^{I} (-1)^{k} \binom{m'+2k}{k} 
\left(
\binom{m'+I +k}{m'+2k} + 
\binom{m'+I +k-1}{m'+2k} 
\right).
\end{align*}
Using the identity
$$\binom{u}{v}\binom{v}w=\binom u w \binom{u-w}{v-w}$$
we obtain
\begin{align*}
\\ \nu_{i,j}&=  (-1)^{I}\ \sum_{k=0}^{I} (-1)^{k} \left(
\binom{m'+I+k}{k} \binom{m'+I}{m'+k} +
\binom{m'+I +k-1}{k}\binom{m'+I -1}{m'+k} 
\right).
\end{align*}
In the case when $I=0$, this gives $\nu_{i,i}=1$. In the case when
$I>0$ we have
\begin{align*}
\\ \nu_{i,j}&=  (-1)^{I}\ \sum_{k=0}^{I} (-1)^{k} \left(
\binom{m'+I+k}{m'+I} \binom{m'+I}{m'+k} +
\binom{m'+I +k-1}{m'+I-1}\binom{m'+I -1}{m'+k} 
\right).
\end{align*}
It follows from \cite[Identity 5.25]{GKP94} that
\begin{align*}
\sum_{k=0}^{I} (-1)^{k}
\binom{m'+I+k}{m'+I} \binom{m'+I}{m'+k} &=(-1)^I
\\ &= -\sum_{k=0}^{I} (-1)^{k}\binom{m'+I +k-1}{m'+I-1}\binom{m'+I -1}{m'+k}.
\end{align*}
Hence $\nu_{i,j}=0$ if $i\ne j$, and the proposition is proved.
\end{proof}

\subsubsection{}
Suppose  that $V\ne\emptyset$, and let $E\in V$. We denote by
$\widehat V=V\setminus{E}$, and by $Y_\R$ the surgery of $X_\R$
along $E$. 
Let $L_i$ be the connected component of $L\cup L_0$ whose boundary contains
$\R E$. 
Recall that $\R Y$ is obtained topologically by cutting $\R X$
along $\R E$ and gluing back a disk along each boundary circle. 
We
define $\widehat L_j=L_j$ for $j\ne i$, and by $\widehat L_i$ the
union of $L_i$ with the corresponding glued disk. Accordingly, we
define
$$\widehat L=\bigcup_{j=1}^{g+1}\widehat L_j .$$
Next theorem is proved in Section \ref{sec:relations}.

\begin{thm}\label{thm:relations 2}
Let $F\in H^{\tau_X}_2(X\setminus L;\Z/2\Z)$
 orthogonal in $H_2(X;\Z/2\Z)$ 
to all classes realized by
the curves in $V$. Then we have
$$ W^{U,\widehat V}_{Y_\R,\widehat L,\widehat L_0,F}(d;\underline r,s) =  W^{U,V}_{X_\R,L,L_0,F}(d;\underline r,s).$$
\end{thm}
The assumption of  Theorem \ref{thm:relations 2} ensures that 
 the number $ W^{U,\widehat V}_{Y_\R,\widehat L,\widehat L_0,F}(d;\underline r,s) $ 
is well defined by Lemma \ref{lem:2 real class}.

\subsection{Proof of Theorems \ref{thm:main1}, \ref{thm:relative}, \ref{thm:relations 1}, and \ref{thm:relations 2}}\label{sec:relations}
\subsubsection{Symplectic sums}\label{sec:sympl sum}
Here we describe a
very particular case of
 the symplectic sum 
formula from
\cite{IP,TehZin14}.
Since we are working in this paper only with symplectic sums of
$4$-dimensional manifolds (which are in particular 
semi-positive) along spheres (for which the so-called $S$-matrix is
the identity),
none of the major issues with \cite{IP} raised in
  \cite{TehZin14} are relevant in our situation.
Let $(Z,\omega_Z)$ be a compact and connected symplectic
manifold of dimension 4 containing an embedded symplectic sphere $E$
with $[E]^2=-2$. We furthermore assume the existence of a
symplectomorphism $\phi$ from $E$ to 
 a symplectic curve
realizing the class $l_1+l_2$ in 
$(\C P^1\times \C P^1,\omega_{FS}\oplus \omega_{FS})$.
 By abuse, we still denote by $E$ the image $\phi(E)$ in $\C P^1\times \C P^1$.
Since the self-intersection of $E$ in $\C P^1\times \C P^1$ and $Z$ are opposite, 
there exists a symplectic bundle isomorphism $\psi$
between the  normal bundle of $E$ in $\C P^1\times \C P^1$ and  
the dual of the normal bundle of $E$ in $Z$.
Out of these data, one produces a family of symplectic
$4$-manifolds $(\mathcal Z_t,\omega_{t})$ parametrized by a small complex
number $t$ in $\C^*$, see \cite{Gom95}. All these manifolds are deformation
equivalent, and are called \emph{symplectic sums of $(\C P^1\times \C P^1,\omega_{FS}\oplus \omega_{FS})$ and
$(Z,\omega_Z)$ along $E$}.
This family can be seen as a symplectic
deformation of the singular 
symplectic manifold $X_\sharp= Z\cup_E \left(\C P^1\times \C P^1\right)$ 
obtained by gluing  $(Z,\omega_Z)$ and $(\C P^1\times \C P^1,\omega_{FS}\oplus \omega_{FS})$ along
$E$.

\begin{prop}[{\cite[Theorem 2.1]{IP}},{\cite[Proposition 3.1]{TehZin14}}]\label{thm:degen}
There exists a symplectic $6$-manifold $(\mathcal Z, \omega_{\mathcal Z})$
and a symplectic fibration $\pi : \mathcal Z\to D$ over a disk $D\subset\C$ such that
the central fiber $\pi^{-1}(0)$ is the singular symplectic
manifold $X_\sharp$, and $\pi^{-1}(t)=(\mathcal Z_{t},\omega_{t})$
for $t\ne 0$.
\end{prop}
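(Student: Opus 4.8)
The plan is to realise $\mathcal Z$ as a symplectic deformation of the singular space $X_\sharp$, obtained by thickening a local model for the normal crossing along $E$. First I would invoke the symplectic neighbourhood theorem to put the two pieces in standard form near $E$: a neighbourhood of $E$ in $(Z,\omega_Z)$ is symplectomorphic to a neighbourhood of the zero section in the total space of the normal bundle $N=N_{E/Z}$, with a canonical form determined by $\omega_Z|_E$ and a Hermitian connection, and likewise a neighbourhood of $E$ in $(\CP^1\times\CP^1,\omega_0)$ is symplectomorphic to a neighbourhood of the zero section of $N'=N_{E/\CP^1\times\CP^1}$. Since $[E]^2=-2$ in $Z$ and $[E]^2=2$ in $\CP^1\times\CP^1$, the line bundles $N$ and $N'$ have opposite degree, and the hypothesised symplectic isomorphism $\psi$ identifies $N'$ with the dual $N^*$.

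The heart of the construction is the local model over $E$. I would form the rank-$2$ bundle $N\oplus N^*\to E$ together with the fibrewise pairing
$$\Phi:N\oplus N^*\to\C,\qquad \Phi(v,\alpha)=\alpha(v),$$
and equip a neighbourhood $\mathcal W$ of the zero section with a symplectic form $\Omega$ assembled from $\omega_E$ and the curvatures of the connections on $N$ and $N^*$. Because the fibres are complex lines, $\alpha(v)=0$ forces $v=0$ or $\alpha=0$, so the central fibre $\Phi^{-1}(0)$ is exactly the union of the disk bundle in $N$ and the disk bundle in $N^*$ glued along $E$, matching the two local models above; for $t\ne0$ the fibre $\Phi^{-1}(t)=\{\alpha(v)=t\}$ is a smooth symplectic neck diffeomorphic to the total space of $N$ minus its zero section. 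This is the standard plumbing picture, and one checks that $\Phi$ is a symplectic fibration away from the zero section.

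Then I would glue. Set
$$\mathcal Z=\big((Z\setminus E)\times D\big)\ \cup\ \mathcal W\ \cup\ \big((\CP^1\times\CP^1\setminus E)\times D\big),$$
attaching the two product pieces to $\mathcal W$ through the symplectic neighbourhood identifications of Step~1, and define $\pi$ to be the second projection on the product pieces and $\Phi$ on $\mathcal W$. By construction $\pi^{-1}(0)=X_\sharp$ and $\pi^{-1}(t)=(\mathcal Z_t,\omega_t)$ for $t\ne0$, recovering Gompf's family $(\mathcal Z_t,\omega_t)$. It remains to produce the global form $\omega_{\mathcal Z}$: on the bulk pieces one takes $\omega_Z$ (resp.\ $\omega_0$) plus an area form on $D$, on $\mathcal W$ one takes $\Omega$, and on the overlaps one interpolates with a radial cutoff and corrects by the Moser trick so that the result is closed and restricts to a symplectic form on every fibre.

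The hard part will be precisely this last step: patching the product symplectic forms on $(Z\setminus E)\times D$ and $(\CP^1\times\CP^1\setminus E)\times D$ to the twisted neck form $\Omega$ so that $\omega_{\mathcal Z}$ stays nondegenerate on fibres throughout the gluing region and tames the fibration. The delicate points are choosing the connections and the cutoff so that the two normal-bundle models agree to sufficiently high order with $\Omega$ on the overlaps, and checking that the fibrewise restriction degenerates nowhere as $t\to0$; once these compatibilities are arranged, closedness and the statements about $\pi^{-1}(0)$ and $\pi^{-1}(t)$ follow formally, which is why I would cite \cite{IP} for the detailed estimates rather than reproduce them.
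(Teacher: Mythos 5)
The paper does not actually prove this proposition: it is quoted verbatim from Ionel--Parker (\cite[Theorem 2.1]{IP}), with the underlying construction going back to Gompf's symplectic sum \cite{Gom95}. Your sketch is a faithful reconstruction of exactly that construction --- standard form near $E$ via the symplectic neighbourhood theorem, the plumbing model $\Phi(v,\alpha)=\alpha(v)$ on a neighbourhood of the zero section of $N\oplus N^*$ (using $\psi$ to identify $N_{E/\CP^1\times\CP^1}$ with $N^*$), gluing in the two product pieces, and a Moser-type patching of the forms --- so there is no divergence of method, only the paper's choice to cite rather than reprove. One point you should make explicit: the attachment of $(Z\setminus E)\times D$ to $\mathcal W$ cannot be the bare neighbourhood identification fibrewise, since the fibre of $\Phi$ over $t\ne 0$ near the $N$-side boundary is the graph $\{\alpha=t\,v^{-1}\}$ rather than $N$ itself; compatibility of $\pi$ with the second projection forces the $t$-dependent gluing $(v,t)\mapsto(v,t\,v^{-1})$, which is well defined on the annular overlap for $|t|$ small. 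With that spelled out, your outline is correct, and the analytic content you defer (choice of connections, cutoff, and nondegeneracy of the fibrewise restriction uniformly as $t\to 0$) is precisely what \cite[Section 2]{IP} supplies.
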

Topologically, $\mathcal Z_t$ is simply the connected sum along $E$ of $Z$ with
$\C P^1\times \C P^1$. Since $\C P^1\times \C P^1\setminus E$ is 
the normal bundle of $E$ in $Z$,  this connected sum is trivial
and we have $\mathcal Z_t=Z$.
Furthermore, without loss of generality we may assume that the homology
class
realized by $E$ in its normal bundle is the restriction of the class
$l_1-l_2$ in $H_2(\C P^1\times \C P^1;\Z)$.

\medskip
In addition to $E$, suppose that $Z$ contains a collection 
$W=\{E_1,\ldots,E_\kappa\}$ of pairwise disjoint
embedded symplectic spheres
with $[E_i]^2=-2$ that are all disjoint from $E$.
Let $d \in H_2(\mathcal Z_{t};\Z)$, and 
choose  $\x(t)$ 
a  set of $c_1(X)\cdot d+g-1$ 
symplectic sections 
$$ D \to  \mathcal Z\setminus \bigcup_{i=1}^\kappa E_i$$
 such that 
$\x(0)\cap \left(E\displaystyle \bigcup_{i=1}^\kappa E_i\right)=\emptyset$. Choose an almost complex structure
$J$ on $\mathcal Z$ tamed by $\omega_{\mathcal Z}$, which restricts to 
an almost complex structure
$J_t$ tamed by $\omega_t$ on each fiber 
$\mathcal Z_t$ and for which all curves $E_1,\ldots,E_\kappa$ are
$J_t$-holomorphic. We assume that $J$ is 
 generic with respect to all choices
we made. 
Recall that the set $\CC^\C(d,g,\x(t),W,J_t)$ for $t\ne 0$ has been
defined in Section \ref{sec:preliminaries}.
We  define $\CC^\C(d,g,\x(0),W,J_0)$ to be
the set $\left\{\overline f:\overline C  \to X_\sharp \right\}$
of limits, as stable maps, 
 of
  maps in $\CC^\C(d,g,\x(W,t),J_t)$ as $t$
  goes to $0$.
Recall (see {\cite[Section 3]{IP}}, or comments following
{\cite[Theorem 1.1]{TehZin14}}) that $\overline C$ is a connected nodal
 curve of arithmetic genus $g$  such that:
\begin{itemize}
\item $\x(0)\subset \overline f(\overline C)$;
\item any point $p\in \overline f^{\ -1}(E)$ is a node of 
$\overline C$ which is the intersection of two 
irreducible components $\overline C'$ and $\overline C''$ of
$\overline C$, with  $\overline f(\overline
C')\subset Z$ and $\overline f(\overline
C'')\subset  \CP^1\times\CP^1$;

\item if in addition neither $\overline f(\overline
C')$ nor $\overline f(\overline
C'')$ is entirely mapped to $E$, then
the multiplicity of intersection
of both 
$\overline f(\overline
C')$ and $\overline f(\overline
C'')$ with $E$ are equal.
\end{itemize}
Formally, {\cite[Section 3]{IP}} only deals with the case $\kappa=0$, 
  however the value of  $\kappa$ plays no role there.
Given  an element $\overline f:\overline C\to X_\sharp$ 
of $\CC^\C(d,g,\x(0),W,J_0)$, we denote by $C_1$ (resp. $C_0$)
 the union of the irreducible
components of $\overline C$ mapped to $Z$ (resp. $\CP^1\times\CP^1$).
The next three statements are proved in
 \cite[Section 3.2]{BP14}. Again formally,
\cite[Lemma 3.6, Propositions 3.7, and Corollary 3.8]{BP14} are stated for $g=0$ and $\kappa=0$, however neither
$g$ nor $\kappa$  plays no role in their proof once we replace
\cite[Proposition 3.2]{BP14} by Lemma \ref{lem:finite relative} above.
\begin{lemma}\label{lem:homology}
Given an element $\overline f:\overline C\to X_\sharp$
of $\CC^\C(d,g,\x(0),W,J_0)$,
there exists
$k\in\Z_{\ge 0}$ such that
$$
\overline f_*[C_1] = d -  k[E] \quad \textrm{and} \quad
\overline f_*[C_0]= kl_1 + (d\cdot [E] +k)l_2 .
$$
Moreover $c_1(Z)\cdot \overline f_*[C_1]=c_1(\mathcal Z_t)\cdot d$.
\end{lemma}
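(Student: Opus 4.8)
The plan is to determine the two classes $d_1:=\overline f_*[C_1]\in H_2(Z;\Z)$ and $d_0:=\overline f_*[C_0]\in H_2(\CP^1\times\CP^1;\Z)$ by testing them against a convenient set of classes and exploiting the behaviour of intersection numbers under the degeneration $\pi:\mathcal Z\to D$. First I would write $d_0=a\,l_1+b\,l_2$ with $a,b\ge 0$, since $C_0$ is an effective $J_0$-holomorphic cycle in $\CP^1\times\CP^1$; by genericity of $J$ (as in Lemma~\ref{lem:finite relative}) I may assume that no component of $\overline C$ is entirely mapped into $E$, so that $\overline C=C_1\cup C_0$ with all nodes over $E$ being honest transverse intersections. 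The argument then rests on the principle that an intersection number computed in a fibre $\mathcal Z_t$ can be read off in the central fibre $X_\sharp$ by intersecting the limit cycle $C_1\cup C_0$ with the limit of a test cycle, provided the test cycle is carried by a family over $D$ avoiding the gluing locus $E$.

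The heart of the proof is three intersection computations. For any $\gamma\in H_2(Z;\Z)$ with $\gamma\cdot[E]=0$, represent $\gamma$ by a cycle supported in $Z\setminus\nu(E)$, where $\nu(E)$ is a tubular neighbourhood of $E$; this cycle sits in every fibre $\mathcal Z_t$ under the identification $H_2(\mathcal Z_t;\Z)\cong H_2(Z;\Z)$, and as $t\to0$ it stays away from the $\CP^1\times\CP^1$-part, so $d\cdot\gamma=(C_1\cup C_0)\cdot\gamma=d_1\cdot\gamma$. Thus $(d-d_1)\cdot\gamma=0$ for every $\gamma\in[E]^\perp$, and since $[E]^2=-2\ne0$ the intersection form forces $d_1=d-k[E]$ in $H_2(Z;\mathbb Q)$ for some $k\in\mathbb Q$. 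Next, intersecting $d$ with $[E]$, which in $\mathcal Z_t$ is realised by the curve $\Psi(E)$ of class $l_1-l_2$ lying on the $\CP^1\times\CP^1$-side and disjoint from $E$, and passing to the limit gives $d\cdot[E]=d_0\cdot(l_1-l_2)=b-a$. Finally, the matching of intersection multiplicities with $E$ at the nodes (the third bullet describing $\CC^\C(d,\x(0),W,J_0)$) yields $d_1\cdot[E]=d_0\cdot(l_1+l_2)=a+b$.

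Combining these is elementary: from $d_1=d-k[E]$ one has $d_1\cdot[E]=d\cdot[E]+2k$, and together with $d\cdot[E]=b-a$ and $d_1\cdot[E]=a+b$ this forces $k=a$. In particular $k=a$ is a non-negative integer, so $d_1=d-k[E]$ holds already over $\Z$, while $b=d\cdot[E]+k$ gives $d_0=k\,l_1+(d\cdot[E]+k)\,l_2$, as claimed. For the last assertion, I would note that $c_1(Z)\cdot[E]=0$ by the adjunction formula for the $(-2)$-sphere $E\subset Z$, whence $c_1(Z)\cdot d_1=c_1(Z)\cdot d$; and that under the identification $H_2(\mathcal Z_t;\mathbb Q)\cong H_2(Z;\mathbb Q)$ the classes $c_1(\mathcal Z_t)$ and $c_1(Z)$ agree, since they restrict to the same class on the common region $Z\setminus\nu(E)$, hence agree on $[E]^\perp$, and both vanish on $[E]$ (the class $[E]$ being realised by an embedded $(-2)$-sphere in $\mathcal Z_t$ as well). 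This gives $c_1(Z)\cdot d_1=c_1(\mathcal Z_t)\cdot d$.

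The step I expect to be the main obstacle is the rigorous justification of the ``intersect in the limit'' principle used throughout the second paragraph: one must check that the chosen test cycles, namely the off-$E$ representatives of $[E]^\perp$ and the curve $\Psi(E)$, persist in a flat family over $D$ and meet the curve of class $d$ in a number of points that is independent of $t$ and that localises, at $t=0$, exactly on $C_1$ respectively $C_0$. This is where the properness of $\pi$ and the fact that these test cycles avoid the vanishing cycle $E$ enter, and together with the genericity input excluding components mapped into $E$ it is the only non-formal ingredient; it is precisely the content borrowed from \cite[Section 2.2]{BP14}.
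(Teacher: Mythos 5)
Your argument is sound in outline and reaches the correct conclusion, but note first that the paper contains no proof of this lemma at all: it is imported from \cite[Section 2.2]{BP14} (Lemma 16 there), with the sole remark that the genus $g$ and the collection $W$ play no role in the original proof once \cite[Proposition 12]{BP14} is replaced by Lemma \ref{lem:finite relative}. So any comparison is with the argument of \cite{BP14} rather than with text in this paper. Your three intersection tests are a legitimate reconstruction of that argument: testing against classes in $[E]^\perp$ pushed off $E$ (possible since $E\cong S^2$ is simply connected, so excess intersections can be tubed off), testing against the vanishing-cycle sphere of class $l_1-l_2$ on the $\CP^1\times\CP^1$ side, and matching multiplicities along $E$. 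Equivalently, all three can be done at once by observing that the pair $\left(\overline f_*[C_1],\overline f_*[C_0]\right)$ must represent the image of $d$ under $H_2(\mathcal Z_t;\Z)\to H_2(\mathcal Z;\Z)\cong H_2(X_\sharp;\Z)\cong\left(H_2(Z;\Z)\oplus H_2(\CP^1\times\CP^1;\Z)\right)/\left\langle\left([E],-(l_1+l_2)\right)\right\rangle$ (Mayer--Vietoris, using $H_1(E;\Z)=0$), which forces exactly $d_1=d-k[E]$ and $d_0=kl_1+(d\cdot[E]+k)l_2$, with $k\in\Z_{\ge 0}$ by integrality and effectivity of $d_0$, as you argue. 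Your treatment of the last assertion ($c_1(Z)\cdot[E]=0$ by adjunction, and $c_1(\mathcal Z_t)$ agreeing with $c_1(Z)$ both on $[E]^\perp$ and on $[E]$, the latter because $[E]$ is realized in $\mathcal Z_t$ by a $(-2)$-sphere) is correct.

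The one genuinely unjustified step is your dismissal of components mapped into $E$: ``by genericity of $J$ (as in Lemma \ref{lem:finite relative}) I may assume that no component of $\overline C$ is entirely mapped into $E$.'' Lemma \ref{lem:finite relative} concerns curves in a fixed smooth symplectic manifold for generic $J$ among structures making the $E_i$ holomorphic; it says nothing about limits at $t=0$, and $E$ is not among the $E_i$ (which are disjoint from $E$). In the paper's own logic, the exclusion of components inside $E$ is part of the content of Proposition \ref{prop:degeneration} and requires the additional hypothesis $\x(0)\subset Z$, which Lemma \ref{lem:homology} does not assume --- note that the third bullet in the definition of $\CC^\C(d,\x(0),W,J_0)$ explicitly allows such components. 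Your proof needs the exclusion only in step (c), the multiplicity matching $d_1\cdot[E]=a+b$, and the gap is repairable in two ways: either observe that a component mapped into $E$ carries a class $m[E]=m(l_1+l_2)$, so attributing it to $C_1$ or to $C_0$ preserves the stated form of the conclusion and merely shifts $k$; or bypass step (c) altogether via the Mayer--Vietoris matching above, which is insensitive to such components. Finally, as you anticipate, the ``intersect in the limit'' principle is where the analytic content sits; the cleanest formulation is the single equality $[\overline f(\overline C)]=d$ in $H_2(\mathcal Z;\Z)$, obtained from Gromov compactness together with the retraction of $\mathcal Z$ onto the central fiber $X_\sharp$, and that is precisely what is carried by \cite[Section 2.2]{BP14}.
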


\begin{prop}\label{prop:degeneration}
Assume that $\x(0)\subset Z $.
Then for a generic   $J_0$,  the set
$\CC^\C(d,g,\x(0),W,J_0)$ is finite, and only depends on $\x(0)$ and
$J_{0}$.
 Given
$\overline f:\overline C\to  X_\sharp$
  an element of  $\CC^\C(d,g,\x(0),W,J_0)$, the
restriction of $\overline f$ to any component of $\overline C$ is a
simple map, and
no irreducible component of $\overline C$ is
  entirely mapped to $E$. Moreover 
 the curve $C_1$ is irreducible, 
and the image of any irreducible component of $C_0$ realizes
a class $l_i$.
The map $\overline f$ is the limit of a unique element of
$\CC^\C(d,g,\x(t),W,J_t)$ as $t$ goes to 0.
\end{prop}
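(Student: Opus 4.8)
The plan is to adapt the degeneration analysis of \cite[Section 2.2]{BP14} to the present setting, the only genuinely new input being Lemma \ref{lem:finite relative} (replacing \cite[Proposition 12]{BP14}) which allows me to accommodate the positive genus $g$ and the auxiliary collection $W$. First I would fix a limit $\overline f:\overline C\to X_\sharp$ in $\CC^\C(d,\x(0),W,J_0)$ and record, via Lemma \ref{lem:homology}, that $\overline f_*[C_1]=d-k[E]$ and $\overline f_*[C_0]=kl_1+(d\cdot[E]+k)l_2$ for some $k\ge 0$, that $c_1(Z)\cdot\overline f_*[C_1]=c_1(\mathcal Z_t)\cdot d$ (using $c_1\cdot[E]=0$), and that $C_1$ and $C_0$ are glued along $\overline f^{-1}(E)$ with matching contact orders summing to $(d-k[E])\cdot[E]=d\cdot[E]+2k$ on each side.

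The heart of the argument is a dimension count on each side of $X_\sharp=Z\cup_E(\CP^1\times\CP^1)$, carried out for a \emph{generic} $J_0$ making all the spheres of $W$ and the curve $E$ holomorphic. All $c_1(\mathcal Z_t)\cdot d+g-1$ points of $\x(0)$ lie in $Z$, so they bear entirely on $C_1$; since $\CC^\C(d,\x(t),W,J_t)$ is $0$-dimensional for $t\ne 0$, the total virtual dimension of the glued relative moduli space through these points is again $0$. For generic $J_0$ every stratum of simple relative maps is cut out transversally, so strata of negative expected dimension are empty; moreover Lemma \ref{lem:finite relative}, applied on the $Z$-side to the family $W\cup\{E\}$ of $(-2)$-spheres, rules out non-trivial ramified covers and components whose image lies in one of these spheres. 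This already yields the finiteness of $\CC^\C(d,\x(0),W,J_0)$, its dependence on $\x(0)$ and $J_0$ only, the simplicity of the restriction of $\overline f$ to each component, and the assertion that no component of $\overline C$ is entirely mapped to $E$.

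It then remains to localize the vanishing total dimension into the asserted rigid structure. Since no point condition is imposed in $\CP^1\times\CP^1$, an irreducible fibre of class $l_i$ with its single transverse intersection with $E$ has relative moduli dimension exactly $1$, which is absorbed by matching it to $C_1$; a direct count then shows that any other combinatorial type forces a $Z$-component into a relative moduli space of strictly negative expected dimension, hence empty. Concretely, an irreducible $C_0$-component of class $al_1+bl_2$ with $a+b\ge 2$, or of positive genus, meets $E$ in $a+b$ points and, through its attaching nodes, either raises the arithmetic genus or consumes contact points; preserving the arithmetic genus $g$ then compels $C_1$ to have correspondingly smaller genus, and since $C_1$ carries all the point constraints it lands in a relative moduli space of negative expected dimension, excluded for generic $J_0$. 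Hence $C_0$ is a disjoint union of fibres, each realizing a class $l_i$ and pinned down by its intersection with $E$; these rational tails leave the arithmetic genus unchanged, so $C_1$ has genus $g$, and the same count forces $C_1$ to be irreducible, which completes the structural claims.

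Finally, the uniqueness of the smoothing follows from the gluing theorem of \cite{IP} in its transverse form: because all contact multiplicities equal $1$ and the rigid limit $\overline f$ is unobstructed, the gluing map near $\overline f$ is a local diffeomorphism, so $\overline f$ is the limit of exactly one element of $\CC^\C(d,\x(t),W,J_t)$ as $t\to 0$. The main obstacle throughout is the transversality input: one must achieve genericity for the stretched almost complex structure $J_0$ while keeping every sphere of $W$ and the gluing curve $E$ holomorphic, and simultaneously exclude both the multiple covers and the components lying in these $(-2)$-curves. This is precisely the role of Lemma \ref{lem:finite relative}; once it is in force, the combinatorial bookkeeping that converts the zero total dimension into component-wise rigidity is the only remaining technical labour.
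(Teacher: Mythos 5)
Your proposal is correct and follows essentially the same route as the paper, which proves Proposition \ref{prop:degeneration} by carrying over the arguments of \cite[Section 2.2]{BP14} (stated there for $g=0$, $\kappa=0$) verbatim, with Lemma \ref{lem:finite relative} substituted for \cite[Proposition 12]{BP14} to handle positive genus and the collection $W$. Your dimension counts (forcing $g_1=g$, hence $C_1$ irreducible, rational tails in classes $l_i$ with transverse contacts) and the multiplicity-one gluing argument for uniqueness are precisely the content of \cite[Propositions 17]{BP14} adapted as the paper prescribes.
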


Next Corollary generalizes \cite[Corollary 3.8]{BP14} and Abramovich-Bertram-Vakil formula
\cite[Theorem 3.1.1]{AB}
and \cite[Theorem 4.2]{Vak2}.
\begin{cor}\label{lem:lem2}
Suppose that $\x(0)\subset Z$, and let $\overline f:\overline C\to  X_\sharp$
be  an element of  $\CC^\C(d,g,\x(0),W,J_0)$.
Define 
 $\CC_{\overline f}$ to be the set of elements  $\overline
 f':\overline C'\to X_\sharp$ in $\CC^\C(d,g,\x(0),W,J_0)$ such that 
 $\overline f_{| C_1}=\overline f'_{| C'_1}$.
If $\overline f_*[ C_1]=d-k[E]$, then $\CC_{\overline f}$ contains
exactly  $\binom{d\cdot 
  [E]+2k}{k}$ elements.
\end{cor}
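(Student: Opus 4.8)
The plan is to hold the $Z$-component $\overline f_{|C_1}$ fixed and to reduce the counting of $\CC_{\overline f}$ to a combinatorial choice along $E$. First I would count the intersection points of $\overline f(C_1)$ with $E$. By Proposition \ref{prop:degeneration} together with Lemma \ref{lem:finite relative}, for generic $J_0$ the map $\overline f_{|C_1}$ is a simple immersion meeting $E$ transversally; since $\overline f_*[C_1]=d-k[E]$ and $[E]^2=-2$, positivity of intersections gives that $\overline f(C_1)$ meets $E$ in exactly $N:=(d-k[E])\cdot[E]=d\cdot[E]+2k$ distinct points $q_1,\dots,q_N$.

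Next I would read off the structure of the $\CP^1\times\CP^1$-part. For any $\overline f'\in\CC_{\overline f}$, Proposition \ref{prop:degeneration} shows that $C_0'$ is a disjoint union of rational curves each realizing $l_1$ or $l_2$, and Lemma \ref{lem:homology} gives $\overline f'_*[C_0']=k\,l_1+(d\cdot[E]+k)\,l_2$; hence $C_0'$ has exactly $k$ components of class $l_1$ and $d\cdot[E]+k$ of class $l_2$, that is $N$ components in all. As a curve of class $l_i$ meets $E$ (of class $l_1+l_2$) in a single point and the matching condition of Proposition \ref{prop:degeneration} forces intersection multiplicity one on both sides of each node over $E$, each component of $C_0'$ is glued to $\overline f(C_1)$ at exactly one $q_j$; since both counts equal $N$, this is a bijection between the components of $C_0'$ and the points $q_1,\dots,q_N$. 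Through each $q_j$ there pass a unique curve of class $l_1$ and a unique curve of class $l_2$, so $\overline f'$ is entirely determined by the subset of $\{q_1,\dots,q_N\}$ carrying an $l_1$-component, a subset that necessarily has cardinality $k$. This yields the upper bound $|\CC_{\overline f}|\le\binom{N}{k}$.

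Conversely, I would realize every $k$-element subset. Each of the $\binom{N}{k}$ configurations built above is a connected stable map of arithmetic genus $g$ (the $N$ rational tails form a tree over the irreducible $C_1$) meeting $E$ transversally with all matching multiplicities equal to one, hence it is smoothable: the symplectic sum gluing underlying Propositions \ref{thm:degen} and \ref{prop:degeneration} produces from it a genuine element of $\CC^\C(d,\x(t),W,J_t)$ whose limit is this configuration, and distinct subsets yield distinct limits. This is precisely the argument of \cite[Corollary 18]{BP14}, which applies verbatim here since, as already noted for Lemma \ref{lem:homology}, neither the genus $g$ nor the auxiliary spheres $W$ play any role once Lemma \ref{lem:finite relative} is used in place of its genus-$0$ counterpart. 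Combining the two bounds gives $|\CC_{\overline f}|=\binom{d\cdot[E]+2k}{k}$.

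The step I expect to be the main obstacle is this converse (existence) direction. The structural, upper-bound half is immediate bookkeeping from Proposition \ref{prop:degeneration}, but the matching lower bound requires guaranteeing that every combinatorially prescribed nodal curve genuinely arises as a limit of smooth curves in the family $\mathcal Z\to D$; this rests on the transversality of the configuration and on the gluing theorem behind the symplectic sum formula rather than on any further enumeration.
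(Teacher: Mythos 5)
Your proposal is correct and follows essentially the same route as the paper, which proves this corollary by invoking \cite[Corollary 18]{BP14} with the remark that neither the genus $g$ nor the spheres in $W$ affect the argument once Lemma \ref{lem:finite relative} replaces its genus-$0$ counterpart. Your bookkeeping (transversal intersection in $N=d\cdot[E]+2k$ points, the forced splitting of $C_0$ into $k$ curves of class $l_1$ and $d\cdot[E]+k$ of class $l_2$, and the bijection with attaching points) together with the appeal to the gluing theorem for the existence direction is precisely the content of that cited proof.
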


\subsubsection{Proof of Theorems \ref{thm:relations 1} and \ref{thm:relations 2}}
We proved these theorems
using the obvious equivariant version of Section \ref{sec:sympl sum},
 which exists by 
\cite[Lemma 2.14]{Teh10} and Remark \ref{rem:real g}.
In the case when $g=0$, the curve $E$ is the only element of $U$, and
$V$ is empty, Theorem 
\ref{thm:relations 1} is exactly \cite[Theorem 2.5(1)]{BP14}.
One simply has to note that since $\R E\cap L=\emptyset$, all
intersection points of $\overline f(\overline C_1)$ and $E$ are
$\tau_X$-conjugated for $\overline f\in \CC^\C(d,0,\x(0),W,J_0)$.
In particular we must have
$$
\overline f_*[C_1] = d -  2k[E] \quad \textrm{and} \quad
\overline f_*[C_0]= 2kl_1 + (d\cdot [E] +2k)l_2 .
$$
with $k\in\Z_{\ge 0}$.

In the case when $g=0$, the curve $E$ is the only element of $V$, and $U$ is empty, Theorem
\ref{thm:relations 2} is exactly \cite[Theorem 2.5(2)]{BP14}.

In general, 
Theorems \ref{thm:relations 1} and \ref{thm:relations 2} are obtained by the
straightforward adaptation of the proof
of \cite[Theorem 2.5]{BP14} replacing  \cite[Section 3.2]{BP14} by
Section \ref{sec:sympl sum}. 
\hfill \smiley

\subsubsection{Proof of Theorem \ref{thm:relative}}
By induction, it follows from Corollary \ref{cor:reduction} and
Theorem \ref{thm:relations 2} that the numbers
$W^{U,V}_{X_\R,L,L_0,F}(d;\underline r,s)$ can be expressed in terms of 
the numbers $W^{\emptyset,\emptyset}_{\overline X_\R,\overline L,\overline
  L_0,F}(d;\underline r,s)$. By definition we have
$$W^{\emptyset,\emptyset}_{\overline X_\R,\overline L,\overline
  L_0,F}(d;\underline r,s)= W_{\overline X_\R,\overline L,F+[\overline L_0]}(d;\underline r,s),$$
so Theorem \ref{thm:relative} now follows from
our hypothesis.
\hfill\smiley

\subsubsection{Proof of Theorem \ref{thm:main1}}\label{sec:proof main}
According to Lemma \ref{lem:sphere invariant}, we have $[S]\cdot d=0$,
hence Theorem \ref{thm:relations
 1} and Corollary \ref{cor:reduction} give
$$ W_{Y_\R,L,F}(d;\underline r,s) = \sum_{k,l\ge 0} (-1)^l \ 2^k
\ \left(
\binom{k+l}{k} + 
\binom{k+l-1}{k} 
\right)\ W_{X_\R,L,F}(d-(k+2l)[S];\underline r,s).$$
The total coefficient of the invariant  $W_{X_\R,L,F}(d-i[S];\underline r,s)$
appearing in this sum is equal to $1$ if $i=0$, and is equal to 
\begin{align*}  \sum_{k+2l=i} (-1)^l \ 2^k
\ \left(
\binom{k+l}{k} + 
\binom{k+l-1}{k} 
\right)=& \sum_{k+2l=i} (-1)^l \ 2^k
\ 
\binom{k+l}{k} - \sum_{k+2l=i-2} (-1)^l \ 2^k
\ 
\binom{k+l}{k}
\\ =& \ u_{i} -u_{i-2}
\end{align*}
otherwise, where
$$u_i=\sum_{k+2l=i} (-1)^l \ 2^k
\ 
\binom{k+l}{k}.$$
 By  Pascal's rule, we have
$$ u_{i+2}=2u_{i+1}-u_{i},$$
hence we deduce from the values $u_0=1$ and $u_1=2$ that
$$u_i=i+1\qquad \forall i\ge 0, $$
and the theorem is proved.
\hfill\smiley

\section{Applications to real algebraic rational surfaces}\label{sec:rational surf}

From now on, we restrict ourselves to the study of real symplectic
manifolds that are deformation equivalent to a
real algebraic rational surface.
We refer for
example to \cite{Sil89,Kol97,DK,DegKha02} for an account on real
algebraic rational surfaces.
 Their classification up to deformation has been established in
\cite{DegKha02}. In particular, any two real algebraic $\R$-minimal rational
surfaces with a non-empty real part are deformation equivalent if and
only if their are deformation equivalent as complex algebraic surfaces
and if their real part are homeomorphic.
Furthermore, it follows from  this classification and Example \ref{ex:gluing real part}
that the  surgery of a real algebraic rational
surfaces along a real Lagrangian sphere contained in $\R X$ remains
deformation equivalent to a real algebraic rational surface.
Recall also that it follows from 
\cite[Corollary 2.6 and Section 4]{BP14} that any genus 0 Welschinger
invariant $W_{X_\R,L,F}(d;s)$ of a real rational algebraic surface $X_\R$ is equal, up to
a well defined sign, to 
$W_{X_\R,L,[R]}(d;s)$, where
$R\subset \R X\setminus L$ only depends on $L$ and $F$.

\medskip
By the blow-up of a real algebraic rational surface $X_\R$, we mean
the blow-up of $X_\R$ at a finite number of real points and pairs of
complex conjugated points, equipped with the real structure that turns
the blowing down map into a real map.

\begin{prop}\label{cor:spheres}
Let $X_\R$ be a real algebraic rational surface with a non-empty real part, and $L$ be a 
connected component of $\R X$  such that $\R X\setminus L$ is a
disjoint union of spheres. Then by  finitely many successive applications of Theorem
\ref{thm:main1}, all genus 0 Welschinger
invariants $W_{X_\R,L,F}$ can be
computed out of genus 0
Welschinger
invariants of either a blow-up of the real projective plane or a
blow-up 
of a real
quadric in $\CP^3$.
\end{prop}
\begin{proof}
As mentioned above, we may suppose that $F=[R]$ where  $R$
is
the union of some
connected components of $\R X\setminus L$. 
Since a
connected component of $\R X\setminus L$ is a real Lagrangian sphere, it is
orthogonal to $[R]$ in $H_2(X\setminus L;\Z/2\Z)$. Hence by 
applying Theorem \ref{thm:main1}  to $W_{X_\R,L,[R]}(d;s)$ and by
 successive surgeries along all spheres in  $\R X\setminus L$,
 we are reduced to the case when $\R X$ is connected.
It follows from \cite[Main Theorem]{DegKha02} that a real
algebraic rational
surface with a connected real part is  deformation equivalent 
to either a blown-up real projective plane or a  blown-up real
quadric in $\CP^3$, so the statement is proved.
\end{proof}

Genus 0 Welschinger invariants of  any
blow-up of the real projective plane, and of any blow-up of a real
quadric in $\CP^3$ are computed in \cite{HorSol12}. Hence combining
Proposition \ref{cor:spheres} and \cite{HorSol12}, one can compute any
genus 0 Welschinger
invariant $W_{X_\R,L,F}$ as soon as $\R X\setminus L$ is a disjoint
union of spheres. For example, genus 0 Welschinger invariants of
$\R$-minimal real conic bundles can be deduced from Welschinger invariants
of a quadric ellipsoid in $\CP^3$ blown-up at several  pairs of complex
conjugated points.

Analogously, one can reduce the computation of genus 0 Welschinger invariants
of del~Pezzo surfaces to the case when the real part of the surface
has at most two connected components. These latter cases have been
covered in \cite{Bru14}. 
\begin{prop}\label{cor:dp1 2 comp}
Let $X_\R$ a real algebraic del~Pezzo surface, and $L$ be a 
connected component of $\R X$.
 Then by  finitely many successive applications of Theorem
\ref{thm:main1},  all genus 0 Welschinger
invariants $W_{X_\R,L,F}$ can be computed out of genus 0  Welschinger
invariants of a real algebraic del~Pezzo surface of the same degree
with a real part consisting of at most two connected components.
\end{prop}
\begin{proof}
It follows from the classification of real algebraic del~Pezzo surfaces
that if $\R X$ has three or more connected components, then 
at  most one of them is not
homeomorphic to a sphere. So the proof is analogous to the proof
of Proposition \ref{cor:spheres}.
\end{proof}

We can  generalize Propositions \ref{cor:spheres} and
\ref{cor:dp1 2 comp} to relative Welschinger invariants as follows. 

\begin{thm}\label{thm:cor 2 cc}
Let $X_\R$ be a real algebraic rational surface  with a
disconnected real part, and let $L$ be a
 connected component of $\R X$. 
 Then  by  finitely many successive applications of Theorems
\ref{thm:main1} and \ref{thm:relations 2}, all absolute genus 0 Welschinger
invariants $W_{X_\R,L,F}$ can be computed out of  relative genus 0 Welschinger
invariants of a blow-up $Y_\R$ of the 
$\R$-minimal real conic bundle with $2$ spheres as real components.

Furthermore by  finitely many successive applications of Theorem
\ref{thm:main1}, all absolute genus 0 Welschinger
invariants $W_{X_\R,L,0}$ and  $W_{X_\R,L,[\R X\setminus L]}$
 can be respectively computed out of the absolute genus 0 Welschinger
invariants 
$W_{Y_\R,L,0}$ and  $W_{Y_\R,L,[\R Y\setminus L]}$.
\end{thm}
\begin{proof}
The case when $\R X$ has two connected components  follows
from the classification of real algebraic rational surfaces up to
deformations \cite[Main Theorem]{DegKha02}. Hence we suppose now that
$\R X$ has a least three connected components.
  As above, 
it is enough to consider the case when 
$F$ is realized by
the union $R$ of some connected components of
$\R X\setminus L$. 
It follows again
from 
\cite[Main Theorem]{DegKha02} that
$\R X$ can be degenerated to a real algebraic  rational nodal
surface $Z_\R$ such that $L$ is contained in the non-singular locus
of $Z_\R$, and such that $\R Z\setminus L$ is connected.
The non-singular real algebraic rational surface $Y_\R$ obtained by
blowing up all nodes of $Z_\R$ has a real part consisting of exactly
two connected components.
Hence
it follows from the
classification of real algebraic rational surfaces that $Y_\R$ is, up to
deformation,  
a blow-up of the 
$\R$-minimal real conic bundle with $2$ spheres as real components.
Thanks to Theorems \ref{thm:main1} and  \ref{thm:relations 2} and Example \ref{ex:gluing real part},
 all genus  0 Welschinger invariants $W_{X_\R,L,[R]}$ can be expressed
in terms of genus 0 Welschinger invariants of $Y_\R$ relative to the vanishing
cycles of the degeneration of $X_\R$ to $Z_\R$, so the first statement is proved.
The statement about $W_{X_\R,L,0}$ and  $W_{X_\R,L,[\R X\setminus
    L]}$ now follows from Theorem \ref{thm:main1b}.
\end{proof}
\begin{rem}
Theorem \ref{thm:cor 2 cc} and its proof 
can of course be generalized to Welschinger
invariants of higher genus. We decided nevertheless 
to restrict to the genus 0 case
since it makes the statement much shorter.
\end{rem}
\begin{cor}
Let $X_\R$ be a real algebraic rational surface  with a
disconnected real part,  let $L$ be a
 connected component of $\R X$, and let $Z_\R$ be the real algebraic
 rational surface  obtained by blowing up $X_\R$ at some point
 $p\in\R X\setminus L$. 
Then the genus 0 Welschinger invariants  $W_{Z_\R,L,0}$ and  $W_{Z_\R,L,[\R
    Z\setminus L]}$ do not depend on the connected component of 
$\R X\setminus L$ containing $p$.
\end{cor}

\section{A few concrete computations}\label{sec:concrete}
Here we illustrate  Theorem \ref{thm:main1} 
 with some explicit computations of absolute genus 0 Welschinger invariants.
Again, we refer for
example to \cite{Sil89,Kol97,DK,DegKha02}) for
 the classification of real algebraic rational surfaces.

\subsection{Real cubic surface with two real components}
Let $Y_\R$ be a non-singular real algebraic cubic surface in $\CP^3$
whose real part is homeomorphic to the disjoint union of a sphere $S$ 
and a real projective plane $\RP^2$, and let $X_\R$ be  the  surgery of
$Y_\R$ along $S$. 
As a first and  easy  example of application of Theorem \ref{thm:main1},
we show how to compute absolute Welschinger invariants of $Y_\R$ out
of those to $X_\R$.

By Example \ref{ex:cubic surface} and 
the classification of real cubic surfaces, we may assume that  $X_\R$
is  $\CP^2$ blown up in three pairs of complex conjugated
points.
Let us denote by $E_1,\ldots,E_6$ the six corresponding exceptional
divisors, and let $D$ be the pull back of a line in $\CP^2$ not
passing through the six blown up points.
Without loss of generality, we may assume that
the following identities hold in $H_2(X;\Z)$:
$$c_1(X)=3[D]-\sum_{i=1}^6[E_i] \qquad\mbox{and}\qquad
[S]=2[D]-\sum_{i=1}^6[E_i] . $$
By Theorem  \ref{thm:main1} we have
$$W_{Y_{\R},\RP^2,F}(c_1(X);s)=W_{X_{\R},\RP^2,F}(c_1(X);s)
+2 W_{X_{\R},\RP^2,F}([D];s).$$
It is well known that
\begin{align*}
&W_{X_{\R},\RP^2,0}(c_1(X);s)=W_{\CP^2,\RP^2,0}(3[D];s+3)=2-2s,
\\&W_{X_{\R},\RP^2,0}([D];s)=W_{\CP^2,\RP^2,0}([D];s)=1.
\end{align*}
Combining with the identity
\begin{equation}\label{equ:sign W}
W_{X_{\R},\RP^2,[S]}(d;s)=(-1)^{\frac{d\cdot
    [S]}2}W_{X_{\R},\RP^2,0}(d;s),
\end{equation}
we obtain
$$W_{Y_{\R},\RP^2,0}(c_1(X);s)=4-2s\qquad
\mbox{and}\qquad
W_{Y_{\R},\RP^2,[S]}(c_1(X);s)=-2s.$$

\begin{rem}\label{rem:rem1}
By \cite[Theorems 1.2(1) and 1.2]{BP14},
the vanishing of $W_{Y_{\R},\RP^2,[S]}(c_1(X);0)$ is actually a
general fact. Hence we could also have used this general vanishing
result to deduce the value of $W_{X_{\R},\RP^2,0}(c_1(X);0)$ out of the
 value 
$W_{\CP^2,\RP^2,0}([D];s)=1$ and the equation
$$0=W_{X_{\R},\RP^2,0}(c_1(X);0)
-2 W_{\CP^2,\RP^2,0}([D];0) $$
given by the combination of  $(\ref{equ:sign W})$, Theorem
\ref{thm:main1}, and \cite[Theorems 1.1(1) and 1.2]{BP14}.
\end{rem}

By Theorem  \ref{thm:relations 1} we have
$$W_{Y_{\R},\RP^2,F}(2c_1(X);s)=W_{X_{\R},\RP^2,F}(2c_1(X);s)
+2W_{X_\R,\RP^2,F}(4[D]-\sum_{i=1}^6[E_i];s) +2 W_{X_\R,\RP^2,F}(2[D];s). $$
The following values are taken from \cite{IKS2,Br8} and \cite[Table
  4]{Bru14}.
\begin{center}
\begin{tabular}{ |c|c| c| c|}
\hline $s  $  & $W_{X_{\R},\RP^2,0}(2c_1(X);s)$&
  $W_{X_{\R},\RP^2,0}(4[D]-\sum_{i=1}^6[E_i];s)$ & $W_{X_{\R},\RP^2,0}(2[D];s)$
\\ & & & 
\\\hline
  $0$   &  78 & 40 & 1
\\\hline 
$1$ &  30  &  16& 1
\\\hline
$2$ &  22 & 0 & 1
\\\hline
\end{tabular}
\end{center}
Combining this with $(\ref{equ:sign W})$ we obtain
\begin{center}
\begin{tabular}{ |c|c| c| }
\hline $s  $  & $W_{Y_{\R},\RP^2,0}(2c_1(X);s)$ & $W_{Y_{\R},\RP^2,[S]}(2c_1(X);s)$
\\ & & 
\\\hline
  $0$   &  160 & 0
\\\hline 
$1$ &  64 & 0
\\\hline
$2$ &  24 & 24
\\\hline
\end{tabular}
\end{center}
in accordance with \cite[Table 4]{Bru14}.

\begin{rem}\label{rem:rem2}
Again, one could have used  \cite[Theorems 1.1(1) and  1.2]{BP14},
 to compute the invariants 
$W_{X_{\R},\RP^2,0}(2c_1(X);s)$ with $s<2$ out of the
 values
$W_{\CP^2,\RP^2,0}(2[D];s)$,
 $W_{\CP^2,\RP^2,0}(4[D];s+3)$, and the equation
$$0=W_{X_{\R},\RP^2,0}(2c_1(X);0)
-2W_{\CP^2,\RP^2,0}(4[D];s+3) +2 W_{\CP^2,\RP^2,0}(2[D];s) $$
given by the combination of $(\ref{equ:sign W})$, Theorem
\ref{thm:main1}, and \cite[Theorems 1.1(1) and 1.2]{BP14}.
\end{rem}

\subsection{$\R$-minimal conic bundles}
Given $n\in\Z_{>0}$, let  $X_{n}$  be the  conic bundle
 given in an affine
chart by the real equation
$$y^2+z^2=-\prod_{i=1}^{2n}(x- a_i)$$
where $a_1<a_2\ldots<a_{2n}$ are distinct real numbers. The conic
bundle structure is given by the map  $\rho:X_{n}\to \C P^1$ that
forgets the $(y,z)$ coordinates.
Restricting 
 the standard complex conjugation on $\C^3$
 to $X_n$ turns this latter into a
 real algebraic surface $X_{n,\R}$, and
turns $\rho$ into a real map. Note that $X_{n,\R}$ is $\R$-minimal if and
only if $n\ge 2$.
The real part of $X_{n,\R}$ is composed of the $n$ spheres
$S_{i}=\rho^{-1}([a_{2i-1},a_{2i}])\cap \R^3$.
Next lemma is a straightforward combination of
 \cite[Propositions 4.2, 4.3, and 4.5, and Corollary 2.6]{BP14}.
\begin{lemma}
Given $F\in H^{\tau_X}_2(X_{n,\R}\setminus S_1;\Z/2\Z)$, there
exists $R\subset \R X_{n,\R}\setminus S_1$ a set of connected components 
of $\R X_{n,\R}\setminus S_1$  such that
$$ |W_{X_{n,\R},s_1,F}(d;s)|=|W_{X_{n,\R},S_1,[R]}(d;s)| \qquad
\forall d\in H_2(X_{n,\R};\Z) \mbox{ and } \forall s\in \Z_{\ge 0}.$$
\end{lemma}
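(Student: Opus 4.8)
The plan is to reduce an arbitrary class $F\in H^{\tau_X}_2(X_{n,\R}\setminus S_1;\Z/2\Z)$ to a class of the form $[R]$, with $R$ a union of connected components of $\R X_{n,\R}\setminus S_1$, at the cost only of a global sign on the Welschinger invariant; passing to absolute values then yields the claimed equality. The whole argument consists in assembling the sign and reduction relations of \cite[Propositions 22, 23, and 25, and Corollary 8]{BP14}.

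First I would fix a generating set of $H^{\tau_X}_2(X_{n,\R}\setminus S_1;\Z/2\Z)$ adapted to the conic bundle structure $\rho:X_{n,\R}\to\CP^1$. The classes $[S_2],\dots,[S_n]$ of the real spheres forming the components of $\R X_{n,\R}\setminus S_1$ provide the $[R]$-part, while the remaining $\tau_X$-invariant classes are carried by the fiber class $f=[\rho^{-1}(\mathrm{pt})]$ and by the $2n$ reducible fibers over $a_1,\dots,a_{2n}$. A general fiber is a smooth real conic, that is, a real symplectic sphere of self-intersection $0$, and each reducible fiber splits into a pair of $\tau_X$-conjugated $(-1)$-spheres, so all of these auxiliary classes are realized by unions of real symplectic spheres of self-intersection at least $-1$.

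Next I would invoke the sign relations. By \cite[Proposition 22]{BP14} --- the relation already used in \eqref{equ:sign W} --- together with \cite[Propositions 23 and 25]{BP14}, replacing $F$ by $F+[\Sigma]$ for such a class $[\Sigma]$ multiplies $W_{X_{n,\R},S_1,F}(d,s)$ by the sign $(-1)^{d\cdot[\Sigma]/2}$, uniformly in $\x$, $J$, $d$, and $s$. Organizing these substitutions with the help of \cite[Corollary 8]{BP14}, I can successively strip from $F$ all contributions but a class $[R]$ with $R\subset\R X_{n,\R}\setminus S_1$ a union of connected components. Since each substitution alters $W_{X_{n,\R},S_1,F}(d,s)$ only by $\pm1$, the invariants attached to $F$ and to $[R]$ agree up to sign, and taking absolute values gives the equality for all $d\in H_2(X_{n,\R};\Z)$ and all $s\in\Z_{\ge 0}$ simultaneously.

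The main obstacle is the bookkeeping behind the second step: one must check that the fiber class and the classes of the reducible-fiber components genuinely satisfy the hypotheses of \cite[Propositions 22, 23, and 25]{BP14}, i.e.\ that the chosen representatives are $\tau_X$-invariant unions of symplectic spheres of self-intersection at least $-1$, placed in the position relative to $S_1$ required by those statements, so that each sign relation applies verbatim. Once this compatibility is established, the combination of the cited results is indeed straightforward.
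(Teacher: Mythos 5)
Your proposal is correct and follows essentially the same route as the paper, which gives no detailed argument at all: it states the lemma as ``a straightforward combination of \cite[Propositions 22, 23, and 25, and Corollary 8]{BP14}'', exactly the results you assemble (reducing $F$ modulo classes realized by $\tau_X$-invariant unions of symplectic spheres of self-intersection at least $-1$ --- general and reducible fibers of the conic bundle --- at the cost of a sign such as $(-1)^{d\cdot[\Sigma]/2}$, leaving only a union $R$ of components of $\R X_{n,\R}\setminus S_1$). Your explicit identification of the generating classes and your flagged bookkeeping point are precisely the verification the paper leaves implicit.
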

That is to say, the computation of all
 Welschinger invariants of  $X_{n,\R}$ can be reduced to the
 case when $F$ is realized by
the union of some
 connected components of $\R X_{n,\R}\setminus S_1$. 

Now we describe how to  construct recursively the real
varieties $X_{n,\R}$, up to deformation, using real surgeries along
Lagrangian spheres.
Let $X_{0,\R}=(\CP^1\times\CP^1,\omega_{FS}\oplus \omega_{FS},\tau_{S^1,0})$ be the quadric hyperboloid in $\CP^3$,
and
let $\rho: X_{0,\R}\to \CP^1$ be the (real) projection to the first
factor. 
Consider the blow-up of $X_{0,\R}$ at
two complex conjugated points of a fiber $\Phi$ of $\rho$.
Then $X_{1,\R}$ is the  surgery of  $X_{0,\R}$ along the strict
transform of $\Phi$. 
Suppose now that we have constructed  $X_{n,\R}$, and let $\Phi$ be a real
fiber of $\rho$ with  $\R \Phi=\emptyset$. 
Then $X_{n+1,\R}$ is the  surgery of  $X_{n,\R}$ along the strict
transform of $\Phi$ under the blow-up of $X_{n,\R}$ at
two complex conjugated points of $\Phi$. 

This recursive description of $X_{n,\R}$ exhibit the underlying
complex surface as the blow-up of 
$X_{0,\R}$ at $2n$ points, and
shows 
that one can denote the corresponding exceptional divisors 
$E_1,\ldots, E_{2n}$ such that 
\begin{equation}\label{equ:class sphere Xn}
[S_i]=[\Phi]-[E_{2i-1}]-[E_{2i}] \in H_2(X_n;\Z) 
\end{equation}
for some orientation on $S_i$, where $\Phi$ is a fiber of $\rho$.
Denoting by $\widetilde{c_1(X_0)}$ the pull back of $c_1(X_0)$ by this
sequence of blow-ups, we have
$$c_1(X_n)=\widetilde{c_1(X_0)}-\sum_{i=1}^{2n}[E_i].$$
In Proposition \ref{prop:conic c1+bC} we compute the invariants
$ W_{X_{n,\R},S_1,[R]}(c_1(X_n)+b[\Phi];s)$ recursively starting from the
case $n=1$, which is treated in next proposition.

\begin{prop}\label{prop:conic c1+bC n=1}
Let $b\in\Z_{\ge -2}$. If $s\in\{0,\ldots, b+1\}$, then we have
$$ W_{X_{1,\R},S_1,0}(c_1(X_1)+b[\Phi];s)= 2^{2b+2-s}.$$
If $s=b+2$, then we have
$$W_{X_{1,\R},S_1,0}(c_1(X_1)+b[\Phi];b+2)=\left\{\begin{array}{cl}
2^{b+1} &\mbox{if } b \mbox{ is odd},
\\ 0 & \mbox{otherwise}.
\end{array}\right. $$

\end{prop}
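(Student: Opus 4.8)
The plan is to compute these numbers by identifying $X_{1,\R}$ with a surface whose genus $0$ Welschinger invariants are already available, rather than by a direct application of the surgery formulas. First I would record the relevant homological data. In $H_2(X_1;\Z)$ one has $c_1(X_1)=\widetilde{c_1(X_0)}-E_1-E_2$, the class $[\Phi]$ of a generic fibre of the conic bundle $\rho$, and $[S_1]=[\Phi]-E_1-E_2$ with $[S_1]^2=-2$. Writing $d=c_1(X_1)+b[\Phi]$, a short intersection computation gives $c_1(X_1)\cdot d=2b+6$ (and $d\cdot[S_1]=0$), so the number of point conditions is $c_1(X_1)\cdot d-1=2b+5=r+2s$; this forces $0\le s\le b+2$, with $r=1$ exactly in the extremal case $s=b+2$, which is precisely where the stated value changes. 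Note that since $\R X_1=S_1$ is the only real component, applying the surgery formulas with $L=S_1$ can only relate $W_{X_{1,\R},S_1,0}$ to invariants of real symplectic manifolds with \emph{additional} real components (any simplifying surgery would have to cut a circle inside $L$, which is not allowed), so these base numbers must be obtained by other means.

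Instead I would use deformation invariance (Theorem \ref{thm:absolute W}) together with the classification of \cite{DegKha02}: a real rational surface that is a degree $6$ del~Pezzo with connected real part homeomorphic to $S^2$ is deformation equivalent to the quadric ellipsoid blown up at one pair of complex conjugated points, and $X_{1,\R}$ is of this type. Under the blow-down $\pi$ of the conjugated exceptional pair onto the ellipsoid $Q'$ one computes $[\Phi]=\pi^\ast(h_1+h_2)-\mathcal E_1-\mathcal E_2$, hence $d=(2+b)(h_1+h_2)-(1+b)(\mathcal E_1+\mathcal E_2)$. Thus the enumeration reduces to counting real rational curves on a blow-up of the quadric ellipsoid at a conjugated pair, exactly the class of computations carried out in \cite{HorSol12}; reading off those values (or, alternatively, running a recursion in $b$ built from the conic bundle degeneration and Corollary \ref{lem:lem2}) should yield $W_{X_{1,\R},S_1,0}(c_1(X_1)+b[\Phi],s)=2^{2b+2-s}$ in the range $s\le b+1$. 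The factor-$2$ behaviour reflects that replacing one pair of conjugated points by two real points doubles the count, which is consistent with the recursive shape $W(b,s)=2\,W(b,s+1)$ of the claimed formula.

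The main obstacle will be the extremal case $s=b+2$, where only $r=1$ real point is imposed and the answer becomes parity-sensitive: $2^{b+1}$ for $b$ odd and $0$ for $b$ even. I expect the vanishing for even $b$ to be a genuine sign-cancellation phenomenon rather than a formal consequence of the binomial bookkeeping, and to require a dedicated argument in the spirit of the vanishing results of \cite{BP14} (cf. Remark \ref{rem:rem1}), or an honest analysis of the sign of the extremal term in the recursion. The remaining work --- matching the orientation and node-counting conventions (the $(L,F)$-mass with $F=0$) between $X_{1,\R}$ and the model surface, and checking that the blow-up/blow-down introduces no spurious contributions from the exceptional curves --- is routine but must be done carefully to pin down the powers of $2$ exactly.
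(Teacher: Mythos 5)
Your reduction is set up correctly --- the intersection bookkeeping ($c_1(X_1)\cdot d=2b+6$, $r+2s=2b+5$, $r=1$ exactly when $s=b+2$), the identification of $X_{1,\R}$ with the quadric ellipsoid blown up at a conjugate pair, and the class computation $d=(2+b)(h_1+h_2)-(1+b)(\mathcal E_1+\mathcal E_2)$ all check out, and the paper itself acknowledges this route as viable (it remarks that one could use the methods of \cite{Bru14}). But as it stands the attempt has a genuine gap: it proves neither case of the proposition. In the range $s\le b+1$ the value $2^{2b+2-s}$ is only asserted to be what one ``should'' read off from \cite{HorSol12}, with no extraction of a closed formula and no matching of conventions; and the extremal case $s=b+2$, which carries the only genuinely new content (the parity dichotomy), is explicitly deferred to ``a dedicated argument'' that is never supplied. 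A citation-plus-deformation-invariance argument could in principle be completed, but the key counting steps are missing from the proposal.

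Moreover, your guess about the mechanism of the vanishing for even $b$ is off the mark. The paper's actual proof is a direct real symplectic-sum computation: $X_{1,\R}$ is degenerated into $\widetilde{X_{0,\R}}$ (the hyperboloid blown up at a conjugate pair on a fiber) glued to $(\CP^1\times\CP^1,\omega_0,\tau_{S^1,2})$ along $E$, with $2b+4$ points on the first piece and one point $p_0$ on the second. Limits tangent to $E$ do contribute in cancelling pairs (by \cite[Proposition 20]{BP14}), but for the surviving transverse limits the component $C_1$ splits as $C'\cup C''$ with classes $l_1+b_1l_2$ and $l_1+(b+1-b_1)l_2$. When $r=1$ the curves $C'$ and $C''$ are forced to be complex conjugate, so $b_1=b+1-b_1$, which has no integer solution for $b$ even: the vanishing is \emph{emptiness} of the relevant moduli, not a sign cancellation among surviving real curves. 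For $b$ odd one gets exactly $2^{s-1}=2^{b+1}$ conjugation-invariant distributions of the point pairs, all counted with sign $+1$ since the components are smooth by adjunction and $f(C')$ meets $\R\widetilde{X_{0,\R}}$ in an even number of points; for $r\ge 3$ the analogous count of real splittings gives $2^{r+s-3}=2^{2b+2-s}$. So the paper's argument settles both cases simultaneously by the same degeneration, whereas your recursion-based fallback would still need precisely this analysis of the extremal term, and your heuristic $W(b,s)=2\,W(b,s+1)$ visibly fails at $s=b+1\to b+2$ when $b$ is even.
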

\begin{proof}
The real manifold $X_{1,\R}$ is the quadric
ellipsoid blown up at a pair of complex conjugated points. Hence  one could 
use the methods described in
\cite{Bru14} to prove the proposition.
We provide an alternative proof that illustrates applications of
the method exposed
in \cite{BP14}. Here we use notations introduced in Sections \ref{sec:sphere} and
 \ref{sec:sympl sum}. We also define $r=2b+5-2s$.

Let $\widetilde{X_{0,\R}}$ be the blow-up of $X_{0,\R}$ at two
complex conjugated points on a fiber $\Phi$ of $\rho$, and let $E$ be the
strict transform of $\Phi$. As explained above, 
$X_{1,\R}$ is, up to deformation,
 obtained as the real symplectic sum $\pi:\mathcal Z\to
D$
of $X_{0,\R}$ and 
$(\CP^1\times\CP^1,\omega_{0},\tau_{S^1,2})$ along
$E$.
Now
 choose $\x: D\to \mathcal Z$  a  set of $r$ real 
sections and $s$ pairs of conjugated real sections
such that  
$$|\x(0)\cap \widetilde{X_{0}}|=2b+4\qquad\mbox{and}\qquad
|\x(0)\cap (\CP^1\times\CP^1)|=1.$$
Let $\overline f:\overline C\to  \widetilde{X_{0}}\cup \CP^1\times\CP^1$
be  an element of  $\CC^\C(c_1(X_1)+b[\Phi],0,\x(0),\emptyset,J_0)$.
According to \cite[Proposition 3.7]{BP14},
 we have
$$\overline f_*[C_1]=c_1(X_1)+ b[\Phi] -[E]= 
\widetilde{c_1(X_0)}+ (b-1)[\Phi]
\in H_2(X_1;\Z).$$
According to \cite[Proposition 3.7]{BP14}, we have the two following
possibilities:
\begin{enumerate}
\item[$(i)$] $C_1$ is real and irreducible, and $\overline f(C_1)$ is tangent
  to $E$ at a real point. In this case, it follows from
  \cite[Proposition 3.10]{BP14} that the contribution to 
$ W_{X_{1,\R},S_1,0}(c_1(X_1)+b[\Phi];s) $
of the two deformations of $\overline f$ cancel each other.

\item[$(ii)$] $C_1$ is real and has two irreducible components $C'$ and
  $C''$. 
In this case,
$\overline f$ deforms into a unique real curve in
$\CC^\C(c_1(X_1)+b[\Phi],\x(t),0,\emptyset,J_t)$,  and
  $\overline f(C_0)$ intersects $E$ transversely in two 
  points.
\end{enumerate}
Hence it remains to estimate the contribution 
to 
$ W_{X_{1,\R},S_1,0}(c_1(X_1)+b[\Phi];s) $ of the maps in case $(ii)$ above.
None of the curves
 $\overline f(C')$ nor $\overline f(C'')$ 
intersects any of the two exceptional divisor coming from the blow-ups
of $X_{0,\R}$, therefore there exists $b_1\in\{0,\ldots,b+1\}$ such that
$$\overline f(C')=l_1 +b_1l_2 \qquad
\mbox{and}\qquad
\overline f(C'')=l_1 +(b+1-b_1)l_2$$
(we still denote by $l_i$ the strict transform the class $l_i$ under
the blow-up map).
The maps $\overline f_{|C'}$ and $\overline f_{|C''}$ can be
constrained respectively by at most $2b_1+1$ and $2b-2b_1+3$ points.
Since $\overline f_{|C_1}$ is constrained by the $2b+4$ points in
$\x(0)\cap \widetilde{X_{0,\R}}$, we deduce that $\overline f_{|C'}$ and $\overline f_{|C''}$ are
constrained respectively by exactly $2b_1+1$ and $2b-2b_1+3$ points.

Suppose that $r\ge 3$. In this case we have $\x(0)\cap
\R\widetilde{X_{0,\R}}\ne\emptyset$, which in particular implies that
both curves $C'$ and $C''$ and
both maps $\overline f_{|C'}$ and $\overline f_{|C''}$ are real.
Let us  choose $p\in \x(0)\cap \R\widetilde{X_{0,\R}}$. 
Without loss of generality, we may assume
that $p\in \overline f(C')$. Since the Gromov-Witten invariant of 
$\CP^1\times\CP^1$ for the class 
$l_1+b_1l_2$ is equal to $1$ for any
$b_1\ge 0$, the map $\overline f$ is determined by the real subset of
$\x(0)\setminus \{p_0,p\}$ that is contained in $\overline
f(C')$. Let $\Im\x(0)$ be the set of pairs of conjugated points of 
$\x_0$.
Given any sets $A\subset \R\x(0)\setminus \{p_0,p\}$ and 
 $B\subset \Im\x(0)$, the pair among the two pairs $(A,B)$ and 
$(\R\x(0)\setminus(A\cup \{p_0,p\}),\Im\x(0)\setminus B)$ for which
the first component has an even cardinal corresponds to a real subset of
$\x(0)\setminus \{p_0,p\}$ that is contained in $\overline
f(C')$ for some map $\overline f$ as above. Hence there exist
exactly
$$2^{r+s-3} $$
such maps.
By the adjunction formula, any 
irreducible $J$-holomorphic curve in $\CP^1\times\CP^1$ realizing the class 
$l_1+b_1l_2$ is smooth. Hence the curve $\overline f(C_1)$ has no
elliptic  real node, and the proposition is proved in the case when
$r\ge 3$.

Suppose now that  $r=1$. In this case  $\R\x(0)\cap
\widetilde{X_{0}}=\emptyset$, and 
there is no possibilities to
decompose $x(0)\cap \widetilde{X_{0}}$ into the disjoint union of
two real subsets of odd cardinality.  As a consequence the two
curves $C'$ and $C''$  and the two 
maps $\overline f_{|C'}$ and $\overline f_{|C''}$ are complex conjugated, 
and each map is
determined by the point in each pair of $\x(0)\setminus p_0$ through
which it passes.
Hence there are 
exactly 
$$2^{s-1}=2^{b+1}$$
such maps $\overline f$ if $b+2$ is odd, and $0$ such maps otherwise. 
Since $\R X_{0,\R}$ is null-homologous in
$H_2(X_0;\Z/2\Z)$, the curve $\overline f(C')$ has an even number of
intersection points with $\R \widetilde{X_{0,\R}}$. Hence the proposition is
proved as well when
$r=1$.
\end{proof}

\begin{prop}\label{prop:conic c1+bC}
Let $n\ge 2$, $b\ge n-3$ and $s\le b-n+3$ be three integer
numbers, and let $R$
be the union 
of some connected components of $\R X_{n}\setminus S_1$. Then we have
$$ W_{X_{n,\R},S_1,[R]}(c_1(X_n)+b[\Phi];s)= \left\{\begin{array}{ll}
2^{2b+2-s} &\mbox{if }R=\emptyset;
\\(-1)^{b+1} \ 2^{b+n-1} &\mbox{if } R=\R X_{n}\setminus S_1 \mbox{ and } s=b-n+3;
\\ 0 & \mbox{otherwise}.
\end{array}
\right.$$
\end{prop}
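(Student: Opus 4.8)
The plan is to induct on $n$, with base case $n=1$ given by Proposition \ref{prop:conic c1+bC n=1}. For the inductive step I would use the recursive construction: $X_{n,\R}$ is the real surgery, along the strict transform $S_n$ of a fiber $\Phi$ with $\R\Phi=\emptyset$, of the blow-up $\widetilde{X_{n-1,\R}}$ of $X_{n-1,\R}$ at a conjugate pair of points $p,\overline p\in\Phi$. Here $S_n$ is a Lagrangian sphere with $\R S_n=\emptyset$ realizing the $\tau$-anti-invariant class $[S_n]=[\Phi]-[E_{2n-1}]-[E_{2n}]$, so that $X_{n,\R}$ plays the role of $Y_\R$ and $\widetilde{X_{n-1,\R}}$ that of $X_\R$ in Theorem \ref{thm:main1}. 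Writing $d=c_1(X_n)+b[\Phi]$ one checks $[R]\cdot[S_n]=0$ and $d\cdot[S_n]=0$, and since $L=S_1$ is disjoint from $S_n$, Theorem \ref{thm:main1} applies and gives
$$W_{X_{n,\R},S_1,[R]}(d,s)=W_{\widetilde{X_{n-1,\R}},S_1,[R]}(d,s)+2\sum_{k\ge 1}W_{\widetilde{X_{n-1,\R}},S_1,[R]}(d-k[S_n],s).$$

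The first task is to collapse this sum. Since $(d-k[S_n])\cdot[E_{2n-1}]=1-k$, computing these absolute invariants (which are $J$-independent by Theorem \ref{thm:absolute W}) with a $\tau$-compatible $J$ for which the $(-1)$-curve $E_{2n-1}$ is holomorphic, positivity of intersections shows that no irreducible curve realizes $d-k[S_n]$ once $k\ge 2$; hence only $k=0,1$ survive. The second task is to blow down $E_{2n-1}\cup E_{2n}$. As $d\cdot[E_{2n-1}]=1$, the $k=0$ curves meet each exceptional divisor once, i.e. pass through the conjugate pair $p,\overline p$, and descend to curves of class $c_1(X_{n-1})+b[\Phi]$ constrained by one additional conjugate pair, so this term descends to a count with $s$ replaced by $s+1$; since $d-[S_n]=\pi^*\bigl(c_1(X_{n-1})+(b-1)[\Phi]\bigr)$ is a pullback, the $k=1$ curves avoid the exceptional locus and descend to curves of class $c_1(X_{n-1})+(b-1)[\Phi]$ with the same $s$.

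The delicate point, which I expect to be the main obstacle, is to follow the mass $m_{L,[R]}(f)=m(f)+[f(C^+)]\cdot[R]$ through this reduction. Setting $R'=R\cap\{S_2,\dots,S_{n-1}\}$, the number $m(f)$ of elliptic nodes in $L=S_1$ and the pairing $[f(C^+)]\cdot[R']$ are unchanged by the blow-down, since the centers lie off $S_1$ and off the spheres of $R'$. The only new contribution is $[f(C^+)]\cdot[S_n]=[f(C^+)]\cdot[\Phi]-[f(C^+)]\cdot([E_{2n-1}]+[E_{2n}])$ when $S_n\in R$. Choosing $\Phi$ real with $\R\Phi=\emptyset$ makes the two points of $f(C)\cap\Phi$ a conjugate pair, so $[f(C^+)]\cdot[\Phi]\equiv 1 \bmod 2$; meanwhile $[f(C^+)]\cdot([E_{2n-1}]+[E_{2n}])$ equals $1$ for the $k=0$ curves (they meet the exceptional locus, with $p,\overline p$ on opposite halves) and $0$ for the $k=1$ curves. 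Thus $[f(C^+)]\cdot[S_n]$ is $\equiv 0$ in the $k=0$ term and $\equiv 1$ in the $k=1$ term, and with $\epsilon=1$ if $S_n\in R$ and $\epsilon=0$ otherwise I obtain the clean recursion
$$W_{X_{n,\R},S_1,[R]}(d,s)=W_{n-1}[R'](b,s+1)+2(-1)^\epsilon W_{n-1}[R'](b-1,s),$$
where $W_{n-1}[R'](b',s'):=W_{X_{n-1,\R},S_1,[R']}(c_1(X_{n-1})+b'[\Phi],s')$.

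It then remains to invoke the inductive hypothesis and analyse the cases. The hypothesis makes $W_{n-1}[R']$ vanish unless $R'=\emptyset$ or $R'=\R X_{n-1,\R}\setminus S_1$, so any $R$ for which $R'$ is a proper nonempty subset gives $0$ at once. For $R'=\emptyset$ the recursion reads $W_{n-1}[\emptyset](b,s+1)\pm 2W_{n-1}[\emptyset](b-1,s)=2^{2b+1-s}\pm 2^{2b+1-s}$, which equals $2^{2b+2-s}$ for $R=\emptyset$ and $0$ for $R=\{S_n\}$. For $R'=\R X_{n-1,\R}\setminus S_1$, the hypothesis (nonzero only at the top value $s'=b'-(n-1)+3$) forces $s=b-n+3$, and the same computation yields $(-1)^{b+1}2^{b+n-1}$ for $R=\{S_2,\dots,S_n\}$ and $0$ for $R=\{S_2,\dots,S_{n-1}\}$. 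Keeping track of the admissible ranges $b'\ge(n-1)-3$ and $s'\le b'-(n-1)+3$ at each use of the hypothesis — and noting that the step $n=2$ must invoke Proposition \ref{prop:conic c1+bC n=1} in its full range up to $s=b+2$ — completes the induction.
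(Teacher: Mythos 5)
Your proof is correct and takes essentially the same route as the paper's: both apply Theorem \ref{thm:main1} to the surgery along the sphere $S_n$ realizing $[\Phi]-[E_{2n-1}]-[E_{2n}]$, reduce (after discarding $k\ge 2$ by positivity and blowing down the conjugate exceptional pair) to the two-term signed recursion $W_{X_{n,\R},S_1,F}(c_1(X_n)+b[\Phi],s)=W_{X_{n-1,\R},S_1,F}(c_1(X_{n-1})+b[\Phi],s+1)\pm 2\,W_{X_{n-1,\R},S_1,F}(c_1(X_{n-1})+(b-1)[\Phi],s)$, and then induct on $n$ from Proposition \ref{prop:conic c1+bC n=1}, including its special value at $s=b+2$. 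The only differences are expository: you compute the sign $(-1)^\epsilon$ by a direct evaluation of $[f(C^+)]\cdot[S_n]$ where the paper invokes \cite[Proposition 22]{BP14}, and you extract the vanishing for $\emptyset\ne R$ and $s<b-n+3$ from the recursion itself where the paper cites \cite[Theorems 1 and 2]{BP14}.
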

\begin{proof}
The fact that $ W_{X_{n,\R},S_1,[R]}(c_1(X_n)+b[\Phi];s)=0$ if
$R\ne\emptyset$ and $s<b+n-3$ follows from \cite[Theorems 1.1 and 1.2]{BP14}.
Given $n\ge 2$,  
Theorem \ref{thm:main1} 
implies that
\begin{equation}\label{equ:rec Xn}
 W_{X_{n,\R},S_1,F}(c_1(X_n)+b[\Phi];s)=
W_{X_{n-1,\R},S_1,F}(c_1(X_{n-1})+b[\Phi];s+1) 
\end{equation}
$$ \qquad  \qquad  \qquad  \qquad  \qquad  \qquad  \qquad  \qquad  \qquad  \qquad +
2W_{X_{n-1,\R},S_1,F}(c_1(X_{n-1})+(b-1)[\Phi];s) .
$$
The proposition in the case when $R=\emptyset$ follows now by
induction on $n$ from 
$(\ref{equ:rec Xn})$ and Proposition \ref{prop:conic c1+bC n=1}.

Let us assume that $R=\R X_{n}\setminus S_1$ and $s=b-n+3$. 
According
to  Remark \ref{rem:two special F},
identity $(\ref{equ:rec Xn})$ becomes
\begin{equation}\label{equ:rec Xn2}
 W_{X_{n,\R},S_1,[\R X_{n,\R}\setminus S_1]}(c_1(X_n)+b[\Phi];s)=
W_{X_{n-1,\R},S_1,[\R X_{n-1,\R}\setminus S_1]}(c_1(X_{n-1})+b[\Phi];s+1) 
\end{equation}
$$ \qquad  \qquad  \qquad  \qquad  \qquad \qquad   \qquad  \qquad  \qquad  \qquad  \qquad -
2W_{X_{n-1,\R},S_1,[\R X_{n-1,\R}\setminus S_1]}(c_1(X_{n-1})+(b-1)[\Phi];s) .
$$
If
$n=2$, then the proposition  follows from 
$(\ref{equ:rec Xn2})$ and  Proposition \ref{prop:conic c1+bC n=1}.
If $n\ge 3$, then the proposition  follows by induction on $n$ from 
$(\ref{equ:rec Xn2})$.

The proof of the proposition when  
$\emptyset\ne R\subsetneq(\R X_{n}\setminus S_1)$ and $s=b-n+3$ is
analogous: 
the case $n=3$ follows from $(\ref{equ:rec Xn})$ and the above computation
of $W_{X_{2,\R},S_1,[\R X_{2,\R}\setminus S_1]}(c_1(X_2)+b[\Phi];s)$, and the
case $n\ge 4$ follows by induction on $n$ from 
$(\ref{equ:rec Xn})$.
\end{proof}

\subsection{Del~Pezzo surfaces of degree 1}

Recall that there exist 11 deformation classes of  real
del~Pezzo surfaces of degree 1. 
In \cite[Example 7.9]{Bru14}, the values of
 $W_{X_{\R},L,[\R X\setminus  L]}(2c_1(X);0)$ are computed for 6
of these deformation classes.
Here we treat   the 5 remaining cases.

Recall also that  the  real  conic bundle $X_{n,\R}$ with 
$n\le 3$
is a del~Pezzo surface of degree $8-2n$, and that $\Phi$
denotes a generic fiber of $X_{n,\R}$. We denote by
$\RP^2_k$ the real blow-up of $\RP^2$ at $k$ points (i.e. we replace
$k$ disjoint disks in $\RP^2$ by $k$ Mobius strips). 
By ``$Y_\R$ is obtained by a  surgery of $X_\R$ along the class
$\gamma$'', we mean that $Y_\R$ is obtained by a  surgery of a
deformation of $X_\R$ for which the class $\gamma$ is realized by a
real Lagrangian sphere. In what follows, the existence of such
deformation is guaranteed by the classification up to deformations of
real algebraic rational surfaces.
We define the
following real del~Pezzo surfaces of degree 1:
\begin{itemize}
\item $Y_{1,\R}$ is the real blow-up of $X_{2,\R}$ at one real point
  on a connected component of $\R X_{2,\R}$ and at two real points
  on the other connected component. Denoting respectively by
  $\widetilde E_1,\widetilde E_2,\widetilde E_3$ the
  three corresponding exceptional divisors, we have
$$H_2^{-\tau_{Y_1}}(Y_{1,\R}; \Z)=\Z c_1(Y_1)\oplus \Z [\Phi] \oplus \Z
  [\widetilde E_1]\oplus \Z [\widetilde E_2]\oplus \Z [\widetilde E_3]\qquad
\mbox{and}\qquad \R Y_{1,\R}=\RP^2\sqcup \RP^2_1.$$

\item $Y'_{1,\R}$ is the real blow-up of $X_{2,\R}$ at three real points
  on a connected component of $\R X_{2,\R}$.
Denoting  by $\widetilde E_1,\widetilde E_2,\widetilde E_3$ the
  three corresponding exceptional divisors, we have
$$H_2^{-\tau_{Y'_1}}(Y'_{1,\R}; \Z)=\Z c_1(Y'_1)\oplus \Z [\Phi] \oplus \Z [\widetilde E_1]\oplus
  \Z [\widetilde E_2]\oplus \Z [\widetilde E_3]\qquad
\mbox{and}\qquad \R Y'_{1,\R}=S^2\sqcup \RP^2_2.$$

\item $Y''_{1,\R}$ is the real blow-up of $X_{1,\R}$ at a real point and two pairs of
  complex conjugated points. Denoting respectively by $\widetilde E_1,\ldots,\widetilde E_5$ the
  five corresponding exceptional divisors, we have
$$H_2^{-\tau_{Y''_1}}(Y''_{1,\R}; \Z)=\Z c_1(Y''_1)\oplus \Z [\Phi] \oplus \Z
  [\widetilde E_1]\oplus \Z([\widetilde E_2]+ [\widetilde E_3])\oplus
  \Z([\widetilde E_4]+ [\widetilde E_5])$$
and
$$\R Y''_{1,\R}=\RP^2.$$

\item $Y_{2,\R}$ is the real blow-up of $X_{2,\R}$ at a real point
  on a connected component of $\R X_{2,\R}$, and a pair of
  complex conjugated points. Denoting respectively by $\widetilde E_1,\widetilde E_6,\widetilde E_7$ the
  three corresponding exceptional divisors, we have
$$H_2^{-\tau_{Y_2}}(Y_{2,\R}; \Z)=\Z c_1(Y_2)\oplus \Z [\Phi] \oplus \Z [\widetilde E_1]\oplus \Z([\widetilde E_6]+ [\widetilde E_7])\qquad
\mbox{and}\qquad \R Y_{2,\R}=\RP^2\sqcup S^2.$$

\item $Y_{3,\R}$ is the  surgery of  $Y_{2,\R}$  along the class 
$[\Phi]-[\widetilde E_6]-[\widetilde E_7]$.
We have
$$H_2^{-\tau_{Y_3}}(Y_{3,\R}; \Z)=\Z c_1(Y_2)\oplus \Z [\Phi] \oplus \Z [\widetilde E_1]\qquad
\mbox{and}\qquad \R Y_{3,\R}=\RP^2\sqcup 2S^2.$$
Note that $Y_{3,\R}$ is the blow-up of $X_{3,\R}$ at a real point.

\item $Y_{4,\R}$ is the surgery of  $Y_{3,\R}$  along the class 
$c_1(Y_3)-[\Phi]+[\widetilde E_1]$.
We have
$$H_2^{-\tau_{Y_4}}(Y_{4,\R}; \Z)=\Z c_1(Y_4) \oplus \Z [\widetilde E_1]\qquad
\mbox{and}\qquad \R Y_{4,\R}=\RP^2\sqcup 3S^2.$$
Note that $Y_{4,\R}$ is the blow-up  at a real point of an $\R$-minimal real
del~Pezzo surface of degree $2$.

\item $Y_{5,\R}$ is the  surgery of  $Y_{4,\R}$  along the class 
$c_1(Y_4)-[\widetilde E_1]$.
We have
$$H_2^{-\tau_{Y_5}}(Y_{5,\R}; \Z)=\Z c_1(Y_5) \qquad
\mbox{and}\qquad \R Y_{5,\R}=\RP^2\sqcup 4S^2.$$
Note that $Y_{5,\R}$ is the  only (up to deformation) $\R$-minimal real
del~Pezzo surface of degree $1$.
\end{itemize}
Note that $Y''_{1,\R}$ is also the real blow-up of $\RP^2$ at four
pairs of complex conjugated points, and that $Y_{2,\R}$ can also be
constructed as the  surgery of $Y_{1,\R}$ or  $Y'_{1,\R}$
 along the class 
$[\Phi]-[\widetilde E_2]-[\widetilde E_3]$, as well as of $Y''_{1,\R}$ along the class 
$[\Phi]-[\widetilde E_4]-[\widetilde E_5]$. 
Hence we have the following diagram
$$\begin{array}{lllllllll}
\\ & & & & & & Y'_{1,\R} & & 
\\
\\  & & & & & &\left\uparrow\rule{0pt}{0.8cm}\right.
\scriptstyle [\Phi]-[\widetilde E_2]-[\widetilde E_3] & & 
\\
\\ Y_{5,\R} & \xrightarrow[]{c_1(Y_4)-[\widetilde E_1]} &
Y_{4,\R} & \xrightarrow[]{c_1(Y_3)-[\Phi]+[\widetilde E_1]} &
Y_{3,\R} & \xrightarrow[]{[\Phi]-[\widetilde E_6]-[\widetilde E_7]} &
Y_{2,\R} & \xrightarrow[]{[\Phi]-[\widetilde E_2]-[\widetilde E_3]} &
Y_{1,\R} 
\\
\\  & & & & & &\left\downarrow \rule{0pt}{0.8cm}\right.
\scriptstyle{[\Phi]-[\widetilde E_4]-[\widetilde E_5]}  & & 
\\\\ & & & & & & Y''_{1,\R} & & 
\end{array} $$
where $Y_\R \xrightarrow[]{\gamma}  X_\R$ means that $X_\R$ and $Y_\R$ are related
by a surgery along  the class $\gamma$, and that
$\chi(\R Y)=\chi(\R X)+2$.

\begin{lemma}\label{lem:euler char}
Let $Y_\R$ be a real del~Pezzo surface of degree 1, $L$ be a connected
component of $\R Y_\R$, and $\gamma\in
H_2^{-\tau_Y}(Y;\Z)$ be a class realized by an exceptional rational curve.
Then
$$W_{Y_\R,L, [\R Y\setminus L]}(c_1(Y)+\gamma;0)=-\chi(\R Y)+1. $$
\end{lemma}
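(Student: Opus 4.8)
The plan is to interpret the invariant as a signed count of singular members in a real pencil and to convert that signed count into a topological Euler characteristic.

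First I would record the numerics of the class $d=c_1(Y)+\gamma$. Since $\gamma$ is an exceptional class, $\gamma^2=-1$ and $c_1(Y)\cdot\gamma=1$, and since $Y$ has degree $1$ we have $c_1(Y)^2=1$; hence $c_1(Y)\cdot d=c_1(Y)^2+c_1(Y)\cdot\gamma=2$ and $d^2=2$. Thus $c_1(Y)\cdot d+g-1=1$ for $g=0$, so the single point constraint is one real point $q\in L$ (as $s=0$), and the arithmetic genus of $d$ equals $1$. Consequently $W_{Y_\R,L,[\R Y_\R\setminus L]}(d,0)$ counts irreducible rational, i.e.\ one-nodal, curves of class $d$ through $q$ whose real part lies in $L$, each weighted by $(-1)$ to the number of its elliptic real nodes. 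As such a real curve has exactly one node, and that node must be real, the weight is $+1$ when the node has two real branches and $-1$ when it is an isolated real point.

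The central observation is that for such a curve $C$ this weight equals $-\chi(\R C)$: a real node with two real branches gives a figure-eight real part with $\chi(\R C)=-1$ and weight $+1$, while an isolated real node gives a circle together with one isolated point, so $\chi(\R C)=+1$ and weight $-1$. I would then place all curves of class $d$ through $q$ in the pencil they span, a line in $|d|\cong\P^2$, and resolve its base locus to obtain a real genus-$1$ fibration over $\P^1$. Restricting to the real base $\RP^1\cong S^1$ and using additivity of the Euler characteristic over a circle, where $\chi(S^1)=0$ and every smooth real member has $\chi=0$, the sum of $\chi(\R C_t)$ over the singular real members equals the Euler characteristic of the real total space of the relevant part of the fibration. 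Matching signs, the signed count of those members is minus that Euler characteristic.

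The main obstacle is the precise bookkeeping needed to produce exactly $-\chi(\R Y_\R)+1$. Three points must be controlled: (i) the base points of the pencil and the blow-ups resolving them, which shift the Euler characteristic by a controlled constant; (ii) the reducible members of the pencil, most notably those of the form $E_\gamma\cup C_0$ with $C_0$ anticanonical, which contribute to the covering of $\R Y_\R$ by the pencil but are excluded from the enumerative count; and (iii) the constraint $\R C\subset L$, which discards precisely the irreducible members whose isolated node lies outside $L$. I expect (ii)--(iii) to be the delicate step: one must show that the contributions omitted from $W$, relative to the full Euler characteristic sum covering $\R Y_\R$, amount to exactly one unit. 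An alternative and possibly cleaner route is to use the surgery relations of Theorem \ref{thm:main1} along the chain of degree-$1$ del~Pezzo surfaces pictured above: since each surgery changes $\chi(\R Y_\R)$ by $2$, one would check that the right-hand side $-\chi(\R Y_\R)+1$ transforms compatibly with the formula of Theorem \ref{thm:main1}, reducing the statement to a single base computation of the kind carried out in Proposition \ref{prop:conic c1+bC}.
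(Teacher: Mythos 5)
Your engine is the right one---identifying the Welschinger sign of a one-nodal rational member with $-\chi$ of its real part and summing Euler characteristics of real fibers over $\R\P^1$ is exactly the computation the paper delegates to \cite[Proposition 4.7.3]{DK}---and your numerics ($c_1(Y)\cdot d=2$, $d^2=2$, arithmetic genus $1$, one real point constraint, $|d|\cong\P^2$) are correct. But as written the argument has a genuine gap: the bookkeeping in your items (i)--(iii), which you yourself flag as ``the main obstacle,'' is precisely where the content of the lemma lies, and you never carry it out. You would need to classify \emph{all} reducible real members of the pencil (all effective decompositions of $d=c_1(Y)+\gamma$, not only $E_\gamma\cup C_0$ with $C_0$ anticanonical), compute $\chi$ of their real parts, and account for the two base points; nothing in the sketch pins down the final constant $+1$. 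Worse, item (iii) rests on a misreading of the invariant: a solitary (elliptic) node is the image of two $\tau_Y$-conjugate points of the source, hence does \emph{not} lie in $f(\R C)$, so the condition $f(\R C)\subset L$ discards no irreducible member through $q\in L$; and since $F=[\R Y_\R\setminus L]$, the mass $m_{L,F}$ counts \emph{all} elliptic real nodes wherever they sit, the ones outside $L$ being picked up by the term $[f(C^+)]\cdot F$ (this is the paper's Example after the definition of the mass). So the ``discarded contributions'' you planned to reconcile do not exist---which actually simplifies the accounting, but shows the delicate mechanism you identified was the wrong one.

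The paper sidesteps all of this with a one-line reduction you missed: blow down the exceptional curve representing $\gamma$ to obtain a real del~Pezzo surface $Z_\R$ of degree $2$. Since $(c_1(Y)+\gamma)\cdot\gamma=0$, the class $c_1(Y)+\gamma$ is the pullback of $c_1(Z)$, whence $W_{Y_\R,L,[\R Y_\R\setminus L]}(c_1(Y)+\gamma,0)=W_{Z_\R,L,[\R Z_\R\setminus L]}(c_1(Z),0)=-\chi(\R Z_\R)+2$, the last equality being the citable Euler-characteristic computation \cite[Proposition 4.7.3]{DK}; the lemma then follows from $\chi(\R Y_\R)=\chi(\R Z_\R)-1$. (Your pencil on $Y$ is exactly the pullback of the anticanonical pencil on $Z$, so your route, once completed, would reproduce this---but the reduction replaces your hardest step by a reference.) Your alternative route via Theorem \ref{thm:main1} along the surgery chain is also viable in principle: with $d=c_1+\gamma$, adjunction together with $d\cdot[S]=0$ from Lemma \ref{lem:sphere invariant} gives $p_a(d-k[S])=1-k^2$, forcing $k\le 1$, so compatibility with $-\chi+1$ amounts to showing $W_{X_\R,L,F}(d-[S],0)=-1$; but you leave both this sign computation and the base case unproven, so that route too remains a sketch rather than a proof.
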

\begin{proof}
Let us denote by $Z_\R$ be the real del~Pezzo surface of degree 2
obtained by blowing down the exceptional rational curve realizing the
class $\gamma$. Then the class $c_1(Y)+\gamma$ is the pull-back of the
class $c_1(Z)$, and we have
$$W_{Y_\R,L, [\R Y\setminus L]}(c_1(Y)+\gamma;0)=W_{Z_\R,L, [\R
    Z\setminus L]}(c_1(Z);0)=-\chi(\R Z)+2. $$
This latter equality can be proved for example by 
an Euler
characteristic computation as in \cite[Proposition 4.7.3]{DK}.
Now the result follows since $\chi(\R Y)=\chi(\R Z)-1$.
\end{proof}

\begin{prop}
Let $Y_\R$ be a real del~Pezzo surface of degree 1, and $L$ be a connected
component of $\R Y$. Then we have the following Welschinger invariants 
\begin{center}
\begin{tabular}{ |c|c| c|c|c|c|c|c|}
\hline $ Y_\R $  & $Y_{5,\R}$& $Y_{4,\R}$& $Y_{3,\R}$& $Y_{2,\R}$& $Y_{1,\R}$& $Y'_{1,\R}$ &$Y''_{1,\R}$
\\\hline
$W_{Y_\R,L, [\R Y\setminus L]}(2c_1(Y);0)$ &  30 & 18  & 10 &6
& 6 & 6&6 
\\\hline
\end{tabular}
\end{center}
In particular, $W_{Y_\R,L, [\R Y\setminus L]}(2c_1(Y);0)$ does not
depend on the choice of $L$.
\end{prop}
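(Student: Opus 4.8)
The plan is to compute every value by climbing the surgery chain $Y_{1,\R}\to Y_{2,\R}\to\cdots\to Y_{5,\R}$ displayed above, applying Theorem \ref{thm:main1} at each arrow and taking the three surfaces $Y_{1,\R},Y'_{1,\R},Y''_{1,\R}$ (whose real part has Euler characteristic $1$) as base cases, for which $W_{\cdot,L,[\R\,\cdot\,\setminus L]}(2c_1,0)=6$ is known from \cite{Bru14}. First I would fix the common component $L=\RP^2$ and, at each arrow $Y_{n+1,\R}\xrightarrow{[S]}Y_{n,\R}$, apply Theorem \ref{thm:main1} with $d=2c_1$ and $F=[\R Y_{n+1,\R}\setminus L]$, so that
$$W_{Y_{n+1,\R},L,F}(2c_1,0)=W_{Y_{n,\R},L,F}(2c_1,0)+2\sum_{k\ge1}W_{Y_{n,\R},L,F}(2c_1-k[S],0).$$

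The key computational step is to show that this sum collapses to its first term. Since $[S]^2=-2$ and $c_1\cdot[S]=0$ (Lemma \ref{lem:sphere invariant} and the remark following it), the class $D_k=2c_1-k[S]$ satisfies $D_k^2=4-2k^2$, which is negative for $k\ge2$; as $c_1\cdot D_k=2$, the adjunction formula shows an irreducible rational curve in class $D_k$ would carry $D_k^2/2$ nodes, which is impossible for $k\ge2$, so $W_{Y_{n,\R},L,F}(D_k,0)=0$ there. For $k=1$ one has $D_1=c_1+\gamma$ with $\gamma=c_1-[S]$ satisfying $\gamma^2=-1$ and $c_1\cdot\gamma=1$, hence $\gamma$ is realized by a real exceptional curve and Lemma \ref{lem:euler char} evaluates this term as $-\chi(\R Y_{n,\R})+1$, up to the sign discussed next.

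The delicate point — and the step I expect to be the main obstacle — is tracking the class $F$ across the surgery. For the steps $Y_{2,\R}\to Y_{3,\R}\to Y_{4,\R}\to Y_{5,\R}$ the surgery creates a new spherical component, so $[\R Y_{n+1,\R}\setminus L]=[\R Y_{n,\R}\setminus L]+[S]$ in $H_2(X;\Z/2\Z)$, and one must convert the invariants of $Y_{n,\R}$ computed with $F=[\R Y_{n+1,\R}\setminus L]$ into invariants with $F=[\R Y_{n,\R}\setminus L]$, to which Lemma \ref{lem:euler char} applies. By \cite[Proposition 22]{BP14}, as already used in $(\ref{equ:sign W})$, shifting $F$ by $[S]$ multiplies $W(D,0)$ by $(-1)^{D\cdot[S]/2}$: this is trivial on the $k=0$ term (since $2c_1\cdot[S]=0$) but equals $-1$ on the $k=1$ term (since $D_1\cdot[S]=(c_1+\gamma)\cdot[S]=2$). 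Consequently the Euler-characteristic correction enters with a positive sign, yielding the recursion $W_{Y_{n+1,\R},L,[\R Y_{n+1,\R}\setminus L]}(2c_1,0)=W_{Y_{n,\R},L,[\R Y_{n,\R}\setminus L]}(2c_1,0)+2\bigl(\chi(\R Y_{n,\R})-1\bigr)$. I would carefully check the hypotheses of both Theorem \ref{thm:main1} and \cite[Proposition 22]{BP14} (normality of $F$ to $[S]$, and realizability of the relevant classes), relying on Lemmas \ref{lem:sphere invariant} and \ref{lem:2 real class}.

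It then remains to run the recursion. Starting from $W_{Y_{2,\R}}=6$ — which follows from any of the three base cases because each of $\R Y_{1,\R},\R Y'_{1,\R},\R Y''_{1,\R}$ has Euler characteristic $1$, forcing a vanishing correction — and inserting $\chi(\R Y_{n,\R})=2n-1$, one obtains successively $6,\,10,\,18,\,30$ for $n=2,3,4,5$, matching the table. Finally, for the independence of $L$ I would observe that the entire recursion involves only the global quantity $\chi(\R Y_{n,\R})$ and never the distinguished component $L$ (Lemma \ref{lem:euler char} being itself $L$-independent); since the fixed component $L$ remains present and disjoint from every surgery sphere along the chain, running the same argument for an arbitrary choice of $L$ gives the same number, once the base values are known to be $L$-independent. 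Alternatively, this independence can be read off from the reduction of Corollary \ref{cor:dp1 2 comp}.
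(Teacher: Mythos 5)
Your proposal follows essentially the same route as the paper's proof: climb the surgery diagram, apply Theorem \ref{thm:main1} at each arrow, truncate the sum at $k=1$ by the adjunction argument ($2c_1-k[S]$ carries no curves for $k\ge 2$), and evaluate the $k=1$ term via Lemma \ref{lem:euler char}. Your explicit tracking of $F$ across the surgery via \cite[Proposition 22]{BP14} --- the factor $(-1)^{D\cdot[S]/2}$, trivial on the $k=0$ term and equal to $-1$ on the $k=1$ term --- is precisely the step the paper compresses into its displayed relation, and your resulting recursion $W_{Y_{n+1,\R}}=W_{Y_{n,\R}}+2\bigl(\chi(\R Y_{n,\R})-1\bigr)$ reproduces the whole table correctly.

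Two small corrections are in order. First, $Y''_{1,\R}$ is not a base case: the paper says only $Y_{1,\R}$ and $Y'_{1,\R}$ are computed in \cite{Bru14}, and $Y''_{1,\R}$ is among the five cases the proposition itself must treat. This is harmless for your argument, since your own recursion applied to the arrow $Y_{2,\R}\xrightarrow{[\Phi]-[\widetilde E_4]-[\widetilde E_5]} Y''_{1,\R}$, with $\chi(\R Y''_{1,\R})=1$ forcing a vanishing correction, \emph{derives} $W_{Y''_{1,\R}}=W_{Y_{2,\R}}=6$ rather than assuming it --- but you should state it that way instead of citing \cite{Bru14}. Second, your independence-of-$L$ remark is too quick as phrased: when $L$ is a sphere component, the surgeries along the main chain do touch sphere components (each one creates or destroys a sphere), so one must reorder which spheres are surgered and, at the bottom of the chain, use the branch through $Y'_{1,\R}$ (where $L=S^2$ survives and the \cite{Bru14} value is available) rather than $Y_{1,\R}$; the branched diagram in the paper exists precisely to make every choice of $L$ reachable. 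With these two adjustments your argument coincides with the paper's proof, and is in fact more explicit than the paper on the sign bookkeeping.
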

\begin{proof}
The cases  $Y_{1,\R}$ and $Y'_{1,\R}$ have been computed in
\cite{Bru14}\footnote{Note that in the published version of \cite{Bru14},
the number $W_{\widetilde X_{8,1}(4),\overline{L}_1}(4[D]-\sum_{i=1}^8[\widetilde
  E_i]-2[\widetilde E_9])$ is erroneously claimed to be equal to $4$
instead of $6$.}. 
Let $X_\R$ and
$Y_\R$ be two real algebraic surfaces as in the proposition and such
that
$Y_\R \xrightarrow[]{\gamma}  X_\R$.
Suppose that there exists an immersed
 $J$-holomorphic curve $C$ in
$X$ realizing the class $2c_1(X)-k\gamma$.
Since the class $\gamma$ 
satisfies
$$\gamma^2=-2\qquad\mbox{and}\qquad \gamma\cdot c_1(X)=0,$$
it follows from the adjunction formula that $C$ has genus at most
$2-2k^2$. In particular we deduce  that $k=0$ or $1$.
Hence by Theorem \ref{thm:main1} we have
$$W_{X_\R,L, [\R X\setminus L]}(2c_1(X);0)=W_{Y_\R,L, [\R
    Y\setminus L]}(2c_1(Y);0)
-2W_{Y_\R,L, [\R Y\setminus L]}(2c_1(Y)-\gamma;0). $$
Now the result follows from the values of
$W_{Y_{1,\R},L, [\R Y_{1,\R}\setminus L]}(2c_1(Y_1);0)$ and
$W_{Y'_{1,\R},L, [\R Y'_{1,\R}\setminus L]}(2c_1(Y'_1);0)$, and from Lemma \ref{lem:euler char}.
\end{proof}

\bibliographystyle {alpha}
\bibliography {../../Biblio.bib}

\end{document}